\documentclass{amsart}

\usepackage{amsmath,amssymb}
\usepackage{mathtools, bm}
\usepackage{amsthm}
\usepackage[utf8]{inputenc}
\usepackage[T1]{fontenc}
\usepackage[english]{babel}
\usepackage{csquotes}
\usepackage{enumitem}
\usepackage[colorlinks=true]{hyperref}
\usepackage{etoolbox} 

\patchcmd{\subsubsection}{\itshape}{\bfseries}{}{}

\newcommand{\fundef}[5]{\begin{array}[t]{l|rcl}
#1: & #2 & \longrightarrow & #3 \\
    & #4 & \longmapsto & #5 \end{array}}
\newcommand{\ie}{i.e.\ }

\newcommand{\diff}{\mathop{}\mathopen{}\mathrm{d}}
\newcommand{\prob}{\mathrm{Prob}}
\newcommand{\Sp}{\mathbb{S}}
\newcommand{\rmG}{\mathrm{G}}

\newcommand{\R}{\mathbb{R}}

\newcommand{\N}{\mathbb{N}}

\newcommand{\calC}{\mathcal{C}}
\newcommand{\calM}{\mathcal{M}}
\DeclarePairedDelimiter{\abs}{\lvert}{\rvert}
\DeclarePairedDelimiter{\norm}{\lVert}{\rVert}
\newcommand{\indic}{\mathbf{1}}

\DeclareMathOperator*{\tang}{Tan}
\DeclareMathOperator*{\leb}{Leb}

\DeclareMathOperator{\haus}{\mathcal{H}}
\newcommand{\mres}{\mathop{\hbox{\vrule height 6pt width .5pt depth 0pt \vrule height .5pt width 4pt depth 0pt}}\nolimits}
\renewcommand{\epsilon}{\varepsilon}

\numberwithin{equation}{section}

\theoremstyle{plain}
\newtheorem{thm}{Theorem}[section]
\newtheorem{lem}[thm]{Lemma}
\newtheorem{prop}[thm]{Proposition}
\newtheorem{cor}[thm]{Corollary}

\theoremstyle{definition}

\theoremstyle{remark}
\newtheorem{rem}{Remark}[section]

\begin{document}
\title[Equivalence between branched transport models]{On the Lagrangian branched transport model and the equivalence with its Eulerian formulation}
\date\today
\author{Paul Pegon}
\address{Paul Pegon, Laboratoire de Mathématiques d'Orsay, Université Paris-Sud, 91405 Orsay cedex, France}
\email{paul.pegon@math.u-psud.fr}
\keywords{optimal transport, branched transport, Gilbert Energy, rectifiability, Smirnov decomposition}
\subjclass[2010]{49J45, 49Q10, 28A75, 90B20}

\begin{abstract}
First we present two classical models of Branched Transport: the Lagrangian model introduced by Bernot, Caselles, Morel, Maddalena, Solimini \cite{bernot2009optimal, maddalena2003variational}, and the Eulerian model introduced by Xia \cite{xia2003optimal}. An emphasis is put on the Lagrangian model, for which we give a complete proof of existence of minimizers in a --hopefully-- simplified manner. We also treat in detail some $\sigma$-finiteness and rectifiability issues to yield rigorously the energy formula connecting the irrigation cost $\mathbf{I}_\alpha$ to the Gilbert Energy $\mathbf{E}_\alpha$. Our main purpose is to use this energy formula and exploit a Smirnov decomposition of vector flows, which was proved via the Dacorogna-Moser approach in \cite{santambrogio2014dacorogna}, to establish the equivalence between the Lagrangian and Eulerian models, as stated in Theorem \ref{eq_thm}.
\end{abstract}

\maketitle
\tableofcontents

\section*{Introduction}

Branched transport may be seen as an extension of the classical Monge-Kantorovich mass transportation problem (see \cite{OTAM} for a general reference). In this problem, we are given two probability measures $\mu,\nu$ representing the source and the target mass distributions and want to find a map $T$ which sends $\mu$ to $\nu$ in the most economical way. In the original problem from Monge, the cost of moving some mass $m$ along a distance $l$ is proportional to $m \times l$ and each particle moves independently to its destination along a straight line. For example, if one wants to transport a uniform mass on the segment $[-1,1]$ to a mass $2$ located at the point $y = (0,1)$, the mass will travel along straight lines departing from each point in $[-1,1]$ to $y$, hence there are infinitely many transport rays (in fact uncountably many). This is obviously not the most economical way to transport mass if we think for example of ground transportation networks; in this case you do not want to build infinitely many roads but you prefer to build a unique larger road, which ramifies near the source and the destination to collect and dispatch the goods. Hence in accurate models one should expect some branching structure to arise, which we actually observe in the optimal structures for the irrigation costs we deal with here.

This branching behavior may be seen in many supply-demand distribution systems such as road, pipeline or communication networks, but also natural systems like blood vessels, roots or river basins. It is usually due to \enquote{economy of scale} principles, which say, roughly speaking, that building something bigger will cost more, but proportionally less: once you have built the infrastructures, it does not cost much more to increase the traffic along the network. Thus it is in many cases more economically relevant to consider concave costs w.r.t. the mass, for instance costs of the form $m^\alpha \times l$ with $\alpha \in [0,1[$, which are strictly subadditive in $m$ and will force the mass to travel jointly as much as possible. Notice that to model such behavior either in Lagrangian or Eulerian frameworks, one needs to look at the paths actually followed by each particle, and this could not be done via transport maps $T$ or transports plans $\pi \in \Pi(\mu,\nu)$, which only describe how much mass goes from a location $x$ to another location $y$.

We shall present here the two main models of branched transport: the Eulerian model developed by Maddalena, Morel, Solimini \cite{maddalena2003variational}, later studied by Bernot, Caselles, Morel in \cite{bernot2009optimal}, and the Lagrangian model introduced by Xia in \cite{xia2003optimal} as an extension to a discrete model model proposed by Gilbert in \cite{Gil}. In Section \ref{lag_sec} we put an emphasis on the Lagrangian model and give a detailed and simple proof of existence of optimal irrigation plans without resorting to parameterizations as done in \cite{bernot2009optimal}, thus avoiding unnecessary technicalities and measurability issues. Also notice that, in this way, the Lagrangian branched transport model fits the general framework of dynamical transport problems with measures on curves, as in \cite{brenier2011modified} for Incompressible Euler and in \cite{brasco2010congested, conges2008} for traffic congestion. Section \ref{ener_sec} is devoted to a rigorous proof of the energy formula which connects the irrigation cost $\mathbf{I}_\alpha$ to the Gilbert Energy $\mathbf{E}_\alpha$, tackling some issues of $\sigma$-finiteness and rectifiability. In Section \ref{eul_sec} we give a brief description of the discrete model by Gilbert and the continuous extension proposed by Xia. The purpose of the article lies in Section \ref{equiv_sec} which establishes the equivalence between the two models, using the energy formula and a Smirnov decomposition (see \cite{smirnov1993decomposition}) obtained by Santambrogio in \cite{santambrogio2014dacorogna} via a Dacorogna-Moser approach, as stated in Theorem \ref{eq_thm}.

\section{The Lagrangian model: irrigation plans}\label{lag_sec}

\subsection{Notation and general framework}

Let $K$ be a compact subset of $\R^d$. We denote by $\Gamma(K)$ (or simply $\Gamma$) the space of $1$-Lipschitz curves in $K$ parameterized on $\R_+$, embedded with the topology of uniform convergence on compact sets. Recall that it is a compact metrizable space\footnote{One may use the distance $d(\gamma, \gamma') \coloneqq \sup_{n \in \N^\star} \frac{1}{n} \norm*{\gamma - \gamma'}_{L^\infty([0,n])}$.}.

\subsubsection*{Length and stopping time} If $\gamma \in \Gamma$, we define its \emph{stopping time} and its \emph{length} respectively by
\begin{align*}
T(\gamma) &= \inf\{t \geq 0 : \gamma\text{ is constant on } [t, +\infty[\},\\
L(\gamma) &= \int_0^{\infty} \abs*{\dot{\gamma}(t)} \diff t,
\end{align*}
which are valued in $[0, +\infty]$. Since curves are $1$-Lipschitz, $L(\gamma) \leq T(\gamma)$. Moreover, one may prove that $T$ and $L$ are both lower semicontinuous functions and, as such, are Borel. We denote by $\Gamma^1(K)$ the set of curves $\gamma$ with finite length, \ie those satisfying $L(\gamma) < \infty$.

\subsubsection*{Irrigation plans}  We call \emph{irrigation plan} any probability measure $\eta \in \prob({\Gamma})$ satisfying
\begin{equation}\label{fin_len}
\mathbf{L}(\eta) \coloneqq \int_\Gamma L(\gamma) \:\eta(\diff\gamma) < +\infty.
\end{equation}
Notice that any irrigation plan is concentrated on $\Gamma^1(K)$. If $\mu$ and $\nu$ are two probability measures on $K$, one says that $\eta \in \mathrm{IP}(K)$ irrigates $\nu$ from $\mu$ if one recovers the measures $\mu$ and $\nu$ by sending the mass of each curve respectively to its starting point and to its terminating point, which means that
\begin{align*}
(\pi_0)_\#\eta &= \mu,&
(\pi_\infty)_\#\eta &= \nu,
\end{align*}
where $\pi_0(\gamma) = \gamma(0)$, $\pi_\infty(\gamma) = \gamma(\infty) \coloneqq \lim_{t \to +\infty} \gamma(t)$ and $f_\#\eta$ denotes the push-forward of $\eta$ by $f$ whenever $f$ is a Borel map\footnote{Notice that $\lim_{t\to\infty} \gamma(t)$ exists if $\gamma \in \Gamma^1(K)$, and this is all we need since any irrigation plan is concentrated on $\Gamma^1(K)$.}. We denote by $\mathrm{IP}(\mu,\nu)$ the set of irrigation plans irrigating $\nu$ from $\mu$:
\[\mathrm{IP}(\mu,\nu) = \{ \eta \in \mathrm{IP}(K) : (\pi_0)_\#\eta = \mu, (\pi_\infty)_\#\eta = \nu\}.\]

\subsubsection*{Speed normalization} We say that a curve $\gamma \in \Gamma$ is \emph{parameterized by arc length} or \emph{normalized} if it has unit speed up until it stops, \ie $\abs*{\dot{\gamma}(t)} = 1$ for a.e. $t\in [0,T(\gamma)[$. If an irrigation plan $\eta \in \mathrm{IP}(K)$ is such that $\eta$-a.e. curve $\gamma$ is normalized, we say that $\eta$ is itself \emph{parameterized by arc length} or \emph{normalized}. Set $\mathrm{sn} : \gamma \mapsto \tilde{\gamma}$ the map which associates to each curve $\gamma \in \Gamma(K)$ its speed normalization\footnote{One may check that it is Borel.}. If $\eta \in \mathrm{IP}(K)$ is a general irrigation plan one may define its speed normalization as $\tilde{\eta} \coloneqq \mathrm{sn}_\# \eta$ and check that $(\pi_0)_\#\eta = (\pi_0)_\# \tilde{\eta}$ and $(\pi_\infty)_\# \eta = (\pi_\infty)_\# \tilde{\eta}$.

\subsubsection*{Multiplicity} Given an irrigation plan $\eta \in \mathrm{IP}(K)$, let us define the multiplicity $\theta_\eta : K \to [0,\infty]$ as
\[\theta_\eta(x) = \eta(\gamma \in \Gamma(K) : x \in \gamma),\]
which is also denoted by $\abs*{x}_\eta$. It represents the amount of mass which passes at $x$ through curves of $\eta$. We call \emph{domain} of $\eta$ the set $D_\eta$ of points with positive multiplicity (points that are really visited by $\eta$):
\[D_\eta \coloneqq \{ x \in K: \theta_\eta(x) > 0\}.\]

\subsubsection*{Simplicity} If $\gamma \in \Gamma$, we denote by
\[m(x,\gamma) = \#\{t \in [0,T(\gamma)] \cap \R_+ : \gamma(t) = x\} \in \N \cup \{\infty\}\]
the multiplicity of $x$ on $\gamma$, that is the number of times the curve $\gamma$ visits $x$. We call \emph{simple points} of $\gamma \in \Gamma$ those which are visited only once, \ie such that $m(x,\gamma) = 1$ and denote by $S_\gamma$ the set of such points. We say that $\gamma$ is \emph{simple} if $\gamma \setminus S_\gamma = \emptyset$ and \emph{essentially simple} if $\haus^1(\gamma \setminus S_\gamma) = 0$. As usual we extend these definitions to irrigation plans, saying that $\eta$ is \emph{simple} (resp. \emph{essentially simple}) if $\eta$-a.e. curve is simple (resp. essentially simple). Finally we set
\[m_\eta(x) \coloneqq \int_\Gamma m(x,\gamma) \eta(\diff \gamma)\]
which represents the mean number of times curves visit $x$. Notice that
\[\theta_\eta(x) = \int_\Gamma \indic_{x\in \gamma} \eta(\diff \gamma) \leq \int_\Gamma m(x,\gamma) \eta(\diff \gamma) \doteq m_\eta(x)\]
so that $m_\eta(x)$ is in a way the \enquote{full} multiplicity at $x$. 

\subsection{The Lagrangian irrigation problem}

\subsubsection*{Notation} If $\phi : K \to \R_+$ and $\psi : K \to \R^d$ are Borel functions on $K$, we set
\begin{align*}
\int_\gamma \phi(x) \abs*{\diff x} &\coloneqq \int_0^\infty \phi(\gamma(t)) \abs*{\dot{\gamma}(t)} \diff t \shortintertext{and}
\int_\gamma \psi(x) \cdot \diff x &\coloneqq \int_0^\infty \psi(\gamma(t)) \cdot \dot{\gamma}(t) \diff t
\end{align*}
provided $t\mapsto \abs*{\psi(\gamma(t))} \abs*{\dot{\gamma}(t)}$ is integrable.

\subsubsection*{Irrigation costs} For $\alpha \in [0,1]$ we consider the \emph{irrigation cost} $\mathbf{I}_\alpha : \mathrm{IP}(K) \to [0,\infty]$ defined by
\[\mathbf{I}_\alpha(\eta) \coloneqq \int_\Gamma \int_\gamma \abs*{x}_\eta^{\alpha -1} \abs*{\diff x} \eta(\diff \gamma),\]
with the conventions $0^{\alpha -1} = \infty$ if $\alpha <1$, $0^{\alpha -1} = 1$ otherwise, and $\infty \times 0 = 0$. If $\mu,\nu$ are two probability measures on $K$, we want to minimize the cost $\mathbf{I}_\alpha$ on the set of irrigation plans which send $\mu$ to $\nu$, which reads
\begin{equation}\label{lag_pb}\tag{$\text{LI}_\alpha$}
\min_{\eta \in \mathrm{IP}(\mu,\nu)}\quad \int_\Gamma \int_\gamma \abs*{x}_\eta^{\alpha -1} \abs*{\diff x}\eta(\diff \gamma).
\end{equation}
Notice that $\mathbf{I}_\alpha$ is invariant under speed normalization, thus we will often assume without loss of generality that irrigation plans are normalized.

The following result gives a sufficient condition for irrigability with finite cost, and may be found in \cite{bernot2009optimal}.

\begin{prop}[Irrigability]
If $\alpha > 1 - \frac{1}{d}$ then for every pair of measures $(\mu,\nu) \in \prob(K)$, there is an irrigation plan $\eta \in \mathrm{IP}(\mu,\nu)$ of finite $\alpha$-cost, \ie such that $\mathbf{I}_\alpha(\eta) < \infty$.
\end{prop}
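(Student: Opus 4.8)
The plan is to exhibit an explicit finite-cost plan through a dyadic multiscale construction, the threshold $\alpha>1-\frac1d$ being exactly the condition under which the resulting cost series converges.

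\emph{Reductions.} First I would reduce to irrigating an arbitrary measure from a single source. The cost $\mathbf{I}_\alpha$ is invariant under time reversal of curves (reversal preserves images, hence the multiplicities $\abs*{x}_\eta$ and the length integrals), and it is subadditive under concatenation at a common point: given $\eta_1\in\mathrm{IP}(\delta_{x_0},\mu)$ and $\eta_2\in\mathrm{IP}(\delta_{x_0},\nu)$, reversing $\eta_1$ and gluing its curves to those of $\eta_2$ at $x_0$ (through the product coupling, legitimate since $\delta_{x_0}$ is a single point) produces a plan $\eta\in\mathrm{IP}(\mu,\nu)$ with $\mathbf{I}_\alpha(\eta)\le\mathbf{I}_\alpha(\eta_1)+\mathbf{I}_\alpha(\eta_2)$, because along each half of a glued curve the multiplicity of $\eta$ dominates that of the corresponding $\eta_i$ while $\alpha-1\le0$. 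Hence it suffices to produce, for an arbitrary $\mu\in\prob(K)$ and a fixed $x_0\in K$, a finite-cost plan in $\mathrm{IP}(\delta_{x_0},\mu)$; I may also assume $K$ is a cube, so that the straight segments below stay inside it.

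\emph{Construction.} At scale $k$, partition the cube into the $2^{dk}$ dyadic subcubes of side $2^{-k}$; write $Q_k(x)$ for the one containing $x$ and $x_Q$ for the center of a cube $Q$, with the convention $x_{Q_0}:=x_0$. For each $x$ let $\gamma_x$ be the arc-length parameterization of the polygonal path $x_0\to x_{Q_1(x)}\to x_{Q_2(x)}\to\cdots$. Since $\abs*{x_{Q_k(x)}-x_{Q_{k+1}(x)}}\le\sqrt d\,2^{-k}$, this path has length $\le\sum_k\sqrt d\,2^{-k}<\infty$ and converges to $x$, so $\gamma_x\in\Gamma^1(K)$ with $\gamma_x(\infty)=x$. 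Setting $\eta:=(x\mapsto\gamma_x)_\#\mu$, one checks that $x\mapsto\gamma_x$ is Borel, that $(\pi_0)_\#\eta=\delta_{x_0}$ and $(\pi_\infty)_\#\eta=\mu$, and that $\mathbf{L}(\eta)=\int_K L(\gamma_x)\,\mu(\diff x)<\infty$, so that $\eta\in\mathrm{IP}(\delta_{x_0},\mu)$.

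\emph{Cost estimate.} The decisive point is that a whole cube's worth of mass shares each trunk: every curve destined to the child cube $Q_{k+1}(x)$ traverses the segment joining $x_{Q_k(x)}$ to $x_{Q_{k+1}(x)}$, so along that segment $\abs*{x}_\eta\ge\mu(Q_{k+1}(x))$, whence $\abs*{x}_\eta^{\alpha-1}\le\mu(Q_{k+1}(x))^{\alpha-1}$ as $\alpha-1\le0$. Bounding each segment's length by $\sqrt d\,2^{-k}$ and applying Fubini,
\[\mathbf{I}_\alpha(\eta)\le\sqrt d\sum_{k\ge0}2^{-k}\int_K\mu(Q_{k+1}(x))^{\alpha-1}\,\mu(\diff x)=\sqrt d\sum_{k\ge0}2^{-k}\sum_{Q}\mu(Q)^{\alpha},\]
the inner sum running over the $2^{d(k+1)}$ cubes of scale $k+1$. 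By concavity of $t\mapsto t^\alpha$ (Hölder), $\sum_{Q}\mu(Q)^\alpha\le\big(2^{d(k+1)}\big)^{1-\alpha}$, so that $\mathbf{I}_\alpha(\eta)\le C\sum_{k\ge0}2^{\,k(d(1-\alpha)-1)}$ for a constant $C=C(d,\alpha)$; this geometric series converges precisely when $d(1-\alpha)<1$, i.e. $\alpha>1-\frac1d$.

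\emph{Main obstacle.} The heart of the argument — and where the hypothesis is used — is the multiplicity lower bound: the fact that the shared trunks carry the full mass of a dyadic cube is exactly what makes the strictly concave cost cheap, and the balance between the number $2^{dk}$ of trunks, their length $2^{-k}$, and the gain $\mu(Q)^{\alpha-1}$ produces the critical exponent. The remaining points (Borel measurability of $x\mapsto\gamma_x$, the concatenation bookkeeping, and keeping curves inside $K$ when $K$ is not convex) are routine.
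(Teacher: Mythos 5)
Your proof is correct, but note that the paper itself does not prove this proposition: it is stated as a known result with a pointer to \cite{bernot2009optimal}, so there is no internal proof to compare against. What you have written is essentially the classical argument from that reference (and from Xia's Eulerian counterpart in \cite{xia2003optimal}): a dyadic tree rooted at a single source, with the estimate $\mathbf{I}_\alpha(\eta)\le C\sum_k 2^{-k}\sum_Q \mu(Q)^\alpha \le C'\sum_k 2^{k(d(1-\alpha)-1)}$, convergent exactly when $\alpha>1-\frac1d$. The reduction to a single source is sound: time reversal leaves $\theta_\eta$ and the arc-length integrals unchanged, and on each half of a glued curve the multiplicity of the concatenated plan dominates that of the corresponding factor, so $\alpha-1\le 0$ gives the claimed subadditivity. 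The trunk bound $\abs*{y}_\eta\ge\mu(Q_{k+1}(x))$ is also fine, and zero-mass cubes cause no trouble: for $\mu$-a.e.\ $x$ every dyadic cube containing $x$ has positive $\mu$-mass, and cubes of zero mass contribute nothing to the integral under the convention $\infty\times 0=0$. The one place where you are too quick is the final parenthetical: keeping curves inside $K$ is \emph{not} routine for non-convex $K$ --- for a disconnected compact set the statement is simply false, since $\mathrm{IP}(\mu,\nu)$ may be empty. The proposition implicitly assumes $K$ convex (as in \cite{bernot2009optimal}); under that assumption your reduction is easily completed by running the dyadic construction in a cube containing $K$ and composing all curves with the $1$-Lipschitz nearest-point projection onto $K$, which preserves the marginals and does not increase the cost, since it shortens curves and can only increase multiplicities at image points.
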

\begin{rem}
One can actually show that the uniform measure on a unit cube can be irrigated from a unit Dirac mass if and only if $\alpha > 1 - \frac{1}{d}$, hence this condition is necessary for an arbitrary pair $(\mu,\nu)$ to be irrigable with finite $\alpha$-cost, provided for example that $K$ has non-empty interior.
\end{rem}

\subsection{Existence of minimizers}

In this section we prove the necessary lower semicontinuity and compactness results leading to the proof of existence of minimizers by the direct method of calculus of variations. We recall here that, unless stated otherwise, continuity properties on $\Gamma$ relate to the topology of uniform convergence on compact sets and on $\mathrm{IP}(K)$ to the weak-$\star$ topology in the duality with $\calC(K)$.

\subsubsection*{A tightness result} For $C > 0$ we define $\mathrm{IP}_C(K)$ as the set of irrigation plans $\eta$ on $K$ such that
\[\mathbf{T}(\eta) \coloneqq \int_\Gamma T(\gamma) \eta(\diff \gamma) \leq C\]
and $\mathrm{IP}_C(\mu,\nu) \coloneqq \mathrm{IP}_C(K) \cap \mathrm{IP}(\mu,\nu)$. Notice that for a normalized irrigation plan $\eta$ one has
\[\mathbf{T}(\eta) \doteq \int_\Gamma T(\gamma) \eta(\diff\gamma) = \int_\Gamma L(\gamma) \eta(\diff \gamma) \doteq \mathbf{L}(\eta).\]

\begin{rem}\label{IPC_compact_rem}
Since $\mathbf{T}$ is lower semicontinuous on $\prob(\Gamma)$ and $\mathbf{L} \leq \mathbf{T}$, $\mathrm{IP}_C(K)$ is a closed -- thus compact -- subset of $\prob(\Gamma)$.
\end{rem}

The following lemma results directly from Markov's inequality.

\begin{lem}[Tightness]\label{tight_lem}
Given $C > 0$, for all $\eta \in \mathrm{IP}_C(K)$ one has
\[\eta(\gamma : T(\gamma) > M) \leq \frac{C}{M}.\]
\end{lem}
This can be considered as a tightness result because it means that all irrigation plans $\eta \in \mathrm{IP}_C(K)$ are almost concentrated (uniformly) on the sets $\Gamma_M \coloneqq \{\gamma \in \Gamma : T(\gamma) \leq M\}$, which are compact for the uniform topology.

\subsubsection*{Continuity results} When $A$ is a closed subset of $K$, we set
\begin{align*}
[A] &= \{ \gamma \in \Gamma^1 : \gamma \cap A \neq \emptyset\},&
\abs*{A}_\eta &= \eta([A]),
\end{align*}
so that $\abs*{x}_\eta = \abs*{\{x\}}_\eta$. One may show that $[A]$ is Borel. Our first continuity result is that of $\abs*{\cdot}_\eta$ along decreasing sequences of closed sets.

\begin{lem}\label{decr_lem}
If $(A_n)_{n\in\N}$ is a decreasing sequence of closed sets and $A = \bigcap_n^\downarrow A_n$ then
\[\abs*{A}_\eta = \lim_{n\to\infty} \abs*{A_n}_\eta.\]
\end{lem}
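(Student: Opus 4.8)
The plan is to reduce the statement to the continuity from above of the finite measure $\eta$ along the decreasing sequence of sets $\bigl([A_n]\bigr)_{n}$. First I would record that $\bigl([A_n]\bigr)_{n}$ is itself a decreasing sequence of (Borel) subsets of $\Gamma^1$: indeed $A_{n+1}\subseteq A_n$ forces $[A_{n+1}]\subseteq[A_n]$, and likewise $A\subseteq A_n$ gives the trivial inclusion $[A]\subseteq\bigcap_n[A_n]$. Since each $[A_n]$ and $[A]$ are Borel (as already granted) and $\eta$ is a probability measure, once I know
\[ \bigcap_{n}[A_n]=[A], \]
continuity from above of $\eta$ yields $\abs*{A_n}_\eta=\eta([A_n])\to\eta([A])=\abs*{A}_\eta$, which is exactly the claim.

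So the whole matter is the reverse inclusion $\bigcap_n[A_n]\subseteq[A]$, and this is where compactness enters. I would fix $\gamma\in\Gamma^1$ meeting every $A_n$ and, for each $n$, choose a time $t_n$ with $\gamma(t_n)\in A_n$. Viewing $(t_n)$ in the compact space $[0,+\infty]$, I extract a subsequence $t_{n_k}\to t^\star\in[0,+\infty]$. Because $\gamma$ has finite length, $\gamma$ extends continuously to $[0,+\infty]$ by setting $\gamma(+\infty)=\pi_\infty(\gamma)$, so $\gamma(t_{n_k})\to\gamma(t^\star)$. For any fixed $m$ one has $\gamma(t_{n_k})\in A_{n_k}\subseteq A_m$ for $k$ large, and $A_m$ is closed, hence $\gamma(t^\star)\in A_m$; letting $m\to\infty$ gives $\gamma(t^\star)\in\bigcap_m A_m=A$. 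As $\gamma(t^\star)$ is a point of the (extended) image of $\gamma$, this shows $\gamma\cap A\neq\emptyset$, \ie $\gamma\in[A]$.

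The delicate point will be precisely the endpoint at infinity: the contact times $t_n$ may escape to $+\infty$, so the limiting contact point could be $\pi_\infty(\gamma)$ rather than a value $\gamma(t)$ with $t<\infty$. This is exactly why the curves must have finite length — so that $\pi_\infty(\gamma)=\lim_{t\to\infty}\gamma(t)$ exists and the extended curve $[0,+\infty]\to K$ is continuous with compact image — and why $\gamma(+\infty)$ must be counted among the points of $\gamma$ for contact with the closed set $A$ to be detected in the limit; dropping this convention would already break the statement for a single Dirac mass $\eta=\delta_\gamma$ on a curve approaching $A$ without reaching it. With this understanding the compactness argument closes and the proof is complete.
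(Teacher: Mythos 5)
Your proof is correct and is essentially the paper's own argument: both reduce the lemma to the set identity $\bigcap_n [A_n] = [A]$, prove the nontrivial inclusion by choosing contact times $t_n$ with $\gamma(t_n)\in A_n$, extracting a convergent subsequence by compactness, and using that the $A_m$ are closed and decreasing, then conclude by continuity from above of the finite measure $\eta$ (the paper invokes monotone convergence for this last step). The only divergence is implementational rather than conceptual --- the paper reparameterizes $\gamma$ by arc length so that contact times lie in the compact interval $[0,L]$ and the limit point is attained at time $L$, whereas you compactify $[0,+\infty]$ and extend $\gamma$ continuously by $\gamma(+\infty)=\pi_\infty(\gamma)$; your explicit remark that $\gamma(\infty)$ must count among the points of $\gamma$ (on pain of the Dirac counterexample you describe) makes precise a convention that the paper's normalization step, \enquote{membership in $[B]$ only depends on the trajectory}, uses only implicitly.
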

\begin{proof}
Let us prove that $[A] = \bigcap_n^\downarrow [A_n]$. The inclusion $[A] \subseteq \bigcap_n [A_n]$ is clear since $[A] \subseteq[A_n]$ for all $n$. Let us take $\gamma \in \bigcap_n [A_n]$. Since belonging to some $[B]$ only depends on the trajectory of $\gamma$, we may assume that it is parameterized by arc length. Because $\gamma$ has finite length $L$, there is a sequence $(A_n)_n$ and a sequence $(t_n)_n \in [0,L]$ such that $\gamma(t_n) \in A_n$ for all $n$. One may extract a converging subsequence, still denoted $(t_n)_n$, such that $t_n \xrightarrow{n\to\infty} t \in [0,L]$. Since the $(A_n)$'s are decreasing closed sets, $\gamma(t)$ belongs to their intersection $A$, hence $\gamma \in [A]$. By the monotone convergence theorem
\[\lim_n \abs*{A_n}_\eta \doteq \lim_n \eta([A_n]) = \eta([A]) = \abs*{A}_\eta.\]
\end{proof}

\begin{prop}
For $C> 0$, the map
\[\fundef{\theta}{K \times \mathrm{IP}_C(K)}{[0,1]}{(x,\eta)}{\abs*{x}_\eta}\]
is upper semicontinuous.
\end{prop}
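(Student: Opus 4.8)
The plan is to establish the $\limsup$-inequality characterizing upper semicontinuity along sequences, which suffices because $K$ is compact metric and $\prob(\Gamma)$ is metrizable for the weak-$\star$ topology (as $\Gamma$ is compact metrizable by assumption), so that $K \times \mathrm{IP}_C(K)$ is metrizable. Thus I fix a sequence $(x_n,\eta_n) \to (x,\eta)$ in $K \times \mathrm{IP}_C(K)$ and aim to show
\[
\limsup_{n\to\infty} \abs*{x_n}_{\eta_n} \leq \abs*{x}_\eta .
\]
The first, conceptual, reduction is to approximate the singleton $\{x\}$ from outside by closed balls: since $\overline{B}(x,1/k) \downarrow \{x\}$ is a decreasing sequence of closed sets, Lemma \ref{decr_lem} gives $\abs*{x}_\eta = \lim_{k} \abs*{\overline{B}(x,1/k)}_\eta$. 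This is exactly the direction needed for upper semicontinuity, since the point multiplicity is recovered as a decreasing limit of ball multiplicities. So, given $\epsilon > 0$, I first choose $r > 0$ with $\abs*{\overline{B}(x,r)}_\eta \leq \abs*{x}_\eta + \epsilon$, and set $A \coloneqq \overline{B}(x,r)$.

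Next comes the set-inclusion step in the moving variable $x_n$. Because $x_n \to x$, for $n$ large we have $x_n \in A$, whence any curve through $x_n$ meets $A$, i.e. $[\{x_n\}] \subseteq [A]$, and therefore $\abs*{x_n}_{\eta_n} = \eta_n([\{x_n\}]) \leq \eta_n([A])$. It remains to control $\limsup_n \eta_n([A])$ by $\abs*{A}_\eta = \eta([A])$. The natural tool is the Portmanteau inequality $\limsup_n \eta_n(F) \leq \eta(F)$ for closed $F$, but here lies \emph{the main obstacle}: the set $[A]$ is \emph{not} closed in $\Gamma$ for the topology of uniform convergence on compact sets, since a sequence of curves hitting $A$ at times $t_n \to +\infty$ may converge to a curve that never reaches $A$.

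To overcome this I intersect with the compact sublevel sets of the stopping time and use the tightness of $\mathrm{IP}_C(K)$ to discard the tail uniformly. The key sublemma I would prove is that $[A] \cap \Gamma_M$ is closed: on $\Gamma_M = \{T \leq M\}$ (closed since $T$ is lower semicontinuous, and compact) every curve is constant past time $M$, so a curve meets $A$ iff it does so at some $t \in [0,M]$; given $\gamma_n \in [A]\cap\Gamma_M$ with hitting times $t_n \in [0,M]$ and $\gamma_n \to \gamma$, one extracts $t_n \to t \in [0,M]$ and passes to the limit using uniform convergence on $[0,M]$ to get $\gamma(t) \in A$. Then I split, for each $M$,
\[
\eta_n([A]) \leq \eta_n\bigl([A]\cap\Gamma_M\bigr) + \eta_n\bigl(\{\gamma : T(\gamma) > M\}\bigr) \leq \eta_n\bigl([A]\cap\Gamma_M\bigr) + \frac{C}{M},
\]
where the last bound is Lemma \ref{tight_lem}, valid uniformly in $n$. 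Applying Portmanteau to the closed set $[A]\cap\Gamma_M \subseteq [A]$ yields $\limsup_n \eta_n([A]) \leq \eta([A]) + C/M$, and letting $M \to \infty$ gives $\limsup_n \eta_n([A]) \leq \eta([A]) = \abs*{A}_\eta$. Chaining these inequalities produces $\limsup_n \abs*{x_n}_{\eta_n} \leq \abs*{x}_\eta + \epsilon$, and since $\epsilon$ was arbitrary the claim follows. The only delicate points to write carefully are the closedness of $[A]\cap\Gamma_M$ (the hitting-time extraction) and the uniform-in-$n$ tail estimate; the rest is a routine diagonal passage through the limits in $n$, then $M$, then $r$.
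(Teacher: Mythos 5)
Your proof is correct and follows essentially the same route as the paper's: bound the point multiplicity by a closed-ball multiplicity, split off the tail $\{T > M\}$ using the tightness Lemma \ref{tight_lem}, apply the Portmanteau inequality to the closed set $[A]\cap\Gamma_M$, and conclude via Lemma \ref{decr_lem}. The only differences are cosmetic: you fix the ball radius at the start (via $\epsilon$) where the paper sends the radius to $0$ last, and you spell out the closedness of $[A]\cap\Gamma_M$ (the hitting-time extraction), a step the paper merely asserts.
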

\begin{proof}
Given $x_n \to x$ and $\eta_n \rightharpoonup \eta$, take $\epsilon > 0$. If $n$ is large enough, $x_n \in \bar{B}(x, \epsilon)$, hence $\limsup_n \abs*{x_n}_{\eta_n} \leq \limsup_n \abs*{\bar{B}(x,\epsilon)}_{\eta_n}$. Besides, using Lemma \ref{tight_lem} one gets
\begin{align*}
\abs*{\bar{B}(x,\epsilon)}_{\eta_n} &\leq \eta_n(\{T> M\}) + \eta_n(\{T \leq M\}\cap [\bar{B}(x,\epsilon)])\\
&\leq C/M + \eta_n(A)
\end{align*}
where we set $A \coloneqq \{T\leq M\} \cap [\bar{B}(x,\epsilon)]$. It is easy to check $A$ is closed since $T$ is lsc and the ball is closed. Hence passing to the $\limsup$ in $n$ yields
\begin{equation}
\limsup_n \abs*{\bar{B}(x,\epsilon)}_{\eta_n} \leq C/M + \eta(A) \leq C/M + \abs*{\bar{B}(x,\epsilon)}_\eta.
\end{equation}
Taking $M\to\infty$, one gets
\[\limsup_n \abs*{x_n}_{\eta_n} \leq \limsup_n \abs*{\bar{B}(x,\epsilon)}_{\eta_n} \leq \abs*{\bar{B}(x,\epsilon)}_\eta,\]
then we pass to the limit $\epsilon \to 0$ using Lemma \ref{decr_lem}:
\[\limsup_n \abs*{x_n}_{\eta_n} \leq \lim_{\epsilon \to 0} \abs*{\bar{B}(x,\epsilon)}_\eta = \abs*{x}_\eta.\]
\end{proof}

For any $\eta \in \mathrm{IP}(K)$, we define the $\alpha$-cost of a curve $\gamma \in \Gamma$ w.r.t. $\eta$ by
\[Z_{\alpha,\eta}(\gamma) = \int_\gamma \theta_\eta^{\alpha -1}(x) \abs*{\diff x}\]
and we set $\fundef{Z_\alpha}{\Gamma\times\mathrm{IP(K)}}{\R}{(\gamma,\eta)}{Z_{\alpha,\eta}(\gamma)}$.

\begin{prop}\label{landscape_lsc}
For any $C>0$, the function $Z_\alpha$ is lower semicontinuous on $\Gamma \times \mathrm{IP}_C$.
\end{prop}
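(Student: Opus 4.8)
The plan is to realize $Z_\alpha$ as a supremum of lower semicontinuous functionals indexed by finite time-partitions, so that lower semicontinuity comes for free and the entire difficulty collapses onto a single pointwise identity. First I would record that the integrand is itself jointly lower semicontinuous. Writing $c_\alpha(x,\eta) \coloneqq \theta_\eta(x)^{\alpha-1} = \abs*{x}_\eta^{\alpha-1}$, observe that $c_\alpha$ takes values in $[1,+\infty]$ (since $\abs*{x}_\eta \in [0,1]$ and $\alpha-1 \le 0$) and that the scalar map $s \mapsto s^{\alpha-1}$ is non-increasing and lower semicontinuous on $[0,1]$ with the convention $0^{\alpha-1} = +\infty$. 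Composing this with the upper semicontinuous map $(x,\eta) \mapsto \abs*{x}_\eta$ from the preceding proposition — here the restriction to $\mathrm{IP}_C$ is essential, as that is exactly where multiplicity is upper semicontinuous — yields that $c_\alpha$ is lower semicontinuous on $K \times \mathrm{IP}_C$.

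Next, for each finite partition $P = \{0 = t_0 < t_1 < \dots < t_N\}$ of $\R_+$ I would set
\[ G_P(\gamma,\eta) \coloneqq \sum_{i=1}^N \Big(\inf_{t \in [t_{i-1},t_i]} c_\alpha(\gamma(t),\eta)\Big)\,\abs*{\gamma(t_i) - \gamma(t_{i-1})}, \]
and argue that each $G_P$ is lower semicontinuous on $\Gamma \times \mathrm{IP}_C$. The chord length $(\gamma,\eta) \mapsto \abs*{\gamma(t_i) - \gamma(t_{i-1})}$ is continuous and non-negative. The factor $(\gamma,\eta) \mapsto \inf_{t \in [t_{i-1},t_i]} c_\alpha(\gamma(t),\eta)$ is lower semicontinuous: given $(\gamma_n,\eta_n) \to (\gamma,\eta)$, I would pick near-minimizers $t_n \in [t_{i-1},t_i]$, extract $t_n \to t^\star$, note that $\gamma_n(t_n) \to \gamma(t^\star)$ by uniform convergence on the compact interval, and invoke lower semicontinuity of $c_\alpha$. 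A product of a non-negative continuous factor by a non-negative lower semicontinuous factor valued in $[1,+\infty]$ is lower semicontinuous, using the convention $\infty\times 0 = 0$ already in force for these costs, and a finite sum of such terms stays lower semicontinuous.

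The crux is then the pointwise identity $Z_\alpha(\gamma,\eta) = \sup_P G_P(\gamma,\eta)$ at each fixed $(\gamma,\eta)$, the supremum ranging over all finite partitions. The inequality $G_P \le Z_\alpha$ is immediate, since $\inf_{[t_{i-1},t_i]} c_\alpha(\gamma(\cdot),\eta) \le c_\alpha(\gamma(t),\eta)$ on $[t_{i-1},t_i]$ and $\abs*{\gamma(t_i)-\gamma(t_{i-1})} \le \int_{t_{i-1}}^{t_i}\abs*{\dot\gamma}$. The reverse inequality is where the work lies and is the main obstacle. For a fixed continuous weight $w = c_\alpha(\cdot,\eta)$ I would first let the mesh tend to $0$: uniform continuity lets one replace $\inf_{[t_{i-1},t_i]} w\circ\gamma$ by $w(\gamma(t_{i-1}))$ up to a vanishing error, while $\sum_i \abs*{\gamma(t_i)-\gamma(t_{i-1})} \to L(\gamma)$ controls the gap between chords and arc length, so the resulting Riemann-type sums converge to $\int_\gamma w\,\abs*{\diff x}$. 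For the genuinely lower semicontinuous weight $c_\alpha(\cdot,\eta)$, I would approximate it from below by an increasing sequence of continuous (indeed Lipschitz) weights via inf-convolution in the space variable — legitimate because $c_\alpha(\cdot,\eta) \ge 1$ is bounded below — and pass to the limit by monotone convergence, exploiting the monotonicity of $G_P$ in the weight; this gives $\sup_P G_P \ge Z_\alpha$, hence equality.

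Finally I would conclude: $Z_\alpha = \sup_P G_P$ is a supremum of lower semicontinuous functions on $\Gamma \times \mathrm{IP}_C$, therefore lower semicontinuous. The one genuinely delicate point is the $\ge$ direction of the pointwise identity, where one must ensure that the two independent approximations — replacing $c_\alpha$ by continuous weights, and replacing chords by arc length under mesh refinement — interact correctly; everything else is a matter of assembling standard facts about lower semicontinuity of infima over compacta and of suprema of families.
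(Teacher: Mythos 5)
Your proposal is correct, but it takes a genuinely different route from the paper's proof. The paper argues directly on sequences: fixing $\gamma_n \to \gamma$ and $\eta_n \rightharpoonup \eta$, it truncates time to $[0,T(\gamma)-\epsilon]$ (using only lower semicontinuity of $T$), first assumes the weight $f = \theta^{\alpha-1}$ is continuous so that $f(\gamma_n(\cdot),\eta_n) \to f(\gamma(\cdot),\eta)$ uniformly, and then handles the speed factor by extracting a weak-$\star$ limit $\abs*{\dot{\gamma}_{n_k}} \xrightharpoonup{L^\infty} u$ together with the classical inequality $\abs*{\dot{\gamma}} \leq u$ a.e.; finally it removes the continuity assumption by monotone approximation $f_k \uparrow f$. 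You instead write $Z_\alpha = \sup_P G_P$ as a supremum of weighted chord sums over finite partitions, each $G_P$ being lower semicontinuous by elementary means (infimum of an lsc function over a compact time interval, products and finite sums of nonnegative lsc and continuous factors), so that lower semicontinuity of $Z_\alpha$ is automatic once the pointwise representation identity is proved; that identity you establish exactly as for the classical length functional, and your decoupling of the two approximations is the right one — mesh refinement must be done at fixed \emph{continuous} weight, since the Riemann-sum error estimate uses $\norm*{w}_\infty < \infty$ (finite on the compact $K$ only for continuous weights), and only afterwards does one pass to the lsc weight by monotone approximation from below, using monotonicity of $G_P$ in the weight and the monotone convergence theorem. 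The two proofs thus share the step \enquote{lsc weight $=$ increasing limit of continuous weights}, but differ in how they treat the metric part: the paper needs weak-$\star$ compactness in $L^\infty$, the truncation trick at $T(\gamma)-\epsilon$, and some subsequence bookkeeping, whereas your chord-sum device generalizes the standard proof that length is lower semicontinuous and makes the curve dependence manifestly lsc, at the price of verifying the representation identity (including the easy but necessary check, which your framework handles uniformly, that both sides are $+\infty$ on curves of infinite length, since $c_\alpha \geq 1$). Your argument is arguably more elementary and self-contained; the paper's is more in line with standard lower semicontinuity proofs for integral functionals and avoids introducing the auxiliary family $G_P$.
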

\begin{proof}
The case $\alpha = 1$ is clear since $Z_1(\gamma,\eta) = L(\gamma)$ hence we assume $\alpha < 1$. We know that the map $f : (x, \eta) \mapsto \abs*{x}_\eta^{\alpha-1}$ is lsc on $K \times \mathrm{IP_C}$ since $\theta : (x,\eta) \mapsto \abs*{x}_\eta$ is usc and $\alpha -1 < 0$. Now take $\gamma_n \to \gamma$ and $\eta_n \rightharpoonup \eta$, then since $T$ is lsc, for $\epsilon >0$ and $n$ large enough we have $T(\gamma) \leq T(\gamma_n) + \epsilon$, which implies that
\begin{equation}\label{landscape_lsc_eq1}
\int_0^{T(\gamma_n)} f(\gamma_n(t), \eta_n) \abs*{\dot{\gamma}_n(t)} \diff t \geq \int_0^{T(\gamma)-\epsilon} f(\gamma_n(t), \eta_n) \abs*{\dot{\gamma}_n(t)}\diff t.
\end{equation}
Suppose for a moment that $f$ is continuous on $K \times \mathrm{IP}_C$ which is a compact metric space, hence it is uniformly continuous. Since $\gamma_n$ converges uniformly to $\gamma$ on $[0,T(\gamma)-\epsilon)]$, the function $g_n : t \mapsto f(\gamma_n(t), \eta_n)$ converges uniformly to $g : t \mapsto f(\gamma(t), \eta)$ on $[0,T(\gamma) - \epsilon]$. Now we have to take care of the $\abs*{\dot{\gamma}_n(t)}$ factor. Since the sequence $(\dot{\gamma}_n)_n$ is bounded in $L^\infty([0,T(\gamma)-\epsilon])$ one may extract a subsequence $(\dot{\gamma}_{n_k})_k$ such that $\dot{\gamma}_{n_k} \xrightharpoonup{L^\infty} \dot{\gamma}$ and $\abs*{\dot{\gamma}_{n_k}} \xrightharpoonup{L^\infty} u$. It is a classical result that $\abs*{\dot{\gamma}(t)} \leq u(t)$ almost everywhere on $[0,T(\gamma) - \epsilon]$. Denoting by $\langle\cdot,\cdot\rangle$ the duality bracket $L^1-L^\infty$ on $[0,T(\gamma)-\epsilon]$, we have
\begin{align*}
\int_0^{T(\gamma)-\epsilon} f(\gamma_{n_k}(t), \eta_{n_k}) \abs*{\dot{\gamma}_{n_k}(t)} \diff t \doteq \langle g_{n_k}&, \abs*{\dot{\gamma}_{n_k}} \rangle\\
\xrightarrow{k \to \infty} \; \langle g&, u\rangle \geq \int_0^{T(\gamma) - \epsilon} f(\gamma(t), \eta) \abs*{\dot{\gamma}(t)} \diff t
\end{align*}
since $g_{n_k} \to g$ uniformly hence strongly in $L^1$. Prior to extracting the subsequence $(\dot{\gamma}_{n_k})_k$ we could have taken first a subsequence of $\dot{\gamma}_n$ such that the left hand-side converged to $\liminf_n \int_0^{T(\gamma) - \epsilon} f(\gamma_n(t),\eta_n) \abs*{\dot{\gamma}_n(t)} \diff t$. Thus we actually have
\[\liminf_n \int_0^{T(\gamma)-\epsilon} f(\gamma_n(t), \eta_n) \abs*{\dot{\gamma}_n(t)}\diff t \geq \int_0^{T(\gamma)-\epsilon} f(\gamma(t),\eta) \abs*{\dot{\gamma}(t)} \diff t.\]
Finally, we use this inequality together with \eqref{landscape_lsc_eq1} and pass to the limit $\epsilon \to 0$ thanks to the monotone convergence theorem, which yields
\begin{equation}\label{landscape_lsc_eq2}
\liminf_n  \int_0^{T(\gamma_n)} f(\gamma_n(t),\eta_n) \abs*{\dot{\gamma}_n(t)} \diff t
\geq \int_0^{T(\gamma)} f(\gamma(t),\eta) \abs*{\dot{\gamma}(t)}\diff t.
\end{equation}
In general $f$ is not continuous but only lsc, but \eqref{landscape_lsc_eq2} still holds for our function $f$ by considering an increasing sequence of continuous functions $f_k \uparrow f$, writing the inequality with $f_k$ and using the monotone convergence theorem as $k\to \infty$. We have therefore proven that
\[\liminf_n Z_\alpha(\gamma_n,\eta_n) \geq Z_\alpha(\gamma,\eta)\]
hence $Z_\alpha$ is lsc on $\Gamma \times \mathrm{IP}_C(K)$.
\end{proof}

Notice that our cost $\mathbf{I}_\alpha$ may be written as
\[\mathbf{I}_\alpha(\eta) = \int_\Gamma Z_\alpha(\gamma,\eta) \eta(\diff \gamma),\]
hence its lower semicontinuity on $\mathrm{IP}_C$ will be obtained as a corollary to the following lemma.

\begin{lem}\label{general_continuity}
Let $X$ be a closed subset of $\prob(\Gamma)$.
\begin{enumerate}[label=(\roman*)]
\item If $f : \Gamma \times X \to \R$ is continuous, then the functional $F : \eta \mapsto \int_\Gamma f(\gamma,\eta) \, \eta(d\gamma)$ is continuous on $X$.
\item If $f : \Gamma \times X \to [0,+\infty]$ is lsc, then $F : \eta \mapsto \int_\Gamma f(\gamma, \eta) \, \eta(d\gamma)$ is lsc on $X$.
\end{enumerate}
\end{lem}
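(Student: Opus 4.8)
The plan is to establish (i) directly and then bootstrap (ii) from it by approximating $f$ from below by continuous functions. Throughout I would exploit that $\Gamma$ is compact metrizable and that $X$, being a closed subset of the compact metrizable space $\prob(\Gamma)$, is itself compact metrizable; hence $\Gamma \times X$ is a compact metric space, so that any continuous function on it is automatically bounded and uniformly continuous.

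For (i), I fix a sequence $\eta_n \to \eta$ in $X$ and split the difference with the triangle inequality:
\[
\abs{F(\eta_n)-F(\eta)} \leq \int_\Gamma \abs{f(\gamma,\eta_n)-f(\gamma,\eta)}\,\eta_n(\diff\gamma) + \abs*{\int_\Gamma f(\gamma,\eta)\,\eta_n(\diff\gamma) - \int_\Gamma f(\gamma,\eta)\,\eta(\diff\gamma)}.
\]
The second term tends to $0$ because $\gamma \mapsto f(\gamma,\eta)$ is a fixed bounded continuous test function and $\eta_n \rightharpoonup \eta$. For the first term I would invoke uniform continuity of $f$ on $\Gamma \times X$: given $\epsilon > 0$ there is $\delta > 0$ such that $d_X(\eta',\eta'') < \delta$ forces $\abs{f(\gamma,\eta')-f(\gamma,\eta'')} < \epsilon$ for \emph{every} $\gamma$; hence for $n$ large the first integrand is uniformly $< \epsilon$, and its integral against the probability measure $\eta_n$ is $< \epsilon$. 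This yields continuity of $F$.

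For (ii), I reduce to (i) by writing $f$ as an increasing limit of continuous functions. Assuming $f \not\equiv +\infty$ (the opposite case being trivial), the Moreau–Yosida regularizations $f_k(\gamma,\eta) \coloneqq \inf_{(\gamma',\eta')}\bigl[f(\gamma',\eta') + k\,d\bigl((\gamma,\eta),(\gamma',\eta')\bigr)\bigr]$ are nonnegative, finite, $k$-Lipschitz (hence continuous, and bounded since $\Gamma \times X$ is compact), and they increase pointwise to $f$ because $f$ is lsc and bounded below. By part (i) each $F_k \colon \eta \mapsto \int_\Gamma f_k(\gamma,\eta)\,\eta(\diff\gamma)$ is continuous. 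Since $f_k \uparrow f$, the monotone convergence theorem gives $F_k(\eta) \uparrow F(\eta)$ for every fixed $\eta$, so $F = \sup_k F_k$ is a pointwise supremum of continuous functions and is therefore lsc.

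The delicate point is precisely the \emph{joint} dependence of the integrand and of the integrating measure on $\eta$: one cannot directly apply the definition of weak-$\star$ convergence, since the test function $f(\cdot,\eta_n)$ itself moves with $n$. The decomposition above isolates this difficulty, which is then resolved by the compactness of $\Gamma \times X$, upgrading continuity to uniform continuity and thus letting the moving integrand be controlled uniformly in $\gamma$. The approximation step in (ii) merely transfers this control from continuous to lower semicontinuous integrands, so I expect no further obstacle there.
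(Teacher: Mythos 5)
Your proof is correct and follows essentially the same route as the paper's: part (i) uses compactness of $\Gamma \times X$ and Heine's theorem to get uniform continuity, so that the moving integrand $f(\cdot,\eta_n)$ converges uniformly and can be paired against the weak-$\star$ convergent measures (your triangle-inequality splitting is precisely the standard proof of this strong--weak pairing); part (ii) approximates $f$ from below by continuous functions and applies the monotone convergence theorem, exactly as in the paper, with the only difference that you make the approximating sequence explicit via Moreau--Yosida regularization where the paper simply cites the standard fact.
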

\begin{proof}
Let us prove the first item. Take $\eta_n \rightharpoonup \eta$. Since $X$ is a compact metrizable space, so is $\Gamma\times X$ and by Heine's theorem $f$ is uniformly continuous on $\Gamma \times X$. Setting $g_n(\gamma) = f(\gamma, \eta_n)$ and $g(\gamma) = f(\gamma, \eta)$, this implies that $g_n \to g$ strongly in $\calC(\Gamma)$. Since $\eta_n \rightharpoonup \eta$ weakly-$\star$, we have $\int_\Gamma f(\gamma, \eta_n) \, \eta_n(d\gamma) = \langle \eta_n, g_n \rangle \to \langle \eta, g \rangle = \int_\Gamma f(\gamma, \eta) \, \eta(d\gamma)$ where $\langle \cdot, \cdot \rangle$ denotes the $\calC(\Gamma) - \calM(\Gamma)$ duality bracket.

The second item is a straightforward consequence of the fact that $f$ can be written as the increasing limit of continuous functions $f_k$, and of the monotone convergence theorem.
\end{proof}

\begin{cor}\label{lag_lsc}
For $\alpha\in[0,1]$, the functional $\mathbf{I}_\alpha$ is lower semincontinus on $\mathrm{IP}_C(K)$.
\end{cor}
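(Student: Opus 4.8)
The plan is to recognize that $\mathbf{I}_\alpha$ is exactly a functional of the type treated by Lemma~\ref{general_continuity}, and then to feed it the two preceding results. As observed just before the statement, one has
\[\mathbf{I}_\alpha(\eta) = \int_\Gamma Z_\alpha(\gamma,\eta)\,\eta(\diff\gamma),\]
which is precisely $F(\eta)$ for the choice $f = Z_\alpha$ and $X = \mathrm{IP}_C(K)$. So the corollary should reduce to checking that this choice of $f$ and $X$ meets the hypotheses of item (ii) of the lemma.

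First I would check that $X = \mathrm{IP}_C(K)$ is a closed subset of $\prob(\Gamma)$, which is exactly the content of Remark~\ref{IPC_compact_rem}; hence the abstract framework of Lemma~\ref{general_continuity} applies with this $X$. Next I would verify the two conditions on $f = Z_\alpha$. Its lower semicontinuity on $\Gamma \times \mathrm{IP}_C$ is Proposition~\ref{landscape_lsc}. The only point needing a word is that item (ii) asks for an $[0,+\infty]$-valued integrand rather than a merely real-valued one: this is immediate from the definition $Z_{\alpha,\eta}(\gamma) = \int_\gamma \theta_\eta^{\alpha-1}(x)\,\abs*{\diff x}$, since with the stated conventions the integrand $\theta_\eta^{\alpha-1}$ is nonnegative and the line measure $\abs*{\diff x}$ is nonnegative, so $Z_\alpha \geq 0$ with values in $[0,+\infty]$.

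With both hypotheses in hand, applying Lemma~\ref{general_continuity}(ii) directly yields that $\mathbf{I}_\alpha = F$ is lower semicontinuous on $\mathrm{IP}_C(K)$, which is the claim. I do not expect any genuine obstacle here: all the real analytic difficulty—the joint lower semicontinuity of $Z_\alpha$, obtained from the upper semicontinuity of the multiplicity $\theta$ together with the weak-$L^\infty$ control of the speeds $\abs*{\dot{\gamma}_n}$—has already been carried out in Proposition~\ref{landscape_lsc}, and the corollary is only a matter of assembling these ingredients through the abstract lemma.
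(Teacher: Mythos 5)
Your proposal is correct and follows exactly the paper's own route: write $\mathbf{I}_\alpha(\eta) = \int_\Gamma Z_\alpha(\gamma,\eta)\,\eta(\diff\gamma)$, invoke Remark~\ref{IPC_compact_rem} for the closedness of $\mathrm{IP}_C(K)$ and Proposition~\ref{landscape_lsc} for the lower semicontinuity of $Z_\alpha$, and conclude by Lemma~\ref{general_continuity}(ii). Your explicit check that $Z_\alpha$ is $[0,+\infty]$-valued is a point the paper leaves implicit, but it is the same argument.
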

\begin{proof}
We know by Remark \ref{IPC_compact_rem} that $\mathrm{IP}_C(K)$ is a closed subset of $\prob(\Gamma)$ which gives the result by the previous lemma applied to the function $f = Z_\alpha$ defined on $\mathrm{IP}_C(K) \times \Gamma$.
\end{proof}

Finally, we are left to investigate the continuity of the maps $\boldsymbol{\pi}_{\boldsymbol{0}} : \eta \mapsto (\pi_0)_\# \eta$ and $\boldsymbol{\pi}_{\boldsymbol{\infty}} : \eta \mapsto (\pi_\infty)_\#\eta$ on $\mathrm{IP}(K)$.
\begin{prop}\label{marg_cont}
The maps $\boldsymbol{\pi}_{\boldsymbol{0}}, \boldsymbol{\pi}_{\boldsymbol{\infty}} : \mathrm{IP}_C(K) \to \prob(K)$ are continuous\footnote{Recall that we always endow spaces of measures with their weak-$\star$ topology.}. In particular $\mathrm{IP}_C(\mu,\nu)$ is closed.
\end{prop}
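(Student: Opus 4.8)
The plan is to treat the two maps separately, since $\boldsymbol{\pi}_{\boldsymbol{0}}$ is elementary while $\boldsymbol{\pi}_{\boldsymbol{\infty}}$ concentrates the whole difficulty. For $\boldsymbol{\pi}_{\boldsymbol{0}}$ I would note that the evaluation $\pi_0 : \gamma \mapsto \gamma(0)$ is continuous on $\Gamma$, since uniform convergence on $[0,1]$ already forces convergence at $t = 0$. Thus for every $f \in \calC(K)$ one has $f \circ \pi_0 \in \calC(\Gamma)$, and if $\eta_n \rightharpoonup \eta$ then
\[ \int_K f \, \diff(\pi_0)_\#\eta_n = \int_\Gamma f(\gamma(0)) \, \eta_n(\diff\gamma) \longrightarrow \int_\Gamma f(\gamma(0)) \, \eta(\diff\gamma) = \int_K f \, \diff(\pi_0)_\#\eta, \]
which is precisely $(\pi_0)_\#\eta_n \rightharpoonup (\pi_0)_\#\eta$. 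This uses no bound on $\mathbf{T}$, so $\boldsymbol{\pi}_{\boldsymbol{0}}$ is in fact continuous on all of $\mathrm{IP}(K)$.

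The map $\boldsymbol{\pi}_{\boldsymbol{\infty}}$ is the delicate one, and the main obstacle is that $\pi_\infty : \gamma \mapsto \gamma(\infty)$ is \emph{not} continuous on $\Gamma$: because $T$ is only lower semicontinuous, curves stopping later and later may converge to a curve stopping sooner, so their terminal points need not converge. To circumvent this I would approximate $\pi_\infty$ by evaluation at a large finite time. For $M > 0$ the map $e_M : \gamma \mapsto \gamma(M)$ \emph{is} continuous on $\Gamma$, and since every $\gamma \in \Gamma^1$ is constant on $[T(\gamma), +\infty)$ one has $\gamma(M) = \gamma(\infty)$ whenever $T(\gamma) \le M$. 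Hence $e_M$ and $\pi_\infty$ agree on $\Gamma_M = \{T \le M\}$ and can differ only on $\{T > M\}$, a set of uniformly small mass by the tightness Lemma \ref{tight_lem}.

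Concretely, I would fix $f \in \calC(K)$ with $\norm*{f}_\infty \le 1$ and observe that, for any $\eta \in \mathrm{IP}_C(K)$, the integrand below vanishes on $\Gamma_M$, so that
\[ \left| \int_\Gamma f(\gamma(\infty)) \, \eta(\diff\gamma) - \int_\Gamma f(\gamma(M)) \, \eta(\diff\gamma) \right| \le 2\,\eta(\{T > M\}) \le \frac{2C}{M}. \]
Applying this to $\eta_n$ and to $\eta$ and inserting the continuous test function $f \circ e_M$, a triangle inequality yields
\[ \limsup_n \left| \int_\Gamma f(\gamma(\infty)) \, \eta_n(\diff\gamma) - \int_\Gamma f(\gamma(\infty)) \, \eta(\diff\gamma) \right| \le \frac{4C}{M}, \]
the middle difference $\int_\Gamma f(\gamma(M)) \, \eta_n(\diff\gamma) - \int_\Gamma f(\gamma(M)) \, \eta(\diff\gamma)$ tending to $0$ since $f \circ e_M \in \calC(\Gamma)$. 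Letting $M \to \infty$ gives $(\pi_\infty)_\#\eta_n \rightharpoonup (\pi_\infty)_\#\eta$. The uniform tail estimate of Lemma \ref{tight_lem}, valid precisely on $\mathrm{IP}_C(K)$, is the crucial ingredient here, which explains why the $C$-bound cannot be dispensed with for $\boldsymbol{\pi}_{\boldsymbol{\infty}}$.

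For the final assertion I would write
\[ \mathrm{IP}_C(\mu,\nu) = \mathrm{IP}_C(K) \cap \boldsymbol{\pi}_{\boldsymbol{0}}^{-1}(\{\mu\}) \cap \boldsymbol{\pi}_{\boldsymbol{\infty}}^{-1}(\{\nu\}). \]
As $\prob(K)$ is Hausdorff, the singletons $\{\mu\}$ and $\{\nu\}$ are closed, so their preimages under the continuous maps $\boldsymbol{\pi}_{\boldsymbol{0}}$ and $\boldsymbol{\pi}_{\boldsymbol{\infty}}$ are closed; intersecting with $\mathrm{IP}_C(K)$, which is closed by Remark \ref{IPC_compact_rem}, shows that $\mathrm{IP}_C(\mu,\nu)$ is closed, hence compact.
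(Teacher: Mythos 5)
Your proof is correct and follows essentially the same strategy as the paper's: continuity of evaluation at $t=0$ handles $\boldsymbol{\pi}_{\boldsymbol{0}}$, while $\boldsymbol{\pi}_{\boldsymbol{\infty}}$ is treated by approximating $\pi_\infty$ with the continuous evaluation $\gamma \mapsto \gamma(M)$ and controlling the tail $\{T > M\}$ uniformly via Lemma \ref{tight_lem}. The only (cosmetic) difference is that you compare the integrals of $f(\gamma(\infty))$ and $f(\gamma(M))$ over all of $\Gamma$ rather than splitting the domain into $\Gamma_M$ and its complement as the paper does, which in fact sidesteps having to pass to the limit in an integral restricted to the closed, non-open set $\Gamma_M$.
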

\begin{proof}
Clearly the map $\pi_0 : \gamma \mapsto \gamma(0)$ is continuous on $\Gamma$, hence $\boldsymbol{\pi}_{\boldsymbol{0}}$ is continuous on $\mathrm{IP}(K)$. However $\pi_\infty : \gamma \mapsto \gamma(\infty)$ defined on $\Gamma^1$ is not necessarily continuous thus $\boldsymbol{\pi}_{\boldsymbol{\infty}}$ needs not be continuous on $\mathrm{IP}(K)$. Nevertheless, $\pi_\infty$ is continuous on all the sets $\Gamma_M$ for $M > 0$ and the tightness result of Lemma \ref{tight_lem} allows us to conclude. Indeed take $\eta_n \rightharpoonup \eta$ in $\mathrm{IP}_C(K)$. Take $\epsilon >0$ and $M$ large enough so that $\frac{C}{M} \leq \epsilon$. For any $\phi \in \calC(K)$ one has
\begin{align*}
\int_\Gamma \phi(\gamma(\infty)) \eta_n(\diff \gamma) &= \int_{\Gamma_M} \phi(\gamma(M)) \eta_n(\diff \gamma) + \int_{(\Gamma_M)^c} \phi(\gamma(\infty)) \eta_n(\diff \gamma)\shortintertext{and}
\int_\Gamma \phi(\gamma(\infty)) \eta(\diff \gamma) &= \int_{\Gamma_M} \phi(\gamma(M)) \eta(\diff \gamma) + \int_{(\Gamma_M)^c} \phi(\gamma(\infty)) \eta(\diff \gamma)
\end{align*}
thus
\begin{multline*}
\abs*{\int_\Gamma \phi(\gamma(\infty)) \eta_n(\diff \gamma) - \int_\Gamma \phi(\gamma(\infty)) \eta(\diff \gamma)}\\
\leq \abs*{\int_{\Gamma_M} \phi(\gamma(M)) \eta_n(\diff \gamma) - \int_{\Gamma_M} \phi(\gamma(M)) \eta(\diff \gamma)} + 2 \epsilon \norm*{\phi}_\infty.
\end{multline*}
We pass to the $\limsup_n$ using the continuity of $\pi_M : \gamma \mapsto \gamma(M)$ on $\Gamma$, then pass to the limit $\epsilon \to 0$ to yield
\[\int_K \phi(x) (\pi_\infty)_\#\eta_n(\diff x) \to \int_K \phi(x) (\pi_\infty)_\#\eta(\diff x)\]
which means that $\boldsymbol{\pi}_{\boldsymbol{\infty}}$ is continuous on $\mathrm{IP}_C(K)$.
\end{proof}

\subsubsection*{The existence theorem} We are now able to prove the existence theorem for the minimization problem \eqref{lag_pb}.

\begin{thm}
If $\mu,\nu$ are probability measures on $K$, there exists a minimizer $\eta$ of the problem
\begin{equation}\tag{$\text{LI}_\alpha$}
\min_{\eta \in \mathrm{IP}(\mu,\nu)}\quad \int_\Gamma \int_\gamma \abs*{x}_\eta^{\alpha -1} \abs*{\diff x}\eta(\diff \gamma).
\end{equation}
\end{thm}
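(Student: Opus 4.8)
The plan is to run the direct method of the calculus of variations, assembling the compactness and lower semicontinuity results proved above. Write $m \coloneqq \inf_{\eta \in \mathrm{IP}(\mu,\nu)} \mathbf{I}_\alpha(\eta)$. First note that $\mathrm{IP}(\mu,\nu)$ is non-empty: lifting any transport plan $\pi \in \Pi(\mu,\nu)$ through the continuous map sending $(x,y)$ to the constant-speed segment from $x$ to $y$ produces an irrigation plan whose curves have length at most $\mathrm{diam}(K)$, hence of finite $\mathbf{L}$. If $m = +\infty$ every admissible plan is a minimizer and we are done, so I assume $m < +\infty$ and fix a minimizing sequence $(\eta_n)_n$ with $\mathbf{I}_\alpha(\eta_n) \to m$.

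The crux is a coercivity estimate that converts the energy bound into a bound on the stopping time. I claim that $\mathbf{I}_\alpha(\eta) \geq \mathbf{L}(\eta)$ for every $\eta \in \mathrm{IP}(K)$. Indeed, since $\eta$ is a probability measure, $\abs{x}_\eta = \eta(\{\gamma : x \in \gamma\}) \leq 1$ for all $x$, and because $\alpha - 1 \leq 0$ this forces $\theta_\eta^{\alpha-1} \geq 1$ wherever $\theta_\eta > 0$, while on $\{\theta_\eta = 0\}$ the convention $0^{\alpha-1} = +\infty$ only makes the integrand larger; integrating along $\gamma$ gives $Z_{\alpha,\eta}(\gamma) \geq L(\gamma)$, and integrating against $\eta$ gives the claim. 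Now I use the invariance of $\mathbf{I}_\alpha$ under speed normalization: replacing each $\eta_n$ by $\tilde\eta_n \coloneqq \mathrm{sn}_\#\eta_n$ preserves both the cost and the two marginals, so $\tilde\eta_n \in \mathrm{IP}(\mu,\nu)$, and for a normalized plan $\mathbf{T}(\tilde\eta_n) = \mathbf{L}(\tilde\eta_n) \leq \mathbf{I}_\alpha(\tilde\eta_n) = \mathbf{I}_\alpha(\eta_n)$. Since the last quantity is bounded, there is $C > 0$ with $\tilde\eta_n \in \mathrm{IP}_C(\mu,\nu)$ for all $n$.

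It then remains to pass to the limit. By Remark \ref{IPC_compact_rem} the set $\mathrm{IP}_C(K)$ is compact for the weak-$\star$ topology, and by Proposition \ref{marg_cont} its subset $\mathrm{IP}_C(\mu,\nu)$ is closed, hence compact; so a subsequence $\tilde\eta_{n_k}$ converges weakly-$\star$ to some $\bar\eta \in \mathrm{IP}_C(\mu,\nu) \subseteq \mathrm{IP}(\mu,\nu)$. Corollary \ref{lag_lsc} yields the lower semicontinuity of $\mathbf{I}_\alpha$ on $\mathrm{IP}_C(K)$, so that $\mathbf{I}_\alpha(\bar\eta) \leq \liminf_k \mathbf{I}_\alpha(\tilde\eta_{n_k}) = m$. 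As $\bar\eta$ is admissible, the reverse inequality $\mathbf{I}_\alpha(\bar\eta) \geq m$ is automatic, whence $\mathbf{I}_\alpha(\bar\eta) = m$ and $\bar\eta$ is a minimizer.

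I expect the coercivity/compactness step to be the only real obstacle. The admissible class $\mathrm{IP}(\mu,\nu)$ is not itself weak-$\star$ compact — nothing bounds $\mathbf{T}$ a priori and, as seen in Proposition \ref{marg_cont}, $\pi_\infty$ fails to be continuous on all of $\Gamma^1$ — so the whole argument hinges on trapping the minimizing sequence inside one fixed $\mathrm{IP}_C$. It is precisely the combination of the inequality $\mathbf{I}_\alpha \geq \mathbf{L}$ with the normalization identity $\mathbf{L} = \mathbf{T}$ that achieves this; once the sequence is confined there, compactness (Remark \ref{IPC_compact_rem}) and lower semicontinuity (Corollary \ref{lag_lsc}) do the rest.
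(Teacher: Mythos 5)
Your proposal is correct and follows essentially the same route as the paper: normalize the minimizing sequence, use $\mathbf{I}_\alpha \geq \mathbf{L} = \mathbf{T}$ on normalized plans to trap it in $\mathrm{IP}_C(\mu,\nu)$, then conclude by the compactness of $\mathrm{IP}_C(\mu,\nu)$ (Remark \ref{IPC_compact_rem}, Proposition \ref{marg_cont}) and the lower semicontinuity of $\mathbf{I}_\alpha$ (Corollary \ref{lag_lsc}). The only additions beyond the paper's argument are welcome but inessential refinements: the explicit non-emptiness of $\mathrm{IP}(\mu,\nu)$ via lifted transport plans, and the spelled-out justification of the coercivity inequality including the $\theta_\eta = 0$ case.
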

\begin{proof}
We assume $\mathbf{I}_\alpha \not\equiv +\infty$, otherwise there is nothing to prove. Let us take a minimizing sequence $\eta_n$, which we may assume to be normalized. In particular $\mathbf{I}_\alpha(\eta_n) \leq C$ for some $C > 0$. Consequently
\[\mathbf{T}(\eta_n) = \mathbf{L}(\eta_n) \doteq \int_\Gamma \int_\gamma \abs*{\diff x} \eta_n(\diff\gamma) \leq \int_\Gamma \int_\gamma \abs*{x}_\eta^{\alpha -1} \abs*{\diff x} \eta_n(\diff \gamma) \doteq \mathbf{I}_\alpha(\eta_n) \leq C,\]
which implies that $\eta_n \in \mathrm{IP}_C(\mu,\nu)$. Thanks to Proposition \ref{marg_cont}, $\mathrm{IP}_C(\mu,\nu)$ is a closed subset of $\prob(\Gamma)$ which is compact (and metrizable) by Banach-Alaoglu's theorem, hence it is itself compact and we can extract a converging sequence $\eta_n \rightharpoonup \eta \in \mathrm{IP}_C(\mu,\nu)$ up to some renaming. By Corollary \ref{lag_lsc}, $\mathbf{I}_\alpha$ is lsc on $\mathrm{IP}_C(K)$, thus
\[\mathbf{I}_\alpha(\eta) \leq \liminf_n \mathbf{I}_\alpha(\eta_n) = \inf \eqref{lag_pb},\]
which shows that $\eta$ is a minimizer to the problem \eqref{lag_pb}.
\end{proof}

\section{The energy formula}\label{ener_sec}

The goal of this section is to establish the following formula
\begin{equation}\tag{EF}\label{ener_form}
\int_\Gamma \int_\gamma \abs*{x}_\eta^{\alpha -1} \abs*{\diff x}\eta(\diff \gamma) = \int_K \abs*{x}_\eta^\alpha \haus^1(\diff x),
\end{equation}
provided $\eta$ satisfies some hypotheses (namely essential simplicity and rectifiability). The term on the right-hand side is the so-called \emph{Gilbert Energy} denoted by $\mathbf{E}_\alpha(\eta)$. The proof relies solely on the correct use of Fubini-Tonelli's theorem, which requires $\sigma$-finiteness of measures. The next subsection is therefore devoted to the rectifiability of irrigation plans.

\subsection{Rectifiable irrigation plans}

\subsubsection*{Intensity and flow} We define the  \emph{intensity} $i_\eta \in \calM^+(K)$ and \emph{flow} $v_\eta \in \calM^d(K)$ of an irrigation plan $\eta \in \mathrm{IP}(\mu,\nu)$ by the formulas
\begin{align*}
\langle i_\eta, \phi \rangle &= \int_\Gamma \int_\gamma \phi(x) \abs*{\diff x} \:\eta(\diff\gamma),\\
\langle v_\eta, \psi \rangle &= \int_\Gamma \int_\gamma \psi(x) \cdot \diff x \:\eta(\diff\gamma),
\end{align*}
for all $\phi \in \calC(K), \psi \in \calC(K)^d$. The quantity $i_\eta(\diff x)$ represents the mean circulation at $x$ and $v_\eta(\diff x)$ the mean flow at $x$.

\subsubsection*{Concentration and rectifiability}
Let $A$ be a Borel subset of $K$. By definition of $i_\eta$, the following assertions are equivalent:
\begin{enumerate}[label=(\roman*)]
\item $i_\eta$ is concentrated on $A$, \ie $i_\eta(A^c) = 0$,
\item for $\eta$-a.e. $\gamma \in \Gamma$, $\gamma \subseteq A$ up to an $\haus^1$-null set, \ie $\haus^1(\gamma \setminus A) = 0$.
\end{enumerate}
In that case we say (with a slight abuse) that \emph{$\eta$ is concentrated on $A$}. An irrigation plan $\eta \in \mathrm{IP}(K)$ is termed \emph{$\sigma$-finite} if it is concentrated on a $\sigma$-finite set w.r.t. $\haus^1$ and \emph{rectifiable} if it is concentrated on a $1$-rectifiable set\footnote{A $1$-rectifiable set is the union of an $\haus^1$-null set with a countable union of Lipschitz curves.}.

The intensity $i_\eta$ has a simple expression when $\eta$ is $\sigma$-finite, as shown below.

\begin{prop}\label{intens_rectif}
If $\eta$ is an irrigation plan concentrated on a $\sigma$-finite set $A$, then $i_\eta = m_\eta \haus^1_{\mres A}$.
\end{prop}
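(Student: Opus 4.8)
The plan is to convert the curve integrals defining $i_\eta$ into honest integrals against $\haus^1$ by means of the area formula, and then to swap the order of integration with Fubini--Tonelli; the $\sigma$-finiteness hypothesis is exactly what will license that swap.

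First I would fix $\gamma \in \Gamma^1$ and apply the area formula to the $1$-Lipschitz map $\gamma$ restricted to $[0,T(\gamma)]$ (on $[T(\gamma),+\infty[$ one has $\dot\gamma = 0$, so nothing is lost). Since the Jacobian of a curve is $\abs{\dot\gamma}$ and the counting function of $\gamma$ at $x$ is by definition $m(x,\gamma)$, this gives the pointwise identity
\[\int_\gamma \phi(x)\abs{\diff x} = \int_0^\infty \phi(\gamma(t))\abs{\dot\gamma(t)}\diff t = \int_K \phi(x)\, m(x,\gamma)\,\haus^1(\diff x)\]
for every $\phi \in \calC(K)$, the integrand on the right being $\haus^1$-summable since taking $\phi \equiv 1$ yields $\int_K m(x,\gamma)\,\haus^1(\diff x) = L(\gamma) < \infty$.

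Next I would use concentration on $A$ to shrink the $x$-integral. For $\eta$-a.e.\ $\gamma$ one has $\haus^1(\gamma\setminus A) = 0$; as $m(\cdot,\gamma)$ vanishes off the trajectory of $\gamma$ and is $\haus^1$-summable, the part of the above integral carried by $A^c$ is the integral of a summable function over an $\haus^1$-null set, hence $0$. Thus for $\eta$-a.e.\ $\gamma$ one has $\int_\gamma \phi\abs{\diff x} = \int_A \phi(x)\, m(x,\gamma)\,\haus^1_{\mres A}(\diff x)$. Integrating against $\eta$ and writing $\phi = \phi^+ - \phi^-$ to reduce to nonnegative integrands, I would then invoke Fubini--Tonelli on $\Gamma \times A$. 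This is the crux and the place where $\sigma$-finiteness is indispensable: $\eta$ is a probability measure and $\haus^1_{\mres A}$ is $\sigma$-finite precisely because $A$ is $\sigma$-finite for $\haus^1$, so these two $\sigma$-finite spaces legitimately allow
\[\int_\Gamma \int_A \phi(x)\, m(x,\gamma)\,\haus^1_{\mres A}(\diff x)\,\eta(\diff\gamma) = \int_A \phi(x)\Bigl(\int_\Gamma m(x,\gamma)\,\eta(\diff\gamma)\Bigr)\haus^1_{\mres A}(\diff x).\]
Recognizing the inner integral as $m_\eta(x)$ gives $\langle i_\eta,\phi\rangle = \langle m_\eta\haus^1_{\mres A},\phi\rangle$ for all $\phi \in \calC(K)$; since $i_\eta$ and $m_\eta\haus^1_{\mres A}$ are finite Borel measures on the compact set $K$ agreeing against every continuous test function, they coincide, which is the claim.

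The main obstacle I anticipate is not any computation but the measurability bookkeeping underlying Fubini--Tonelli: one must verify that $(x,\gamma)\mapsto m(x,\gamma)$ is jointly Borel (or at least $\eta\otimes\haus^1_{\mres A}$-measurable) for the theorem to apply, and one must be scrupulous about the fact that $\haus^1$ on $K$ itself need not be $\sigma$-finite — it is only after restriction to $A$ that the exchange is valid, which is exactly why the hypothesis of concentration on a $\sigma$-finite set is imposed.
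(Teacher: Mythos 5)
Your proof is correct and follows essentially the same route as the paper's: the area formula for Lipschitz curves turns each curve integral into an $\haus^1$-integral weighted by $m(\cdot,\gamma)$, concentration on $A$ restricts this to $\haus^1_{\mres A}$, and Fubini--Tonelli (licensed precisely by the $\sigma$-finiteness of $\haus^1_{\mres A}$) yields $m_\eta$. The only cosmetic difference is that you test against continuous functions $\phi$ and conclude by uniqueness of finite Borel measures, whereas the paper evaluates both measures directly on an arbitrary Borel set $B$.
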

\begin{proof}
For any Borel set $B$, one has
\begin{align*}
i_\eta(B) = i_\eta(A \cap B) &= \int_\Gamma \int_\gamma \indic_{A \cap B} \abs*{\diff x} \eta(\diff \gamma)\\
&= \int_\Gamma \int_B m(x,\gamma) \haus^1_{\mres A} (\diff x) \eta(\diff \gamma)\\
&= \int_B \int_\Gamma m(x,\gamma) \eta(\diff\gamma) \haus^1_{\mres A}(\diff x)\\
&= \int_B m_\eta(x) \haus^1_{\mres A}(\diff x).
\end{align*}
The equality on the second line follows from the coarea formula, the next from Fubini-Tonelli's theorem which holds because measures are $\sigma$-finite, and the last one from the definition of $m_\eta$.
\end{proof}

In order to prove that the domain $D_\eta \doteq \{ x \in K : \theta_\eta(x) > 0\}$ of an irrigation plan is $1$-rectifiable, we will need a few notions and non-trivial lemmas of geometric measure theory.

\subsubsection*{Density of a set} If $A$ is a subset of $\R^d$ we define the upper and lower $1$-density of $A$ at $x$ as
\begin{align*}
\overline{\Theta}(x,A) &= \limsup_{r \downarrow 0} \frac{\haus^1(A\cap B(x,r))}{2r},&
\underline{\Theta}(x,A) &= \liminf_{r \downarrow 0} \frac{\haus^1(A\cap B(x,r))}{2r}.
\end{align*}
When these quantities are equal, we call their common value $\Theta(x,A)$ the $1$-density of $A$ at $x$.

The first lemma we will need is proved in \cite[Chapter 8]{mattila1999geometry}.
\begin{lem}\label{haus_inn_reg}
If $B \subseteq \R^d$,
\[\haus^1(B) = \sup \{ \haus^1(K) : K \subseteq B \text{ compact such that } \haus^1(K) < \infty\}.\]
\end{lem}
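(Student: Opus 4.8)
The inequality $\sup\{\haus^1(K) : K \subseteq B \text{ compact, } \haus^1(K) < \infty\} \le \haus^1(B)$ is immediate from monotonicity of $\haus^1$, so the whole content of the lemma is the reverse inequality, that is, \emph{inner regularity} of $\haus^1$ by compact subsets. One caveat first: the equality can fail when $B$ is not $\haus^1$-measurable (a Vitali-type subset $V$ of a segment has positive $\haus^1$-outer measure but only $\haus^1$-null compact subsets, since $\haus^1$ coincides with one-dimensional Lebesgue outer measure along the segment). I therefore read the statement as concerning $\haus^1$-measurable --- in particular Borel --- sets $B$, which is all that the paper uses. The plan is to split according to whether $\haus^1(B)$ is finite or infinite.

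\textbf{Finite case.} If $\haus^1(B) < \infty$ and $B$ is Borel, then $\nu \coloneqq \haus^1 \mres B$ is a \emph{finite} Borel measure on $\R^d$, which is locally compact and second countable, hence $\nu$ is a Radon measure. Radon measures are inner regular by compact sets on every Borel set, so $\nu(B) = \sup\{\nu(C) : C \subseteq B \text{ compact}\}$. For such $C$ one has $\nu(C) = \haus^1(C) \le \haus^1(B) < \infty$, while $\nu(B) = \haus^1(B)$, and this is exactly the desired equality. The only points needing care are that $\haus^1 \mres B$ is genuinely a Borel measure (Borel regularity of $\haus^1$ together with measurability of $B$) and that finite Borel measures on a Polish space are tight, which is Ulam's theorem.

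\textbf{Infinite case.} Here I must show the supremum equals $+\infty$, i.e.\ that $B$ contains compact subsets of arbitrarily large finite $\haus^1$-measure. A preliminary reduction uses continuity from below, $\haus^1(B \cap \bar B(0,R)) \uparrow \haus^1(B)$ as $R \to \infty$, to replace $B$ by a bounded Borel set of still-infinite measure; but this does \emph{not} dispose of the difficulty, because a bounded set may perfectly well carry infinite $\haus^1$-measure. The genuine obstacle is therefore the following classical and non-trivial selection theorem of geometric measure theory (Besicovitch--Davies), valid for Souslin and \emph{a fortiori} Borel sets: for every $t < \haus^1(B)$ there exists a compact $K \subseteq B$ with $t < \haus^1(K) < \infty$. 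Letting $t \to \infty$ then gives $\sup\{\haus^1(K)\} = \infty = \haus^1(B)$.

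In summary, I would write out the finite case in full and import the infinite case from \cite[Chapter 8]{mattila1999geometry}, flagging the Besicovitch--Davies subset theorem as the deep ingredient on which the lemma ultimately rests: everything else is soft (monotonicity, the Radon property of finite Borel measures, and continuity of $\haus^1$ along increasing Borel sequences), whereas producing compact subsets of prescribed large finite measure inside a set of infinite measure is precisely where the real geometric-measure-theoretic work lies.
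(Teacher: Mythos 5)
Your proof is correct, and in fact it does more than the paper, whose entire ``proof'' of this lemma is the citation to \cite[Chapter 8]{mattila1999geometry}: the deep ingredient you isolate in your infinite case --- the Besicovitch--Davies subset theorem, producing compact subsets of arbitrarily large (indeed prescribed) finite $\haus^1$-measure inside a Souslin set of infinite measure --- is exactly the result hiding behind that citation, so the hard kernel of your argument and the paper's coincide. What you add is worthwhile on two counts. First, your measurability caveat is a genuine correction to the statement as written: for arbitrary $B \subseteq \R^d$ the lemma is false, by your Vitali example in the finite-outer-measure regime, or by a Bernstein set in the plane (which has $\haus^1(B) = \infty$ yet only countable, hence $\haus^1$-null, compact subsets) in the infinite regime; Mattila's theorem carries a Souslin hypothesis that the paper silently drops. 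This is harmless for the paper's application, since the lemma is only ever applied to the sets $D^n = \{x : \theta_\eta(x) > 1/n\}$, which are Borel (for normalized $\eta$ the function $\theta_\eta$ is upper semicontinuous, so $D^n$ is even $F_\sigma$), but the hypothesis deserves to be stated. Second, your finite case via the Radon property of $\haus^1 \mres B$ (Ulam tightness) is a correct soft argument, though logically subsumed by the citation; its value is expository, in making plain --- together with your correct remark that truncation to $B \cap \bar{B}(0,R)$ does not help --- that all the genuine geometric-measure-theoretic content sits in the non-$\sigma$-finite case.
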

The second is due to Besicovitch and may be obtained as a particular case of \cite[Theorem 17.6]{mattila1999geometry}.
\begin{lem}\label{rectif_lem}
Let $E$ be an $\haus^1$-measurable set such that $\haus^1(E) < \infty$. If its $1$-density $\Theta(x,E)$ exists and is equal to $1$ for $\haus^1$-a.e. $x$ in $E$ then it is $1$-rectifiable.
\end{lem}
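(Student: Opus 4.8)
The plan is to show that the purely $1$-unrectifiable part of $E$ must be $\haus^1$-negligible, so that $E$ coincides with a rectifiable set up to a null set. I would start from the \textbf{decomposition of $1$-sets}: any $\haus^1$-measurable $E$ with $\haus^1(E) < \infty$ splits as $E = E_r \cup E_u$ with $E_r \cap E_u = \emptyset$, where $E_r$ is $1$-rectifiable and $E_u$ is purely $1$-unrectifiable (it meets every $1$-rectifiable set in an $\haus^1$-null set). Granting this, it suffices to prove $\haus^1(E_u) = 0$, since then $E$ agrees with $E_r$ up to a null set and is $1$-rectifiable.

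The second step is to transfer the density hypothesis from $E$ to $E_u$. Since $\haus^1(E_r) < \infty$, the standard density theorem for Hausdorff measure gives $\overline{\Theta}(x, E_r) = 0$ for $\haus^1$-a.e.\ $x \notin E_r$, and in particular for $\haus^1$-a.e.\ $x \in E_u$ (as $E_u \subseteq E_r^c$ and the exceptional set is $\haus^1$-null). Writing the additive splitting of mass in balls,
\[
\frac{\haus^1(E \cap B(x,r))}{2r} = \frac{\haus^1(E_r \cap B(x,r))}{2r} + \frac{\haus^1(E_u \cap B(x,r))}{2r},
\]
and letting $r \downarrow 0$ at such a point: the left-hand side tends to $\Theta(x,E) = 1$ by hypothesis, while the first term on the right tends to $0$. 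Hence $\Theta(x, E_u)$ exists and equals $1$, so in particular $\underline{\Theta}(x, E_u) = 1$, for $\haus^1$-a.e.\ $x \in E_u$.

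The decisive final ingredient is the \textbf{Besicovitch lower-density bound} for purely unrectifiable sets: if $E_u$ is purely $1$-unrectifiable with $\haus^1(E_u) < \infty$, then $\underline{\Theta}(x, E_u) \leq \tfrac{3}{4}$ for $\haus^1$-a.e.\ $x \in E_u$. Comparing this with $\underline{\Theta}(x, E_u) = 1$ a.e.\ forces $\haus^1(E_u) = 0$, and the proof concludes.

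I expect the main obstacle to be precisely this last step: the bound $\underline{\Theta} < 1$ a.e.\ on a purely unrectifiable set is the deep part of Besicovitch's structure theory, where the genuine geometry enters (delicate comparison of mass in nested balls, or equivalently the projection and tangent-measure machinery), whereas both the decomposition and the ``upper density zero outside a set'' fact are comparatively soft. This is exactly why the statement is quoted as \cite[Theorem 17.6]{mattila1999geometry} rather than reproved here.
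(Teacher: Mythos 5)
Your proposal is correct, but note that the paper does not actually prove this lemma: it is quoted as a particular case of Besicovitch's density theorem, \cite[Theorem 17.6]{mattila1999geometry}, so there is no internal argument to compare against. What you have written is the standard skeleton of the proof of the hard direction of that theorem, and it is a genuine derivation where the paper has only a citation. Your soft steps check out: the decomposition $E = E_r \cup E_u$ into a rectifiable and a purely unrectifiable part is classical (Theorem 15.6 in the same book); the transfer of the density hypothesis to $E_u$ uses exactly the fact that a set of finite $\haus^1$-measure has upper $1$-density zero at $\haus^1$-a.e.\ point of its complement, which is the same Theorem 6.2 of Mattila that the paper invokes for Lemma \ref{dens_ub}, and the additivity $\haus^1(E\cap B(x,r)) = \haus^1(E_r\cap B(x,r)) + \haus^1(E_u\cap B(x,r))$ together with your liminf/limsup bookkeeping is right. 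Two remarks on the final, deep step. First, your reduction avoids circularity only because the lower-density bound on purely unrectifiable sets admits a proof independent of the density characterization -- which it does: it is the engine behind Besicovitch's theorem, not a corollary of it. Second, a citation caveat: the constant $\tfrac34$ is Besicovitch's \emph{planar} theorem, whereas here $K \subset \R^d$; for $1$-sets in $\R^d$ the needed extension is due to Moore (and later quantitative improvements by Preiss and Ti\v{s}er hold in arbitrary metric spaces), though any constant strictly smaller than $1$ suffices for your contradiction, so the argument is unaffected. With that reference adjusted, your outline buys something the paper's bare citation does not: it isolates precisely where the geometric depth lies (the lower-density bound on purely unrectifiable sets), showing that everything else in the lemma is routine measure theory.
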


Finally, we will need the following result which is included in \cite[Theorem 6.2]{mattila1999geometry}.

\begin{lem}\label{dens_ub}
If $E$ is a set such that $\haus^1(E) < \infty$, then the upper $1$-density $\overline{\Theta}(x,E)$ is less than $1$ for $\haus^1$-a.e. $x$ in $E$.
\end{lem}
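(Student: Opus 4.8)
The plan is to reduce, for each fixed threshold $t>1$, to showing that the superlevel set $A_t := \{x\in E : \overline{\Theta}(x,E) > t\}$ is $\haus^1$-negligible. Since $\{x \in E: \overline{\Theta}(x,E) > 1\} = \bigcup_{k\geq 1} A_{1+1/k}$ is a countable union, this will give $\overline{\Theta}(x,E) \le 1$ for $\haus^1$-a.e. $x \in E$, which is the content of the lemma. Throughout I would work with the measure $\mu := \haus^1 \mres E$, which is finite by hypothesis, hence a Radon measure on $\R^d$; in particular it is outer regular, and since $x \mapsto \overline{\Theta}(x,E)$ is Borel, each $A_t$ is Borel.

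The core is a Vitali covering argument. Fix $t > 1$, $\epsilon > 0$ and $\delta > 0$. By outer regularity I choose an open set $U \supseteq A_t$ with $\mu(U) \le \mu(A_t) + \epsilon$. For every $x \in A_t$ the definition of the upper density produces arbitrarily small radii $r < \delta/2$ for which simultaneously $\overline{B}(x,r) \subseteq U$ (since $U$ is open) and $\haus^1\big(E \cap \overline{B}(x,r)\big) > t\,(2r) = t \cdot \operatorname{diam}\overline{B}(x,r)$. The family of all such closed balls is a fine cover of $A_t$, so the Vitali covering theorem for $\haus^1$ furnishes a countable disjoint subfamily $\big(\overline{B}(x_i,r_i)\big)_i$ with $\haus^1\big(A_t \setminus \bigcup_i \overline{B}(x_i,r_i)\big) = 0$.

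Then I would estimate. Since the balls have diameter $2r_i < \delta$ and cover $A_t$ up to an $\haus^1$-null (hence $\haus^1_\delta$-null) set, one has $\haus^1_\delta(A_t) \le \sum_i 2r_i$. By the density inequality $2r_i < t^{-1}\haus^1(E\cap \overline{B}(x_i,r_i))$, and by disjointness together with $\bigcup_i \overline{B}(x_i,r_i) \subseteq U$, we get $\sum_i \haus^1(E \cap \overline{B}(x_i,r_i)) = \mu\big(\bigcup_i \overline B(x_i,r_i)\big) \le \mu(U) \le \mu(A_t)+\epsilon$. Chaining these bounds yields $\haus^1_\delta(A_t) \le t^{-1}(\haus^1(A_t)+\epsilon)$. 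Letting $\delta \downarrow 0$ and then $\epsilon \downarrow 0$ gives $\haus^1(A_t) \le t^{-1}\haus^1(A_t)$; as $t > 1$ and $\haus^1(A_t) \le \haus^1(E) < \infty$, this forces $\haus^1(A_t) = 0$, completing the reduction.

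The one genuinely nontrivial ingredient — and the main obstacle — is the sharp Vitali covering theorem for $\haus^1$, which lets the extracted disjoint subfamily exhaust $A_t$ up to a null set with no enlargement of the balls. This sharpness is exactly what produces the constant $1$: replacing it with the elementary $5r$-covering lemma would only give $\haus^1_{5\delta}(A_t) \le 5t^{-1}(\haus^1(A_t)+\epsilon)$ and hence the far weaker bound $\overline{\Theta}(x,E) \le 5$. The hypothesis $\haus^1(E) < \infty$ is used twice: to make $\mu$ a finite, outer regular Radon measure, and to cancel the term $\haus^1(A_t)$ at the end.
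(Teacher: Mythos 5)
The paper never proves this lemma: it is imported as a known result with a pointer to Theorem 6.2 of Mattila's book, so there is no in-paper argument to compare yours against. Your blind proof is correct, and it is in substance the standard proof of the cited theorem: reduction to the superlevel sets $A_t$, an open neighbourhood $U$ of $A_t$ chosen by outer regularity of the finite measure $\mu = \haus^1 \mres E$, extraction of a disjoint subfamily from a fine cover of closed balls by a Vitali/Besicovitch covering theorem (you correctly isolate this as the ingredient that yields the constant $1$ instead of the $5$ coming from the elementary $5r$-covering lemma), and the chain $\haus^1_\delta(A_t) \le \sum_i 2 r_i \le t^{-1} \mu(U) \le t^{-1}(\haus^1(A_t)+\epsilon)$, followed by $\delta \downarrow 0$ and $\epsilon \downarrow 0$. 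What your route buys is a self-contained treatment where the paper defers to the literature; what it costs is two technical points that deserve tightening. First, the lemma is stated for an \emph{arbitrary} set $E$ of finite measure; if $E$ is not $\haus^1$-measurable, your claims that $\haus^1 \mres E$ is a Radon measure and that $x \mapsto \overline{\Theta}(x,E)$ is Borel are not immediate. The standard fix is a measurable hull: pick a Borel set $B \supseteq E$ with $\haus^1(B) = \haus^1(E)$; then $\haus^1(E \cap M) = \haus^1(B \cap M)$ for every $\haus^1$-measurable $M$ (in particular for all balls and open sets), so $E$ and $B$ have the same density function, and the argument can be run for $B$. Second, if the covering theorem you invoke is the dichotomy version for Hausdorff measures (either the disjoint balls cover $A_t$ up to a null set, or $\sum_i 2r_i = \infty$), you should note that the second alternative is excluded here, because disjointness and the density inequality already force $\sum_i 2r_i \le t^{-1}\mu(U) < \infty$; if instead you use the Besicovitch--Vitali theorem for the Radon measure $\mu$, no such remark is needed. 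Neither point is a genuine gap; both are one-line patches to an otherwise complete and correct proof.
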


\begin{prop}[Rectifiability of the domain]\label{rectif_dom}
If $\eta \in \mathrm{IP}(K)$ is an irrigation plan, its domain $D_\eta$ is $1$-rectifiable.
\end{prop}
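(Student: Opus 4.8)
The plan is to reduce the statement to the density characterisation of rectifiable sets (Lemma \ref{rectif_lem}), which means I must first cut $D_\eta$ into pieces of finite $\haus^1$-measure and then compute the $1$-density of $D_\eta$ at $\haus^1$-almost every one of its points. Accordingly I would proceed in three stages: (i) prove that $D_\eta$ is $\sigma$-finite with respect to $\haus^1$; (ii) show that $\underline{\Theta}(x,D_\eta)\ge 1$ and $\overline{\Theta}(x,D_\eta)\le 1$ for $\haus^1$-a.e. $x\in D_\eta$; (iii) transfer these density identities to the finite pieces and invoke Lemma \ref{rectif_lem}. Throughout I may assume $\eta$ normalized, since $\theta_\eta$, $D_\eta$ and $i_\eta$ depend only on the trajectories.

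For (i), set $E_\epsilon = \{x : \theta_\eta(x)\ge\epsilon\}$, so that $D_\eta=\bigcup_{n}E_{1/n}$. I would bound the lower $1$-density of the \emph{measure} $i_\eta$ from below by the multiplicity: expanding $i_\eta(\bar B(x,r)) = \int_\Gamma\int_\gamma\indic_{\bar B(x,r)}\abs{\diff x}\,\eta(\diff\gamma)$ and using that an arclength-parameterized curve passing through $x$ at an interior time spends arclength at least $2r$ inside $\bar B(x,r)$ (at least $r$ if $x$ is an endpoint), Fatou's lemma yields $\liminf_{r\to 0} i_\eta(\bar B(x,r))/2r \ge \tfrac12\theta_\eta(x)$. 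On $E_\epsilon$ this lower density is $\ge\epsilon/2$, so the standard density comparison between a finite Radon measure and $\haus^1$ (see \cite{mattila1999geometry}, using the inner regularity of Lemma \ref{haus_inn_reg} to reduce to compact sets) gives $\haus^1(E_\epsilon)\le C\epsilon^{-1}i_\eta(K) = C\epsilon^{-1}\mathbf{L}(\eta)<\infty$. Hence every $E_\epsilon$ has finite measure and $D_\eta$ is $\sigma$-finite.

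Now that $D_\eta$ is $\sigma$-finite, the computation of Proposition \ref{intens_rectif} (the area formula on each curve followed by Fubini--Tonelli) applies to every Borel $B\subseteq D_\eta$ and gives $i_\eta(B)=\int_B m_\eta\,\diff\haus^1$; moreover $m_\eta\ge\theta_\eta>0$ on $D_\eta$, so an $i_\eta$-negligible subset of $D_\eta$ is automatically $\haus^1$-negligible — this is the device I will use repeatedly to upgrade mass estimates into measure estimates. For the lower bound in (ii) I would argue curve by curve: fix any $\gamma\in\Gamma^1$; its image is a rectifiable set of finite length, so $\Theta(y,\gamma)=1$ for $\haus^1$-a.e. $y\in\gamma$, and by the Lebesgue density theorem for the Radon measure $\haus^1_{\mres\gamma}$, $\haus^1$-a.e. $y\in\gamma\cap D_\eta$ is a density point of $\gamma\cap D_\eta$ relative to $\gamma$. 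At such a $y$ one gets $\haus^1(D_\eta\cap B(y,r))\ge\haus^1(\gamma\cap D_\eta\cap B(y,r)) = (1-o(1))\,2r$, hence $\underline{\Theta}(y,D_\eta)\ge1$. Thus the bad set $B\coloneqq\{x\in D_\eta : \underline{\Theta}(x,D_\eta)<1\}$ meets every curve in an $\haus^1$-null set, so $i_\eta(B)=\int_\Gamma\int_\gamma\indic_B\abs{\diff x}\,\eta(\diff\gamma)=0$, and the device above forces $\haus^1(B)=0$. The upper bound follows from Lemma \ref{dens_ub} applied on each finite piece $E_\epsilon$, transported to $D_\eta$ by Besicovitch differentiation of $\haus^1_{\mres D_\eta}$, which makes $E_\epsilon$ and $D_\eta$ share the same density at $\haus^1$-a.e. point of $E_\epsilon$.

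Combining the two bounds gives $\Theta(x,D_\eta)=1$ for $\haus^1$-a.e. $x\in D_\eta$; the same Besicovitch differentiation then yields $\Theta(x,E_\epsilon)=1$ for $\haus^1$-a.e. $x\in E_\epsilon$, so each $E_\epsilon$ has finite measure and density one almost everywhere and is therefore $1$-rectifiable by Lemma \ref{rectif_lem}; a countable union over $\epsilon=1/n$ gives the rectifiability of $D_\eta$. I expect the genuine difficulty to be the lower density bound $\underline{\Theta}(x,D_\eta)\ge1$: one cannot simply follow a single curve through $x$, since the curves carrying positive multiplicity at $x$ may fan out into the zero-multiplicity region $\{\theta_\eta=0\}$, where $i_\eta$ need not even be $\sigma$-finite. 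The point that makes the curve-by-curve argument work is that it is unconditional — it uses only the Lebesgue density theorem on $\haus^1_{\mres\gamma}$, valid for every finite-length curve — and that $m_\eta>0$ on $D_\eta$ lets me convert the resulting $i_\eta$-null statement into the $\haus^1$-null statement I actually need.
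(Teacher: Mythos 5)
Your overall architecture (cut $D_\eta$ into finite-measure pieces, prove density one, invoke Lemma~\ref{rectif_lem}) is the paper's, and your stage (i) and your curve-by-curve lower bound are sound ideas, but the way you assemble stages (ii)--(iii) contains a genuine error: you compute densities of $D_\eta$ \emph{itself} and then try to transfer them to the pieces $E_\epsilon$ by \enquote{Besicovitch differentiation of $\haus^1_{\mres D_\eta}$}. This cannot work, because $\haus^1_{\mres D_\eta}$ is in general not locally finite (hence not a Radon measure, so no differentiation theorem applies to it), and in fact the intermediate claim $\overline{\Theta}(x,D_\eta)\le 1$ for $\haus^1$-a.e.\ $x\in D_\eta$ is \emph{false}. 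Concretely, in $\R^2$ let $\gamma_0$ parameterize $[0,1]\times\{0\}$, let $\gamma_n$ parameterize $[0,1]\times\{1/n\}$, and set $\eta=\tfrac12\delta_{\gamma_0}+\sum_{n\ge 1}2^{-n-1}\delta_{\gamma_n}$; then $\mathbf{L}(\eta)=1$ and $D_\eta$ is the union of all these segments, but every ball $B(x,r)$ centred at a point $x$ of the bottom segment contains the segments $[0,1]\times\{1/n\}$ for all $n>1/r$, each meeting it in length comparable to $r$, so $\haus^1(D_\eta\cap B(x,r))=+\infty$ for every $r>0$. Thus $\overline{\Theta}(x,D_\eta)=+\infty$ at \emph{every} point of $E_{1/2}=[0,1]\times\{0\}$, a set of measure one, while $\Theta(x,E_{1/2})=1$ there: the upper bound of your stage (ii) fails on a set of positive measure, and $E_\epsilon$ and $D_\eta$ do \emph{not} share densities at a.e.\ point of $E_\epsilon$, so the transfer on which stage (iii) rests collapses.

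The repair is short and uses only tools you already have: never compute densities of $D_\eta$, and run your curve-by-curve argument directly on each piece, exactly as the paper does with $D^n=\{x:\theta_\eta(x)>1/n\}$. For fixed $\gamma\in\Gamma^1$ and each $n$, differentiation of the \emph{finite} Radon measure $\haus^1_{\mres\gamma}$ shows that $\haus^1$-a.e.\ $y\in\gamma\cap E_{1/n}$ is a density point of $\gamma\cap E_{1/n}$ relative to $\gamma$, which together with $\Theta(y,\gamma)=1$ gives $\underline{\Theta}(y,E_{1/n})\ge 1$ --- a lower bound for the piece, not just for $D_\eta$; your $i_\eta$-null $\Rightarrow$ $\haus^1$-null device (legitimate on subsets of $E_{1/n}$, which have finite measure) kills the bad set, Lemma~\ref{dens_ub} applied to $E_{1/n}$ gives the matching upper bound, and Lemma~\ref{rectif_lem} applies to each $E_{1/n}$. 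This is precisely the paper's proof: its Lebesgue-point set $A_\gamma$ is defined relative to \emph{all} the pieces $D^n$ simultaneously (in the time parameter rather than on the image, an inessential difference), which is exactly what makes the transfer you attempted unnecessary. Two lesser remarks: your stage-(i) pointwise bound $\liminf_{r\to 0} i_\eta(\bar B(x,r))/2r\ge\tfrac12\theta_\eta(x)$ fails at points carrying an atom of constant curves (their arclength contribution is $0$, not $r$); since such points form a countable, hence $\haus^1$-null, set, the conclusion $\haus^1(E_\epsilon)<\infty$ survives, but the estimate needs that exception. The paper's finiteness argument (contradiction via Lemma~\ref{haus_inn_reg} and Fubini--Tonelli on a compact piece of finite measure) sidesteps this issue entirely.
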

\begin{proof}
First of all, since the domain does not change under normalization, we may assume that $\eta$ is parameterized by arc length. We have $D_\eta = \bigcup_{n>0} D^n$ where
\[D^n \coloneqq \left\{x : \theta_\eta(x) > \frac{1}{n}\right\}.\]
Let us show that that $\haus^1(D^n) < \infty$. By contradiction assume that for some $n$, $\haus^1(D^n) = \infty$, hence thanks to Lemma \ref{haus_inn_reg} one can find for $M >0$ as large as we want a compact subset $K' \subseteq D^n$ such that $M \leq \haus^1(K') < \infty$.
Since $\haus^1(K') < \infty$ one can use Fubini-Tonelli's theorem to get
\begin{align*}
\mathbf{L}(\eta) \doteq \int_\Gamma L(\gamma) \eta(\diff \gamma) \geq \int_\Gamma \haus^1(\gamma \cap K') \eta(\diff \gamma) &= \int_\Gamma \int_{K'} \indic_{x \in \gamma} \haus^1(\diff x) \eta(\diff \gamma)\\
&= \int_{K'} \int_\Gamma \indic_{x\in \gamma} \eta(\diff\gamma) \haus^1(\diff x)\\
&= \int_{K'} \theta_\eta(x) \haus^1(\diff x) > \frac{M}{n}.
\end{align*}
The inequality $\mathbf{L}(\eta) > \frac{M}{n}$ must be true for all $M > 0$, \ie $\mathbf{L}(\eta) = \infty$, which contradicts the definition of an irrigation plan, hence $\haus^1(D^n) < \infty$.

Now we shall prove that $\Theta(x,D^n) = 1$ a.e. on $D^n$. Since $D^n$ has finite $\haus^1$-measure, we already know $\overline{\Theta}(x,D^n) \leq 1$ for $\haus^1$-a.e. $x \in D^n$ by Lemma \ref{dens_ub}, thus it remains only to prove $\underline{\Theta}(x,D^n) \geq 1$. If $A$ is a Borel subset of $\R$ we denote $\leb(A)$ the set of Lebesgue points of $A$, which are points $t$ such that
\[\lim_{r\downarrow 0} \frac{\abs*{A \cap [t-r,t+r]}}{2r} = 1,\]
where $\abs*{X}$ denotes the Lebesgue measure of $X \subseteq \R$. Recall that by Lebesgue's theorem we have $\abs*{A \setminus \leb(A)} = 0$. For any $\gamma \in \Gamma^1$, we set
\begin{align*}
A_\gamma &= \left\{t : t \in \leb\left(s : \abs*{\gamma(s)}_\eta > \frac{1}{n}\right) \text{ for all $n$ s.t. } \abs*{\gamma(t)}_\eta > \frac{1}{n}\right\},\\
B_\gamma &=  \left\{t \in ]0,T(\gamma)[\; : \dot{\gamma}(t) \text{ exists}\right\},\\
D_\gamma &= \gamma(A_\gamma \cap B_\gamma).
\end{align*}
Notice that $\abs*{[0,T(\gamma)] \setminus (A_\gamma \cup B_\gamma)} = 0$ hence $\haus^1(\gamma \setminus D_\gamma) = 0$ since $\gamma$ is Lipschitz. Finally we set
\[D' = \bigcup_{\gamma \in \Gamma^1} D_\gamma.\]
Let us check that $\haus^1(D_\eta \setminus D') = 0$. We have
\begin{align*}
\int_{D_\eta \setminus D'} \theta_\eta(x) \haus^1(\diff x) &= \int_{D_\eta \setminus D'} \int_\Gamma \indic_{x\in \gamma} \eta(\diff \gamma) \haus^1(\diff x)\\
&= \int_\Gamma \int_{D_\eta \setminus D'} \indic_{x\in \gamma} \haus^1(\diff x) \eta(\diff \gamma)\\
&= \int_{\Gamma^1} \haus^1(D_\eta \cap \gamma \setminus D') \eta(\diff \gamma)\\
&= 0.
\end{align*}
The use of Fubini-Tonelli's theorem is justified since $D_\eta = \bigcup_n D^n$ is $\sigma$-finite and the last equality follows from $\haus^1(D_\eta \cap \gamma \setminus D') \leq \haus^1(\gamma \setminus D_\gamma) = 0$. This implies $\haus^1(D_\eta \setminus D') = 0$ since $\theta_\eta > 0$ on $D_\eta$. Now take any $x \in D^n \cap D'$. By construction of $D'$ there is a curve $\gamma \in \Gamma^1$ and a $t \in A_\gamma \cap B_\gamma$ such that $x = \gamma(t)$, which implies that
\begin{align}\label{rectif_dom_eq1}
\frac{\abs*{s \in [t-r, t+r] : \gamma(s) \in D^n}}{2r} &\xrightarrow{r \downarrow 0} 1\shortintertext{and}
\frac{\haus^1(\gamma([t-r,t+r]) \setminus D^n)}{2r} &\xrightarrow{r \downarrow 0} 0.\label{rectif_dom_eq2}
\end{align}
It follows from \eqref{rectif_dom_eq2} and the fact that $\gamma([t-r,t+r]) \subseteq \bar{B}(x,r)$ because $\gamma$ is $1$-Lipschitz that
\[\underline{\Theta}(x,D^n) \doteq \liminf_{r\downarrow 0} \frac{\haus^1(B(x,r) \cap D^n)}{2r} \geq \liminf_{r\downarrow 0} \frac{\haus^1(\gamma([t-r,t+r]))}{2r}.\]
But $\gamma$ has a derivative $e$ at $t$ which has unit norm. Moreover the $\haus^1$-measure of $\gamma([t-r,t+r])$, which is a compact connected set, is larger than the distance between $\gamma(t-r)$ and $\gamma(t+r)$, and since $\gamma(t \pm r) = x \pm r e + o(r)$ one has
\[\haus^1(\gamma([t-r,t+r])) \geq \abs*{\gamma(t+r) - \gamma(t-r)} = 2r + o(r),\]
which yields $\underline{\Theta}(x,D^n) \geq 1$.
This proves that $\Theta(x,D^n)$ exists and is equal to $1$ for $\haus^1$-a.e. $x \in D^n$ hence $D^n$ is $1$-rectifiable by Lemma \ref{rectif_lem} and $D_\eta = \bigcup_n D^n$ as well.
\end{proof}

At this stage we have shown that the domain of any irrigation plan is rectifiable, yet this does not mean that any irrigation plan is rectifiable (this is obviously not the case) since $\eta$ needs not be concentrated on $D_\eta$. However, it is essentially the only candidate rectifiable set (or even candidate $\sigma$-finite set) on which $\eta$ could be concentrated, as stated below.

\begin{cor}\label{rectif_equiv}
Given $\eta \in \mathrm{IP}(K)$ an irrigation plan, the following assertions are equivalent:
\begin{enumerate}[label=(\roman*)]
\item\label{rectif_equiv1} $\eta$ is concentrated on $D_\eta$,
\item\label{rectif_equiv2} $\eta$ is rectifiable,
\item\label{rectif_equiv3} $\eta$ is $\sigma$-finite.
\end{enumerate}
\end{cor}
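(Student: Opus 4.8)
The plan is to prove the cycle of implications \ref{rectif_equiv1} $\Rightarrow$ \ref{rectif_equiv2} $\Rightarrow$ \ref{rectif_equiv3} $\Rightarrow$ \ref{rectif_equiv1}; the first two are essentially tautological once Proposition \ref{rectif_dom} is in hand, while the last one carries all the content. For \ref{rectif_equiv1} $\Rightarrow$ \ref{rectif_equiv2}, Proposition \ref{rectif_dom} asserts precisely that $D_\eta$ is $1$-rectifiable, so if $\eta$ is concentrated on $D_\eta$ it is by definition concentrated on a $1$-rectifiable set, hence rectifiable. For \ref{rectif_equiv2} $\Rightarrow$ \ref{rectif_equiv3}, I would simply note that any $1$-rectifiable set is $\sigma$-finite with respect to $\haus^1$: it is the union of an $\haus^1$-null set with countably many Lipschitz curves, and the image of a Lipschitz map defined on $\R$ is $\sigma$-finite, being the countable union of the images of $[-n,n]$, each of finite measure bounded by $2n$ times the Lipschitz constant. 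A set on which $\eta$ is concentrated is therefore $\sigma$-finite.

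The crux is \ref{rectif_equiv3} $\Rightarrow$ \ref{rectif_equiv1}. Assume $\eta$ is concentrated on a $\sigma$-finite set $A$, so that Proposition \ref{intens_rectif} applies and yields the explicit formula $i_\eta = m_\eta \haus^1_{\mres A}$. To conclude that $\eta$ is concentrated on $D_\eta$, i.e.\ that $i_\eta(D_\eta^c) = 0$, the key observation is that $m_\eta$ vanishes off $D_\eta$. Indeed $m(x,\gamma) \geq 1$ if and only if $x \in \gamma$, so for fixed $x$ the condition $\theta_\eta(x) = 0$ forces $x \notin \gamma$ for $\eta$-a.e.\ $\gamma$, whence $m(x,\gamma) = 0$ for $\eta$-a.e.\ $\gamma$ and therefore $m_\eta(x) = 0$. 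Thus $D_\eta^c = \{\theta_\eta = 0\} \subseteq \{m_\eta = 0\}$, and integrating the formula above gives $i_\eta(D_\eta^c) = \int_{D_\eta^c} m_\eta \diff \haus^1_{\mres A} = 0$, which is exactly the assertion that $\eta$ is concentrated on $D_\eta$.

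The main obstacle is really hidden inside Proposition \ref{intens_rectif}: without the $\sigma$-finiteness of $A$ one could not invoke Fubini--Tonelli to produce the identity $i_\eta = m_\eta \haus^1_{\mres A}$, and it is exactly this identity, combined with the elementary comparison between $m_\eta$ and $\theta_\eta$, that closes the loop. Once it is available, everything reduces to unwinding the definitions of concentration, rectifiability and $\sigma$-finiteness.
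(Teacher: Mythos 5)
Your proof is correct and follows essentially the same route as the paper: the implications \ref{rectif_equiv1} $\Rightarrow$ \ref{rectif_equiv2} $\Rightarrow$ \ref{rectif_equiv3} are dispatched via Proposition \ref{rectif_dom} and the elementary $\sigma$-finiteness of rectifiable sets, and the substantive implication \ref{rectif_equiv3} $\Rightarrow$ \ref{rectif_equiv1} is obtained exactly as in the paper, by invoking Proposition \ref{intens_rectif} to write $i_\eta = m_\eta \haus^1_{\mres A}$ and observing that $\{m_\eta > 0\} = D_\eta$. Your write-up merely makes explicit the details the paper leaves to the reader (why $m_\eta$ vanishes off $D_\eta$, and why Lipschitz images are $\sigma$-finite), which is fine.
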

\begin{proof}
It is enough to prove $\ref{rectif_equiv3} \Rightarrow \ref{rectif_equiv1}$ by the previous proposition. If $\eta$ is concentrated on a $\sigma$-finite set $A$, we know by Proposition \ref{intens_rectif} that $i_\eta = m_\eta \haus^1_{\mres A}$. Therefore $i_\eta$ is also concentrated on $\{x : m_\eta(x) > 0\} = D_\eta$.
\end{proof}
\begin{rem}\label{intens_rectif_rem}
From this and Proposition \ref{intens_rectif} we get that $i_\eta = m_\eta \haus^1$ if $\eta$ is rectifiable.
\end{rem}

The most important consequence of Proposition \ref{rectif_dom} is the following rectifiability result.

\begin{thm}[Rectifiability]\label{rectif_thm}
If $\eta$ has finite $\alpha$-cost with $\alpha \in [0,1[$, it is rectifiable.
\end{thm}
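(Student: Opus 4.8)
The plan is to reduce rectifiability to concentration on the domain, and then extract that concentration from the finiteness of the cost.

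First, by Corollary~\ref{rectif_equiv} it suffices to prove that $\eta$ is concentrated on its domain $D_\eta$, \ie that $\haus^1(\gamma \setminus D_\eta) = 0$ for $\eta$-a.e.\ $\gamma$. Indeed $D_\eta$ is already $1$-rectifiable by Proposition~\ref{rectif_dom}, so concentration on $D_\eta$ immediately yields rectifiability. This turns the theorem into a statement about individual curves, to be recovered from the integral bound $\mathbf{I}_\alpha(\eta) < \infty$.

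The key mechanism is that, since $\alpha < 1$, the conventions fixed in the definition of $\mathbf{I}_\alpha$ give $\theta_\eta(x)^{\alpha-1} = +\infty$ exactly on $\{\theta_\eta = 0\} = K \setminus D_\eta$. Writing $\mathbf{I}_\alpha(\eta) = \int_\Gamma Z_{\alpha,\eta}(\gamma)\,\eta(\diff\gamma)$, finiteness of the total cost forces $Z_{\alpha,\eta}(\gamma) < \infty$ for $\eta$-a.e.\ $\gamma$. I would then show that any curve with $Z_{\alpha,\eta}(\gamma) < \infty$ satisfies $\haus^1(\gamma \setminus D_\eta) = 0$. Since both $Z_{\alpha,\eta}$ and $D_\eta$ depend only on the trajectory, I may assume $\gamma$ is parameterized by arc length, so that $Z_{\alpha,\eta}(\gamma) = \int_0^{T(\gamma)} \theta_\eta(\gamma(t))^{\alpha-1}\diff t$. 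If $\haus^1(\gamma \setminus D_\eta) > 0$, then the parameter set $E = \{t \in [0,T(\gamma)[\, : \theta_\eta(\gamma(t)) = 0\}$ has positive Lebesgue measure: otherwise $\haus^1(\gamma \setminus D_\eta) \leq \haus^1(\gamma(E)) \leq \abs*{E} = 0$, using that a unit-speed Lipschitz map does not increase $\haus^1$. But on $E$ the integrand equals $+\infty$, so $Z_{\alpha,\eta}(\gamma) = +\infty$, a contradiction.

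Taking the contrapositive, $\eta$-a.e.\ curve satisfies $\haus^1(\gamma \setminus D_\eta) = 0$, which is precisely concentration on $D_\eta$, and Corollary~\ref{rectif_equiv} concludes. The point requiring the most care is the joint handling of the two conventions $0^{\alpha-1} = \infty$ and $\infty \times 0 = 0$ together with the arc-length reduction: one must verify that a positive-length sojourn of $\gamma$ in $\{\theta_\eta = 0\}$ genuinely produces an infinite contribution, and here the strict inequality $\alpha < 1$ is essential (for $\alpha = 1$ the integrand is bounded and the statement fails). Alternatively, the same conclusion follows without normalizing by invoking the area formula to write $Z_{\alpha,\eta}(\gamma) = \int_{\R^d} \theta_\eta(x)^{\alpha-1} m(x,\gamma)\,\haus^1(\diff x)$ and noting that $m(\cdot,\gamma) \geq 1$ on the trajectory, so that the factor $\abs*{\dot\gamma}$ plays no role in the part of the integral supported on $\gamma \setminus D_\eta$.
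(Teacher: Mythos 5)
Your proof is correct and takes essentially the same route as the paper: reduce, via Corollary \ref{rectif_equiv} and Proposition \ref{rectif_dom}, to showing that $\eta$ is concentrated on $D_\eta$, then extract this from $\mathbf{I}_\alpha(\eta) < \infty$, since $\alpha - 1 < 0$ makes the integrand infinite wherever $\theta_\eta = 0$. In fact your closing ``alternative'' (the area formula with $m(\cdot,\gamma) \geq 1$) is exactly the paper's one-line argument, and your arc-length version merely makes the same null-set bookkeeping explicit on the parameter side.
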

\begin{proof}
Because of the previous statement, we need only show that $\eta$ is concentrated on $D_\eta$. We have $\mathbf{I}_\alpha(\eta) \doteq \int_\Gamma \int_\gamma \abs*{x}_\eta^{\alpha -1} \abs*{\diff x} \eta(\diff \gamma) < \infty$ hence for $\eta$-almost every curve $\gamma$, for $\haus^1$-almost every $x$ in $\gamma$, $\abs*{x}_\eta^{\alpha -1} < \infty$, which implies that $\abs*{x}_\eta > 0$ \ie $x\in D_\eta$. By definition, it means that $\eta$ is concentrated on $D_\eta$.
\end{proof}

\subsection{Proof of the energy formula}

We define the Gilbert Energy $\mathbf{E}_\alpha : \mathrm{IP}(K) \to [0,\infty]$ as
\[\mathbf{E}_\alpha(\eta) =
\begin{dcases*}
\int_K \theta_\eta^\alpha(x) \haus^1(\diff x) & if $\eta$ is rectifiable,\\
+\infty& otherwise.
\end{dcases*}\]
and a variant $\bar{\mathbf{E}}_\alpha : \mathrm{IP}(K) \to [0,\infty]$ (kind of a \enquote{full} energy)
\[\bar{\mathbf{E}}_\alpha(\eta) =
\begin{dcases*}
\int_K \theta_\eta^{\alpha-1}(x) m_\eta(x) \haus^1(\diff x) & if $\eta$ is rectifiable,\\
+\infty& otherwise.
\end{dcases*}\]
Assuming $\alpha \in [0,1[$, we would like to establish the energy formula
\begin{equation}\tag{EF}
\mathbf{I}_\alpha(\eta) = \mathbf{E}_\alpha(\eta).
\end{equation}
This does not hold in general. Actually we are going to show that $\mathbf{I}_\alpha(\eta) = \bar{\mathbf{E}}_\alpha(\eta)$ for all irrigation plan $\eta \in \mathrm{IP}(K)$ and that $\bar{\mathbf{E}}_\alpha(\eta) = \mathbf{E}_\alpha(\eta)$ provided $\eta$ is essentially simple.

\begin{thm}[Energy formula]\label{thm:ener_form}
Assuming $\alpha \in [0,1[$, the following formula holds:
\begin{equation}\tag{EF'}\label{ener_form_bis}
\mathbf{I}_\alpha(\eta) = \bar{\mathbf{E}}_\alpha(\eta).
\end{equation}
Moreover, if $\eta$ is essentially simple this rewrites
\begin{equation}\tag{EF}
\mathbf{I}_\alpha(\eta) = \mathbf{E}_\alpha(\eta).
\end{equation}
\end{thm}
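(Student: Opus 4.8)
The plan is to prove the two identities separately: first the unconditional formula \eqref{ener_form_bis} relating $\mathbf{I}_\alpha$ to the \enquote{full} energy $\bar{\mathbf{E}}_\alpha$, and then the reduction $\bar{\mathbf{E}}_\alpha(\eta) = \mathbf{E}_\alpha(\eta)$ under essential simplicity. In both steps the non-rectifiable situation is disposed of immediately: if $\eta$ is not rectifiable then $\bar{\mathbf{E}}_\alpha(\eta) = \mathbf{E}_\alpha(\eta) = +\infty$ by definition, while the contrapositive of Theorem \ref{rectif_thm} forces $\mathbf{I}_\alpha(\eta) = +\infty$ as well, so all quantities agree. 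Hence I may assume $\eta$ rectifiable, so that by Corollary \ref{rectif_equiv} it is concentrated on the $1$-rectifiable — in particular $\sigma$-finite — domain $D_\eta$, its trajectories lying $\haus^1$-a.e. in $D_\eta$.

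For \eqref{ener_form_bis} in the rectifiable case I would start from the curve-by-curve coarea formula already exploited in Proposition \ref{intens_rectif}, namely $\int_\gamma \phi(x)\abs*{\diff x} = \int_{\R^d} \phi(x)\, m(x,\gamma)\, \haus^1(\diff x)$, valid for any nonnegative Borel $\phi$, applied here with $\phi = \abs*{\cdot}_\eta^{\alpha-1}$. Integrating against $\eta$ gives
\[\mathbf{I}_\alpha(\eta) = \int_\Gamma \int_{\R^d} \abs*{x}_\eta^{\alpha-1}\, m(x,\gamma)\, \haus^1(\diff x)\, \eta(\diff\gamma),\]
and since the integrand is nonnegative and $\eta$ is concentrated on the $\sigma$-finite set $D_\eta$, Fubini–Tonelli (restricting the $x$-integration to $D_\eta$, where $\haus^1$ is $\sigma$-finite) lets me swap the order and recognize $\int_\Gamma m(x,\gamma)\,\eta(\diff\gamma) = m_\eta(x)$, yielding exactly $\bar{\mathbf{E}}_\alpha(\eta)$. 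The conventions cause no trouble: on $D_\eta$ one has $\abs*{x}_\eta > 0$ so $\abs*{x}_\eta^{\alpha-1}$ is finite, while off $D_\eta$ one has $m_\eta = 0$ (since $\abs*{x}_\eta \le m_\eta(x)$ and $\{m_\eta > 0\} = D_\eta$), so that factor annihilates the integrand under the rule $\infty \times 0 = 0$.

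For the passage to $\mathbf{E}_\alpha$ under essential simplicity it suffices to show $m_\eta(x) = \abs*{x}_\eta$ for $\haus^1$-a.e. $x \in D_\eta$, since then $\abs*{x}_\eta^{\alpha-1} m_\eta(x) = \abs*{x}_\eta^\alpha$ a.e. on $D_\eta$. The clean way is to integrate the nonnegative difference: by Tonelli,
\[\int_{D_\eta}\bigl(m_\eta(x) - \abs*{x}_\eta\bigr)\haus^1(\diff x) = \int_\Gamma \int_{D_\eta}\bigl(m(x,\gamma) - \indic_{x\in\gamma}\bigr)\haus^1(\diff x)\,\eta(\diff\gamma),\]
and for $\eta$-a.e. $\gamma$ essential simplicity gives $m(x,\gamma) = 1 = \indic_{x\in\gamma}$ for $\haus^1$-a.e. $x \in \gamma$, so the inner integral vanishes. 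As $m_\eta \ge \abs*{\cdot}_\eta$ pointwise, a nonnegative function of vanishing integral is a.e. zero, which gives the claim. I expect the main point requiring care to be the justification of the coarea formula and of Fubini–Tonelli, both of which demand precisely the $\sigma$-finiteness that rectifiability supplies; once $\eta$ is known to be rectifiable (or else both sides are infinite), the remainder is bookkeeping with the value conventions, the genuine analytic content having been front-loaded into Theorem \ref{rectif_thm} and the coarea identity.
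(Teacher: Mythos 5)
Your proof is correct and follows essentially the same route as the paper's: dispose of the non-rectifiable case via Theorem \ref{rectif_thm}, then in the rectifiable case combine the curve-by-curve coarea formula with Fubini--Tonelli on the $\sigma$-finite domain $D_\eta$ to get $\mathbf{I}_\alpha(\eta) = \bar{\mathbf{E}}_\alpha(\eta)$, and finally use $m_\eta = \theta_\eta$ $\haus^1$-a.e.\ under essential simplicity. If anything, your Tonelli argument justifying $m_\eta = \theta_\eta$ $\haus^1$-a.e.\ (integrating the nonnegative difference $m(x,\gamma) - \indic_{x\in\gamma}$) is spelled out more carefully than in the paper, which merely asserts this equality, and your explicit handling of the conventions off $D_\eta$ is a welcome extra precision.
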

\begin{proof}
By Theorem \ref{rectif_thm}, if $\eta$ is not rectifiable then $\mathbf{I}_\alpha(\eta) = \mathbf{E}_\alpha(\eta) = \bar{\mathbf{E}}_\alpha(\eta) = \infty$ and the result is clear. Now we assume that $\eta$ is rectifiable, which means that it is concentrated on the rectifiable set $D_\eta$, according to Theorem \ref{rectif_thm} and Corollary \ref{rectif_equiv}. Notice that by the coarea formula we have
\[\int_\Gamma \int_\gamma \abs*{x}_\eta^{\alpha -1} \abs*{\diff x} \eta(\diff\gamma)= \int_\Gamma \int_K \abs*{x}_\eta^{\alpha -1} m(x,\gamma) \haus^1(\diff x) \eta(\diff \gamma),\]
thus the goal is to reverse the order of integration. Here Fubini-Tonelli's theorem applies because $\eta$ is concentrated on its domain, which is rectifiable, which yields
\begin{align*}
\mathbf{I}_\alpha(\eta) &= \int_\Gamma \int_{D_\eta} \abs*{x}_\eta^{\alpha -1} m(x,\gamma) \haus^1(\diff x) \eta(\diff \gamma)\\
 &= \int_{D_\eta} \abs*{x}_\eta^{\alpha -1} m_\eta(x) \haus^1(\diff x)\\
 &= \int_K \theta_\eta^{\alpha -1}(x) m_\eta(x) \haus^1(\diff x) = \bar{\mathbf{E}}_\alpha(\eta).
\end{align*}
and \eqref{ener_form_bis} holds. Now if $\eta$ is essentially simple then in all the previous calculations $m_\eta(x) = \theta_\eta(x)$ so that
\begin{align*}
\mathbf{I}_\alpha(\eta) &= \int_{D_\eta} \int_\Gamma \theta_\eta^{\alpha -1}(x) \theta_\gamma(x) \eta(\diff \gamma) \haus^1(\diff x)\\
 &= \int_K \theta_\eta^\alpha(x) \haus^1(\diff x) = \mathbf{E}_\alpha(\eta),
\end{align*}
thus getting \eqref{ener_form}.
\end{proof}
\begin{rem}
Actually, the proof shows that the equality $\mathbf{I}_\alpha(\eta) = \bar{\mathbf{E}}_\alpha(\eta)$ (and $\mathbf{I}_\alpha(\eta) = \mathbf{E}_\alpha(\eta)$ if $\eta$ is essentially simple) holds also for $\alpha = 1$ provided $\eta$ is rectifiable. However, one may find $\eta$ non-rectifiable such that $\mathbf{I}_1(\eta) \in ]0,\infty[$ while $\haus^1(D_\eta) = 0$. In that case one has $0 = \int_K m_\eta(x) \haus^1(\diff x) < \mathbf{I}_1(\eta) < \bar{\mathbf{E}}_1(\eta) = \infty$. Notice also that $0 = \int_K \theta_\eta^{\alpha -1}(x) m_\eta(x) \haus^1(\diff x) < \mathbf{I}_\alpha(\eta) = \infty$ for $\alpha \in [0,1[$, which explains why we imposed $\mathbf{E}_\alpha(\eta) = \bar{\mathbf{E}}_\alpha(\eta) = \infty$ if $\eta$ is not rectifiable.
\end{rem}

\subsection{Optimal irrigation plans are simple}

In this section we shall prove that optimal irrigation plans are necessary simple using the energy formula.

\subsubsection*{\enquote{Reduced} intensity} We associate to any irrigation plan $\eta \in \mathrm{IP}(K)$ a \enquote{reduced} intensity $j_\eta$ by
\[\langle j_\eta, \phi \rangle = \int_\Gamma \int_\gamma \phi(x) \haus^1(\diff x) \eta(\diff\gamma),\]
for all $\phi \in \calC(K)$. It is a positive finite measure, since the total mass is
\[\norm*{j_\eta} = \int_\Gamma \haus^1(\gamma) \eta(\diff \gamma) \leq \int_\Gamma L(\gamma) \eta(\diff \gamma) = \mathbf{L}(\eta) < \infty.\]
\begin{rem}\label{reduc_intens_rem}
Notice that if $A$ is a Borel set, $j_\eta(A) = 0 \Leftrightarrow i_\eta(A) = 0$ hence by definition $\eta$ is rectifiable if and only if $j_\eta$ is concentrated on a rectifiable set, in which case it is concentrated on the rectifiable domain $D_\eta$ and one has $j_\eta = \theta_\eta \haus^1$ using Fubini-Tonelli's theorem. 
\end{rem}

\begin{lem}[Simple replacement]\label{simple_rep}
Let $\eta \in \mathrm{IP}(\mu,\nu)$ be an irrigation plan. Consider the minimization problem
\begin{equation}\label{lf_pb}\tag{$\text{LEN}_\eta$}
\min\quad \quad \mathbf{L}(\zeta) \;  : \; j_\zeta \leq j_\eta \text{ and } \zeta \in \mathrm{IP}(\mu,\nu).
\end{equation}
Then
\begin{enumerate}[label=(\roman*)]
\item\label{simple_rep1} this problem admits minimizers which are all simple,
\item\label{simple_rep2} if $\eta$ is rectifiable, all minimizers $\zeta$ are also rectifiable and $j_\zeta \leq j_\eta$ rewrites
\begin{equation}\label{theta_ineq}
\theta_\zeta \leq \theta_\eta \quad \haus^1\text{-almost everywhere}.
\end{equation}
\end{enumerate}
Any minimizer of \eqref{lf_pb} is called a \emph{simple replacement} of $\eta$.
\end{lem}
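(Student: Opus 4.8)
The plan is to obtain existence by the direct method of the calculus of variations and then to characterize the minimizers through a loop-erasing argument. The feasible set $\mathcal{F} \coloneqq \{\zeta \in \mathrm{IP}(\mu,\nu) : j_\zeta \leq j_\eta\}$ is nonempty since $\eta \in \mathcal{F}$. Because $\mathbf{L}$, the two marginals $(\pi_0)_\#$, $(\pi_\infty)_\#$, and the reduced intensity $j_\zeta$ (which only sees the traces of curves) are all invariant under speed normalization, I restrict to normalized plans, for which $\mathbf{T}(\zeta) = \mathbf{L}(\zeta)$. Taking a minimizing sequence $(\zeta_n)$, the bound $\mathbf{L}(\zeta_n) \to \inf\eqref{lf_pb} \leq \mathbf{L}(\eta) < \infty$ places it in some $\mathrm{IP}_C(\mu,\nu)$, which is compact (Remark \ref{IPC_compact_rem}) and whose marginal constraints are closed (Proposition \ref{marg_cont}); I extract $\zeta_n \rightharpoonup \zeta \in \mathrm{IP}_C(\mu,\nu)$. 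Since $L$ is lsc and nonnegative, $\mathbf{L}(\zeta) \leq \liminf_n \mathbf{L}(\zeta_n) = \inf\eqref{lf_pb}$, so it remains only to show $\zeta \in \mathcal{F}$.

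Testing $j_\zeta \leq j_\eta$ against $\phi \in \calC(K)$ with $\phi \geq 0$, it suffices to prove that $\zeta \mapsto \langle j_\zeta, \phi \rangle = \int_\Gamma G_\phi(\gamma)\,\zeta(\diff\gamma)$ is lsc, where $G_\phi(\gamma) \coloneqq \int_\gamma \phi(x)\,\haus^1(\diff x)$ is the integral of $\phi$ over the trace of $\gamma$; indeed one would then get $\langle j_\zeta, \phi \rangle \leq \liminf_n \langle j_{\zeta_n}, \phi \rangle \leq \langle j_\eta, \phi \rangle$. As $G_\phi \geq 0$, lower semicontinuity of the integral reduces to lower semicontinuity of $G_\phi$ on $\Gamma$. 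This is the heart of the matter: uniform convergence on compacts makes the traces $\gamma_n([0,M])$ converge to $\gamma([0,M])$ in Hausdorff distance, and these are continua, so a (weighted) Golab-type semicontinuity theorem yields $G_\phi(\gamma) \leq \liminf_n G_\phi(\gamma_n)$ after letting $M \to \infty$, the tail being controlled by the tightness of $\mathrm{IP}_C(K)$ (Lemma \ref{tight_lem}). I expect establishing this weighted Golab semicontinuity, together with the Borel measurability of $G_\phi$, to be the main technical obstacle of the existence part.

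For item \ref{simple_rep1}, I would build a Borel simplification (loop-erasing) map $\sigma : \Gamma \to \Gamma$ such that $\sigma(\gamma)$ is simple, has trace contained in that of $\gamma$, shares the same endpoints, and satisfies $L(\sigma(\gamma)) \leq L(\gamma)$ with strict inequality whenever the normalized curve $\gamma$ is not simple --- a self-intersection of an arc-length curve being precisely a loop of positive length. Then for any minimizer $\zeta$, taken normalized, the plan $\sigma_\#\zeta$ lies in $\mathcal{F}$ because erasing loops only shrinks traces, so $j_{\sigma_\#\zeta} \leq j_\zeta \leq j_\eta$, and preserves endpoints, so the marginals are kept, while $\mathbf{L}(\sigma_\#\zeta) \leq \mathbf{L}(\zeta)$. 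Minimality forces equality, hence $L(\sigma(\gamma)) = L(\gamma)$ for $\zeta$-a.e.\ $\gamma$, which by the strict-decrease property means that $\zeta$ is concentrated on simple curves. Constructing $\sigma$ measurably is the only delicate point here.

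Finally, for item \ref{simple_rep2}, if $\eta$ is rectifiable then $j_\eta = \theta_\eta\,\haus^1$ by Remark \ref{reduc_intens_rem}, so any minimizer $\zeta$ satisfies $j_\zeta \leq \theta_\eta\,\haus^1$, a measure concentrated on the rectifiable set $D_\eta$. Hence $j_\zeta$ is concentrated on a rectifiable set, which by Remark \ref{reduc_intens_rem} means that $\zeta$ is rectifiable and $j_\zeta = \theta_\zeta\,\haus^1$. The constraint $j_\zeta \leq j_\eta$ then reads $\theta_\zeta\,\haus^1 \leq \theta_\eta\,\haus^1$, \ie $\theta_\zeta \leq \theta_\eta$ $\haus^1$-almost everywhere, which is exactly \eqref{theta_ineq}.
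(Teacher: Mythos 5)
Your proposal is correct, and its skeleton is the same as the paper's: direct method in $\mathrm{IP}_C(\mu,\nu)$ with lower semicontinuity of $\mathbf{L}$, a Golab-type semicontinuity result to pass the constraint $j_\zeta \leq j_\eta$ to the limit, a loop-removal map pushed forward by $\#$ to force simplicity, and Remark \ref{reduc_intens_rem} for item \ref{simple_rep2}, which you argue exactly as the paper does. Two divergences are worth noting. First, for the closedness of the constraint, the paper tests on open sets $O$ and invokes the generalized Golab theorem of \cite{Brancolini2010} (lower semicontinuity of $\gamma \mapsto \haus^1(\gamma\cap O)$ under uniform convergence on compact sets), concluding by regularity of finite measures; the \enquote{weighted Golab} inequality for $G_\phi(\gamma)=\int_\gamma \phi \,\haus^1(\diff x)$ that you flag as the main technical obstacle is not an extra difficulty, since it follows from the open-set version via the layer-cake formula $G_\phi(\gamma)=\int_0^\infty \haus^1\bigl(\gamma\cap\{\phi>t\}\bigr)\,\diff t$ and Fatou's lemma (also, no tightness is needed there: monotone convergence in the truncation parameter $M$ suffices). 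Second, and more substantively, for item \ref{simple_rep1} the paper does \emph{not} build a full simplification map: it argues by contradiction, removing from each curve of a positive-$\zeta$-measure set a \emph{single} loop of maximal length, which strictly decreases $\mathbf{L}$ for normalized curves while shrinking traces and preserving endpoints. Your map $\sigma$ must output a genuinely simple curve, which requires the existence of a simple arc joining $\gamma(0)$ to $\gamma(\infty)$ inside the trace (arcwise connectedness of Peano continua --- classical, but a real theorem, not merely the measurability issue you point to) together with the strict-decrease property when $\gamma$ is not simple; the latter is most easily obtained by first deleting one loop and then selecting a simple arc in the reduced trace, i.e.\ by going through the paper's surgery anyway. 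So the paper's single-loop removal buys elementarity, while your full loop-erasure buys a cleaner mechanism (minimizers are $\zeta$-a.e.\ fixed points of $\sigma$) at the cost of heavier continuum-theoretic input; in both cases the Borel measurability of the surgery map is asserted rather than proved, and in both cases what is really established is simplicity of \emph{normalized} minimizers, since a minimizer whose curves pause at points is feasible, of the same length, yet not simple in the sense of the paper.
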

\begin{proof}
Let us call $m$ the infimum of \eqref{lf_pb} and show that it admits a minimizer. Take a minimizing sequence $(\zeta_n)_n$ such that every $\zeta_n$ is normalized, in particuliar $\zeta_n \in \mathrm{IP}_C(K)$ for some $C > 0$. Up to extraction  we have convergence $\zeta_n \rightharpoonup \zeta$, and since $\mathrm{IP}_C(\mu,\nu)$ is closed by Proposition \ref{marg_cont}, $\zeta \in \mathrm{IP}_C(\mu,\nu)$. Moreover $\mathbf{L}(\zeta) = m$ by lower semicontinuity of $\mathbf{L} \doteq \mathbf{I}_1$ on $\mathrm{IP}_C(K)$, which we proved in Corollary \ref{lag_lsc}. Now in order to show that $\zeta$ is a solution of \eqref{lf_pb} we only have to check the last constraint $j_\zeta \leq j_\eta$. Take any open set $O$. One has
\[j_{\zeta_n}(O) = \int_\Gamma \haus^1(\gamma \cap O) \zeta_n(\diff \gamma).\]
By a generalization of Golab's Theorem (see \cite{Brancolini2010}), the following holds
\[\haus^1(\gamma\cap O) \leq \liminf_n \haus^1(\gamma_n \cap O)\]
if $\gamma_n \to \gamma$ uniformly on compact sets, which means that $\gamma \mapsto \haus^1(\gamma \cap O)$ is lower semicontinuous on $\Gamma$. Consequently $\zeta \mapsto \int_\Gamma \haus^1(\gamma \cap O) \zeta(\diff \gamma)$ is lower semicontinuous and one gets
\[j_\zeta(O) \doteq \int_\Gamma \haus^1(\gamma \cap O) \zeta(\diff \gamma) \leq \liminf_n \int_\Gamma \haus^1(\gamma \cap O) \zeta_n(\diff \gamma) \leq j_\eta(O)\]
for all open set $O$. This implies that $j_\zeta \leq j_\eta$ by regularity of finite measures hence $\zeta$ is a minimizer of \eqref{lf_pb}.

Let us check that any minimizer $\zeta$ is simple. By contradiction, if it was not simple there would be a set $\Gamma' \subseteq \Gamma$ such that $\zeta(\Gamma') > 0$ and every $\gamma \in \Gamma'$ has a loop. One may define a Borel map $r : \gamma \mapsto r(\gamma)$ which removes from $\gamma \in \Gamma'$ the loop with maximal length (the first one in case there are several), and is identical on $\Gamma \setminus \Gamma'$. Then set $\bar{\zeta} \coloneqq r(\zeta)$. Obviously one has $\mathbf{L}(\bar{\zeta}) < \mathbf{L}(\zeta)$, $\bar{\zeta} \in \mathrm{IP}(\mu,\nu)$ and $j_{\bar{\zeta}} \leq j_\zeta$, which contradicts the optimality of $\zeta$ in \eqref{lf_pb}.

Finally, suppose $\eta$ is rectifiable and take $\zeta$ a minimizer of our problem. According to Remark \ref{reduc_intens_rem}, the inequality $j_\zeta \leq j_\eta$ implies that $\zeta$ is rectifiable and $j_\eta = \theta_\eta \haus^1, j_\zeta = \theta_\zeta \haus^1$, which yields \ref{simple_rep2}.
\end{proof}

\begin{prop}
Given $\alpha \in [0,1]$, if $\eta \in \mathrm{IP}(\mu,\nu)$ is optimal with finite $\alpha$-cost, then it is simple.
\end{prop}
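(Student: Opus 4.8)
The plan is to compare $\eta$ against one of its simple replacements and to squeeze the two irrigation costs together by means of the energy formula. Since $\eta$ has finite $\alpha$-cost with $\alpha\in[0,1[$, Theorem~\ref{rectif_thm} ensures it is rectifiable, so I may invoke part~\ref{simple_rep2} of Lemma~\ref{simple_rep} and fix a simple replacement $\zeta\in\mathrm{IP}(\mu,\nu)$: it is simple, rectifiable, and satisfies $\theta_\zeta\le\theta_\eta$ $\haus^1$-almost everywhere. The case $\alpha=1$ will be dispatched separately at the end.

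The heart of the argument is the chain of (in)equalities
\begin{align*}
\mathbf{I}_\alpha(\zeta)=\int_K\theta_\zeta^\alpha\,\haus^1(\diff x) &\le \int_K\theta_\eta^\alpha\,\haus^1(\diff x)=\mathbf{E}_\alpha(\eta)\\
&\le\bar{\mathbf{E}}_\alpha(\eta)=\mathbf{I}_\alpha(\eta)\le\mathbf{I}_\alpha(\zeta).
\end{align*}
The first equality holds because $\zeta$, being simple, is essentially simple, so Theorem~\ref{thm:ener_form} gives $\mathbf{I}_\alpha(\zeta)=\mathbf{E}_\alpha(\zeta)$; the first inequality uses $\theta_\zeta\le\theta_\eta$ and the monotonicity of $t\mapsto t^\alpha$; the middle inequality is $m_\eta\ge\theta_\eta$; the equality $\bar{\mathbf{E}}_\alpha(\eta)=\mathbf{I}_\alpha(\eta)$ is the energy formula \eqref{ener_form_bis}; and the last inequality is the optimality of $\eta$. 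Consequently every inequality above is in fact an equality.

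Two consequences follow. The equality $\mathbf{E}_\alpha(\eta)=\bar{\mathbf{E}}_\alpha(\eta)$ reads $\int_K\theta_\eta^{\alpha-1}(m_\eta-\theta_\eta)\,\haus^1(\diff x)=0$, where the integrand is nonnegative and strictly positive wherever $m_\eta>\theta_\eta$ on $D_\eta$ (there $\theta_\eta^{\alpha-1}>0$); hence $m_\eta=\theta_\eta$ $\haus^1$-a.e., i.e. $\eta$ is essentially simple, and then $\mathbf{L}(\eta)=\int_K m_\eta\,\haus^1(\diff x)=\int_K\theta_\eta\,\haus^1(\diff x)$ using $i_\eta=m_\eta\haus^1$ from Remark~\ref{intens_rectif_rem}. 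The equality $\int_K\theta_\zeta^\alpha\,\haus^1(\diff x)=\int_K\theta_\eta^\alpha\,\haus^1(\diff x)$ together with $\theta_\zeta\le\theta_\eta$ and the \emph{strict} monotonicity of $t\mapsto t^\alpha$ for $\alpha\in\,]0,1[$ forces $\theta_\zeta=\theta_\eta$ $\haus^1$-a.e., whence $\mathbf{L}(\zeta)=\int_K\theta_\zeta\,\haus^1(\diff x)=\int_K\theta_\eta\,\haus^1(\diff x)=\mathbf{L}(\eta)$.

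Since $\eta$ is trivially a competitor in problem~\eqref{lf_pb} (one has $j_\eta\le j_\eta$) and now attains its minimal value $\mathbf{L}(\zeta)$, it is itself a minimizer of $\text{LEN}_\eta$, hence simple by part~\ref{simple_rep1} of Lemma~\ref{simple_rep}. For $\alpha=1$ the same conclusion is immediate and needs no rectifiability: $\mathbf{I}_1=\mathbf{L}$, so any simple replacement obeys $\mathbf{L}(\zeta)\le\mathbf{L}(\eta)=\mathbf{I}_1(\eta)\le\mathbf{I}_1(\zeta)=\mathbf{L}(\zeta)$, again making $\eta$ a length-minimizing competitor in \eqref{lf_pb}, hence simple. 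I expect the genuinely delicate step to be the promotion of essential simplicity to full simplicity, which rests entirely on the strict monotonicity of $t\mapsto t^\alpha$; this is precisely what degenerates at the borderline exponent $\alpha=0$, where equality of the $\mathbf{E}_\alpha$-integrals no longer identifies $\theta_\zeta$ with $\theta_\eta$ and the argument would have to be supplemented by information on the structure of the optimal support.
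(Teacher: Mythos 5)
Your proposal is correct and follows essentially the same route as the paper: compare $\eta$ to a simple replacement via Lemma~\ref{simple_rep}, use the energy formula of Theorem~\ref{thm:ener_form} to get the chain $\mathbf{I}_\alpha(\zeta)=\int_K\theta_\zeta^\alpha\diff\haus^1\le\int_K\theta_\eta^\alpha\diff\haus^1\le\int_K\theta_\eta^{\alpha-1}m_\eta\diff\haus^1=\mathbf{I}_\alpha(\eta)\le\mathbf{I}_\alpha(\zeta)$, turn it into equalities by optimality, deduce $\mathbf{L}(\eta)=\mathbf{L}(\zeta)$, and conclude that $\eta$ minimizes \eqref{lf_pb} and is therefore simple --- this is precisely the paper's argument (read in the opposite direction), including your treatment of $\alpha=1$, which is the paper's ``straightforward'' case spelled out. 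Your caveat at $\alpha=0$ is well taken but is not a defect specific to your write-up: the paper's own proof asserts $\theta_\eta=\theta_{\tilde{\eta}}$ $\haus^1$-a.e.\ from the equality of the two Gilbert energies together with $\theta_{\tilde{\eta}}\le\theta_\eta$, an inference that relies on the strict monotonicity of $t\mapsto t^\alpha$ and hence degenerates at $\alpha=0$ exactly as you describe, so on this borderline point your explicit flag is if anything more careful than the paper.
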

\begin{proof}
The case $\alpha = 1$ is straightforward from Lemma \ref{simple_rep} since $\mathbf{L} = \mathbf{I}_1$. Now we assume that $\alpha < 1$ and take $\eta$ optimal, in which case the finiteness of the $\alpha$-cost implies the rectifiability of $\eta$ by Theorem \ref{rectif_thm}. We need only show that $\eta$ is a minimizer of \eqref{lf_pb}. Take $\tilde{\eta}$ a simple replacement of $\eta$. Then since $\eta,\tilde{\eta}$ are rectifiable and $\theta_{\tilde{\eta}} \leq \theta_\eta$ $\haus^1$-a.e., one has
\[\mathbf{I}_\alpha(\eta) = \int_K \theta_\eta^{\alpha -1} m_\eta \diff\haus^1 \geq \int_K \theta_\eta^\alpha \diff \haus^1 \geq \int_K \theta_{\tilde{\eta}}^\alpha \diff\haus^1 = \mathbf{I}_\alpha(\tilde{\eta})\]
Since $\eta$ is optimal we have equality everywhere, which means that $m_\eta = \theta_\eta = \theta_{\tilde{\eta}} = m_{\tilde{\eta}}$ $\haus^1$-a.e.. Consequently
\[\mathbf{L}(\eta) = \int_K m_\eta(x) \haus^1(\diff x) = \int_K m_{\tilde{\eta}}(x) \haus^1(\diff x) = \mathbf{L}(\tilde{\eta})\]
hence $\eta$ minimizes \eqref{lf_pb} and is as such simple by Lemma \ref{simple_rep}.
\end{proof}

\section{The Eulerian model: irrigation flows}\label{eul_sec}

In this section we present the Eulerian model of branched transport, which was introduced by Xia in \cite{xia2003optimal} as a continuous extension to a discrete model proposed by Gilbert in \cite{Gil}.

\subsection{The discrete model}

\subsubsection*{Oriented Graph} An \emph{oriented graph} in $K$ is a pair $G = (E, w)$ where $E = E(G)$ is a set of oriented segments $(e_1,\ldots,e_n)$ called \emph{edges} and $w : E(G) \to]0,\infty[$ is a function which gives a weight to any edge. An oriented segment $e$ simply consists of an ordered pair of points $(e^-,e^+)$ in $K$ which we call \emph{starting and ending point} of $e$. We denote by $\abs*{e} \coloneqq \abs*{e^+-e^-}$ its length, by $\hat{e} \coloneqq \frac{e^+-e^-}{\abs*{e^+-e^-}} \in \Sp^{d-1}$ its \emph{orientation} provided $e^+ \neq e^-$, and set $\mathrm{G}(K)$ to be the set of oriented graphs on $K$.

\subsubsection*{Irrigation Graphs}
Given two atomic probability measures $\mu = \sum_i \alpha_i \delta_{x_i}$ and $\nu = \sum_i \beta_i \delta_{y_i}$, we say that $G$ irrigates $\nu$ from $\mu$ if it satisfies the so-called Kirchhoff condition, well-known for electric circuits:
\begin{center}
incoming mass at $v$ = outcoming mass at $v$,
\end{center}
for all vertex $v$ of the graph. By \enquote{incoming mass} we mean the total weight of edges with terminating point $v$, increased by $a_i$ if $v = x_i$, and by \enquote{outcoming mass} we mean the total weight of edges with starting point $v$, increased by $b_j$ if $v = y_j$. Indeed $\mu$ and $\nu$ are seen as mass being respectively pushed in and out of the graph. The set of graphs irrigating $\nu$ from $\mu$ is denoted by $\rmG(\mu,\nu)$.

\subsubsection*{Discrete Irrigation Problem}
With a slight abuse, we define the $\alpha$-cost of a graph as
\[\mathbf{E}_\alpha(G) = \sum_{e\in E(G)} w(e)^\alpha \abs*{e},\]
which means that the cost of moving a mass $m$ along a segment of length $l$ is $m^\alpha \cdot l$. Given $\mu, \nu$ two atomic probability measures, we want to minimize the cost of irrigation among all graphs sending $\mu$ to $\nu$, which reads
\begin{equation}\tag{$\text{DI}_\alpha$}
\min_{G\in\mathrm{G}(\mu,\nu)} \quad \mathbf{E}_\alpha(G).
\end{equation}

\subsection{The continuous model}

From now on we assume that $\alpha \in [0,1[$.

\subsubsection*{Irrigation Flow} We call \emph{irrigation flow} on $K$ any vector measure $v \in \calM^d(K)$ such that $\nabla \cdot v \in \calM(K)$, where $\nabla \cdot v$ is the divergence of $v$ in the sense of distribution. We denote by $\mathrm{IF}(K)$ the set of irrigation flows.

\begin{rem}
These objects have several names. They are called $1$-dimensional normal currents in the terminology of Geometric Measure Theory, and are called traffic paths by Xia in \cite{xia2003optimal}.
\end{rem}

If $E \subseteq K$ is an $\haus^1$-measurable set, $\tau : E \to \Sp^{d-1}$ is $\haus^1$-measurable and $\theta : E \to \R_+$ is $\haus^1$-integrable, we define the vector measure $[E, \tau, \theta] \in \calM^d(K)$ by
\[<[E, \tau, \theta], \psi> = \int_E \theta(x) \psi(x) \cdot \tau(x) \haus^1(\diff x),\]
for all $\psi \in \calC(K, \R^d)$. In other terms $[E, \tau, \theta] \doteq \theta \tau \haus^1_{\mres E}$.
\subsubsection*{Rectifiable irrigation flow} Recall that if $E$ is a $1$-rectifiable set, at $\haus^1$-a.e $x\in E$ there is an \emph{approximate tangent line} (see \cite[Chapter 17]{mattila1999geometry}) denoted by $\tang(x,E)$. An irrigation flow of the form $v = [E,\tau,\theta]$ where $E$ is $1$-rectifiable and $\tau(x) \in \tang(x,E)$ for $\haus^1$-a.e. $x \in E$ is termed \emph{rectifiable}.

\subsubsection*{From discrete to continuous} Consider a graph $G \in \mathrm{G}(K)$. One can define the vector measure $v_G$ by
\[v_G \coloneqq \sum_{e\in E(G)} [e, \hat{e}, w(e)].\]
One can check that $\nabla \cdot v_G = \sum_{e\in E(G)} w(e)(\delta_{e^-} - \delta_{e^+}) \in \calM(K)$ and that it is a rectifiable irrigation flow on $K$. Also, both the cost $\mathbf{E}_\alpha$ and the constraint $G \in \mathrm{G}(\mu,\nu)$ can be expressed solely in terms of $v_G$. Indeed if we identify $v_G$ with its $\haus^1$-density, one has
\[\mathbf{E}_\alpha(G) \doteq \sum_{e\in E(G)} w(e)^\alpha \abs*{e} = \int_K \abs*{v_G(x)}^\alpha \haus^1(\diff x).\]
And the Kirchhoff condition is expressed in terms of the divergence $\nabla\cdot v_G$:

\begin{prop}
If $G$ is a graph, $G\in \rmG(\mu,\nu)$ if and only if $\nabla \cdot v_G = \mu - \nu$.
\end{prop}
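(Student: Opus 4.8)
The plan is to verify the equivalence by directly computing the distributional divergence of $v_G$ and comparing it to the Kirchhoff condition encoded in $G \in \rmG(\mu,\nu)$. First I would recall that by the construction above, the divergence is already known to be $\nabla \cdot v_G = \sum_{e \in E(G)} w(e)(\delta_{e^-} - \delta_{e^+})$. The key observation is that this sum can be regrouped vertex by vertex: for each vertex $v$ appearing in the graph, collecting all the Dirac masses at $v$ gives a coefficient equal to (total weight of edges starting at $v$) minus (total weight of edges ending at $v$). Thus I would write
\[
\nabla \cdot v_G = \sum_{v} \left( \sum_{e : e^- = v} w(e) - \sum_{e : e^+ = v} w(e) \right) \delta_v,
\]
where the outer sum ranges over all vertices $v$ of $G$.

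Next I would unwind the definition of the Kirchhoff condition given earlier. Writing $\mu = \sum_i \alpha_i \delta_{x_i}$ and $\nu = \sum_j \beta_j \delta_{y_j}$, the condition $G \in \rmG(\mu,\nu)$ states that for every vertex $v$, the incoming mass equals the outcoming mass, that is
\[
\sum_{e : e^+ = v} w(e) + \sum_{i : x_i = v} \alpha_i = \sum_{e : e^- = v} w(e) + \sum_{j : y_j = v} \beta_j.
\]
Rearranging this, for every vertex $v$ one has
\[
\sum_{e : e^- = v} w(e) - \sum_{e : e^+ = v} w(e) = \sum_{i : x_i = v} \alpha_i - \sum_{j : y_j = v} \beta_j,
\]
whose right-hand side is exactly the coefficient of $\delta_v$ in $\mu - \nu$. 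Summing over all vertices $v$ shows that the Kirchhoff condition holding at every vertex is equivalent to the identity $\nabla \cdot v_G = \mu - \nu$ as measures.

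The only subtlety, and the point I would be careful about, is making sure the two sides are compared over the correct collection of atoms: the measure $\nabla \cdot v_G$ is supported on the vertices of $G$, while $\mu - \nu$ is supported on the points $x_i$ and $y_j$. I would argue that any atom of $\mu - \nu$ sits at a vertex of $G$ (otherwise Kirchhoff cannot be satisfied there, as no edge would balance the injected mass), so that testing against Dirac masses at each vertex captures the full equality. Since a finite signed atomic measure is determined by its coefficients at each atom, matching the coefficient of $\delta_v$ for every vertex $v$ is exactly equivalent to the equality of measures $\nabla \cdot v_G = \mu - \nu$. This reduces the whole statement to an elementary bookkeeping of weights at each vertex, with no analytic difficulty.
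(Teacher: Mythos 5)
Your proof is correct, and it is essentially the paper's (implicit) argument: the paper states this proposition without proof, as an immediate consequence of the displayed formula $\nabla \cdot v_G = \sum_{e\in E(G)} w(e)(\delta_{e^-} - \delta_{e^+})$, and your vertex-by-vertex regrouping plus the rearrangement of the Kirchhoff balance is exactly the omitted bookkeeping. Your closing remark is also the right point of care: since the paper's definition quantifies Kirchhoff only over vertices of $G$, one must indeed adopt the reading in which the atoms $x_i$, $y_j$ count among the points where the balance is tested (so that an atom of $\mu-\nu$ away from all edges violates the condition), as otherwise a graph with no edges would vacuously lie in $\rmG(\mu,\nu)$ and the ``only if'' direction would fail.
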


This leads to defining the following cost on $\mathrm{IF}(K)$:
\[\mathbf{M}_\alpha(v) = \begin{dcases*}
\int \abs*{v(x)}^\alpha \haus^1(dx) & if $v$ is rectifiable,\\
+\infty& otherwise,
\end{dcases*}\]
which is called the $\alpha$-mass of $v$. Actually, Xia gave a different definition of $\mathbf{M}_\alpha$ in \cite{xia2003optimal}, as a relaxation of the $\mathbf{E}_\alpha$ functional:
\[\mathbf{M}_\alpha(v) = \inf_{v_{G_n} \xrightharpoonup{\mathbf{N}} v} \liminf_n \mathbf{E}_\alpha(G_n),\]
where $v_n \xrightharpoonup{\mathbf{N}} v$ means that $v_n \rightharpoonup v$ and $\nabla \cdot v_n \rightharpoonup \nabla \cdot v$ weakly-$\star$ as measures on $K$. These definitions coincide on $\mathrm{IF}(K)$ as shown in \cite{xia2004interior}, and $\mathbf{M}_\alpha(v) = \int \abs{v(x)}^\alpha \haus^1(\diff x)$ as soon as $v$ has an $\haus^1$-density. Finally, we say that $v$ sends $\mu$ to $\nu$ if $\nabla \cdot v = \mu - \nu$ and denote by $\mathrm{IF}(\mu,\nu)$ the set of such irrigation flows.

\subsubsection*{Eulerian irrigation problem} We are now able to formulate an Eulerian irrigation problem in a continuous setting. Given two probability measures $\mu,\nu \in \prob(K)$, we want to find an irrigation flow $v$ sending $\mu$ to $\nu$ which has minimal $\alpha$-mass. This reads
\begin{equation}\tag{$\text{EI}_\alpha$}\label{eul_pb}
\min_{v\in\mathrm{IF}(\mu,\nu)} \quad \mathbf{M}_\alpha(v).
\end{equation}
Xia proved the following theorem in \cite{xia2003optimal}.

\begin{thm}[Existence of minimizers]
For any $\mu,\nu \in \prob(K)$, there is a minimizer $v$ to the problem \eqref{eul_pb}. Moreover if $1 - \frac{1}{d} < \alpha < 1$ the minimum is always finite.
\end{thm}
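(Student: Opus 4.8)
The plan is to run the direct method of the calculus of variations in the space of normal $1$-currents $\mathrm{IF}(\mu,\nu)$, mirroring the argument used for the Lagrangian problem. If the infimum in \eqref{eul_pb} is $+\infty$ there is nothing to prove, so assume it is finite and fix a minimizing sequence $(v_n)$ with $\mathbf{M}_\alpha(v_n) \le \Lambda < \infty$ and $\nabla\cdot v_n = \mu-\nu$. Three ingredients are needed: compactness of the relevant sublevel set, lower semicontinuity of $\mathbf{M}_\alpha$, and stability of the constraint. The latter two are cheap. The constraint is stable because $\mathbf{N}$-convergence is designed to preserve divergences: if $v_n \xrightharpoonup{\mathbf{N}} v$ then $\nabla\cdot v_n \rightharpoonup \nabla\cdot v$, so $\nabla\cdot v = \mu-\nu$ passes to the limit. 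Lower semicontinuity is built into the relaxed definition $\mathbf{M}_\alpha(v) = \inf_{v_{G_n}\xrightharpoonup{\mathbf{N}} v}\liminf_n \mathbf{E}_\alpha(G_n)$: as a lower-semicontinuous envelope for $\mathbf{N}$-convergence, $\mathbf{M}_\alpha$ is automatically lsc for that convergence, so $\mathbf{M}_\alpha(v) \le \liminf_n \mathbf{M}_\alpha(v_n) = \inf\eqref{eul_pb}$, making the limit a minimizer as soon as compactness is established.

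The crux is therefore compactness, which here reduces to a single uniform bound $\sup_n \abs*{v_n}(K) < \infty$ on the total variations: granting it, Banach--Alaoglu (exactly as in the proof of the Lagrangian existence theorem) extracts a weak-$\star$ limit $v_n \rightharpoonup v$, the divergences $\nabla\cdot v_n = \mu-\nu$ trivially converge, hence $v_n \xrightharpoonup{\mathbf{N}} v$ and $v$ minimizes \eqref{eul_pb}. The difficulty is that a bound on $\mathbf{M}_\alpha$ does \emph{not} by itself control $\abs*{v_n}(K)$, since a large density concentrated on short length carries small $\alpha$-mass but large mass. I expect this mass bound to be the main obstacle. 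The standard resolution, due to Xia, is to reduce to \emph{acyclic} flows: by Smirnov's decomposition $v_n$ splits into a cyclic and an acyclic part with no cancellation, so that the acyclic part has the same divergence, no larger total variation, and $\mathbf{M}_\alpha$ no larger (the pointwise density only decreases and $t\mapsto t^\alpha$ is increasing); one may thus assume each $v_n$ is acyclic, and Xia's compactness lemma then furnishes the required uniform bound on $\abs*{v_n}(K)$ in terms of $\Lambda$, $\mathrm{diam}(K)$ and the transported mass.

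For the finiteness assertion when $1-\frac{1}{d}<\alpha<1$, I would exhibit an explicit finite-energy competitor by a dyadic construction following Xia. Approximate $\mu$ and $\nu$ by atomic measures supported at the centers of the dyadic cubes of side $2^{-k}$, and connect scale $k$ to scale $k+1$ by graphs moving the mass of each cube to its children. At scale $k$ there are $O(2^{kd})$ cubes, the transport distances are $O(2^{-k})$, and by concavity of $t\mapsto t^\alpha$ the sum of $\alpha$-powers of the cube masses is at most $(2^{kd})^{1-\alpha}$, so the scale-$k$ cost is bounded by $O(2^{kd}\cdot 2^{-k}\cdot 2^{-kd\alpha}) = O(2^{-k(1-d(1-\alpha))})$. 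The exponent $1-d(1-\alpha)$ is positive precisely when $\alpha > 1-\frac{1}{d}$, so the geometric series of scale-costs converges; the graphs $v_{G_k}$ then form an $\mathbf{N}$-convergent sequence of uniformly bounded $\mathbf{E}_\alpha$, whose limit lies in $\mathrm{IF}(\mu,\nu)$ and has finite $\alpha$-mass by the lower semicontinuity above. Hence the infimum is finite.
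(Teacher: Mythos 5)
The paper does not prove this theorem at all: it is stated as Xia's result and deferred wholesale to \cite{xia2003optimal}, so there is no internal argument to compare against, and the right benchmark is Xia's original proof --- of which your sketch is essentially a reconstruction. The skeleton is right and rightly ordered: lower semicontinuity from the relaxed definition of $\mathbf{M}_\alpha$, stability of the divergence constraint under $\mathbf{N}$-convergence, a uniform bound on $\abs*{v_n}(K)$ as the only real obstacle, and the dyadic construction for finiteness, whose exponent you compute correctly ($2^{-k(1-d(1-\alpha))}$, summable exactly when $\alpha > 1-\frac{1}{d}$). Two caveats. First, the crux is hidden inside the black box you call \enquote{Xia's compactness lemma}, and what is inside is short enough to state: once cycles are removed, a flow with $\nabla\cdot v = \mu-\nu$, $\mu,\nu\in\prob(K)$, decomposes into curves from $\mu$ to $\nu$ carrying total mass $1$, so its $\haus^1$-density satisfies $\theta\le 1$ a.e.; hence $\theta\le\theta^\alpha$ pointwise and $\abs*{v}(K)=\int\theta\,\diff\haus^1\le\int\theta^\alpha\,\diff\haus^1=\mathbf{M}_\alpha(v)\le\Lambda$ (using also that finite relaxed $\alpha$-mass forces rectifiability, as cited from \cite{xia2004interior}); no dependence on $\mathrm{diam}(K)$ is needed. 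Note that this density fact for acyclic currents is itself an application of the full Smirnov decomposition into curves, not merely of the cycle/acyclic splitting. Second, Xia's actual argument avoids Smirnov's theorem entirely: since $\mathbf{M}_\alpha$ is defined by approximation by graphs, he removes cycles at the graph level, where it is elementary combinatorics (an acyclic graph in $\rmG(\mu,\nu)$ has every edge weight $\le 1$, so $\sum_e w(e)\abs*{e}\le\sum_e w(e)^\alpha\abs*{e}=\mathbf{E}_\alpha(G)$), and then diagonalizes over graph approximations. That diagonal argument is also what legitimizes your claim that the sequential relaxation is \enquote{automatically} lsc: weak-$\star$ convergence is metrizable only on mass-bounded sets, so the diagonal extraction is available only after the approximating graphs have been normalized to be acyclic. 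Your route therefore works, but in a full write-up the mass normalization must come first, and the heavy current-level machinery can be traded for elementary graph combinatorics.
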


\section{Equivalence between models}\label{equiv_sec}

In this section we show that the Lagrangian and Eulerian irrigation problems are equivalent, in the sense that they have same minimal value, and one can build minimizers of one problem from minimizers of the other. In this section we assume $1 - \frac{1}{d} < \alpha < 1$.

\subsection{From Lagrangian to Eulerian}

Recall that we have associated to any irrigation plan $\eta \in \mathrm{IP}(K)$ an intensity $i_\eta \in \calM^+(K)$ and a flow $v_\eta \in \calM^d(K)$. We will show that $v_\eta$ is an irrigation flow sending $\mu$ to $\nu$ and satisfying $\mathbf{M}_\alpha(v_\eta) \leq \mathbf{I}_\alpha(\eta)$ under some hypotheses.

\begin{prop}
If $\eta \in \mathrm{IP}(\mu,\nu)$ then $v_\eta \in \mathrm{IF}(\mu,\nu)$.
\end{prop}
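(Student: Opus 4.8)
The goal is to show two things: that $v_\eta$ is an irrigation flow (i.e.\ $v_\eta \in \calM^d(K)$ with $\nabla \cdot v_\eta \in \calM(K)$) and that its divergence is precisely $\mu - \nu$. The strategy is to compute the distributional divergence directly from the definition of $v_\eta$ and the irrigation constraints $(\pi_0)_\#\eta = \mu$, $(\pi_\infty)_\#\eta = \nu$.

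First I would test $\nabla \cdot v_\eta$ against a smooth function $\phi \in \calC^1(K)$. By definition of the divergence in the sense of distributions, $\langle \nabla \cdot v_\eta, \phi \rangle = -\langle v_\eta, \nabla \phi \rangle$, and by the definition of $v_\eta$ this equals
\[
-\int_\Gamma \int_\gamma \nabla\phi(x) \cdot \diff x \; \eta(\diff \gamma) = -\int_\Gamma \int_0^\infty \nabla\phi(\gamma(t)) \cdot \dot\gamma(t) \diff t \; \eta(\diff\gamma).
\]
The inner integral is the key computation: for a fixed curve $\gamma \in \Gamma^1$, the chain rule gives $\frac{\diff}{\diff t}\phi(\gamma(t)) = \nabla\phi(\gamma(t)) \cdot \dot\gamma(t)$ for a.e.\ $t$, so the fundamental theorem of calculus for Lipschitz functions yields $\int_0^\infty \nabla\phi(\gamma(t)) \cdot \dot\gamma(t)\diff t = \phi(\gamma(\infty)) - \phi(\gamma(0))$, using that $\gamma$ is eventually constant with limit $\gamma(\infty)$. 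Substituting back and splitting the integral,
\[
\langle \nabla \cdot v_\eta, \phi\rangle = -\int_\Gamma \big(\phi(\gamma(\infty)) - \phi(\gamma(0))\big)\eta(\diff\gamma) = \int_\Gamma \phi(\gamma(0))\eta(\diff\gamma) - \int_\Gamma \phi(\gamma(\infty))\eta(\diff\gamma).
\]
By the push-forward definitions $\pi_0(\gamma) = \gamma(0)$ and $\pi_\infty(\gamma) = \gamma(\infty)$ together with the constraints, these two integrals are exactly $\int_K \phi \diff\mu$ and $\int_K \phi \diff\nu$. Hence $\langle \nabla \cdot v_\eta, \phi\rangle = \langle \mu - \nu, \phi\rangle$ for all test functions, so $\nabla \cdot v_\eta = \mu - \nu$, which is a finite measure, and therefore $v_\eta \in \mathrm{IF}(\mu,\nu)$.

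The main technical point to justify carefully is the interchange of the $t$-integral and the $\eta$-integral (Fubini), and the integrability needed to make sense of $v_\eta$ as a vector measure in the first place: one must check that $\int_\Gamma \int_0^\infty |\nabla\phi(\gamma(t))||\dot\gamma(t)|\diff t\,\eta(\diff\gamma) \leq \norm{\nabla\phi}_\infty \int_\Gamma L(\gamma)\eta(\diff\gamma) = \norm{\nabla\phi}_\infty \mathbf{L}(\eta) < \infty$, which holds precisely because $\eta$ is an irrigation plan. This same bound shows $v_\eta$ is a well-defined finite vector measure with $\norm{v_\eta} \leq \mathbf{L}(\eta)$. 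The only mild subtlety is that $\gamma(\infty)$ is defined only on $\Gamma^1$, but since $\eta$ is concentrated on $\Gamma^1$ this causes no difficulty. I expect the whole argument to be routine once the fundamental theorem of calculus step is written cleanly.
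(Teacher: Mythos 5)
Your proof is correct and follows essentially the same route as the paper's: compute the distributional divergence against test functions, integrate by parts along each curve via the fundamental theorem of calculus, and conclude using the push-forward constraints $(\pi_0)_\#\eta = \mu$, $(\pi_\infty)_\#\eta = \nu$. One small imprecision worth fixing: a curve in $\Gamma^1$ need not be eventually constant (finite length does not force finite stopping time), but the step survives since $\int_0^\infty \abs*{\dot\gamma(t)}\diff t = L(\gamma) < \infty$ lets you apply the fundamental theorem on $[0,T]$ and pass to the limit $T \to \infty$ using dominated convergence and the continuity of $\phi$ at $\gamma(\infty) = \lim_{t\to\infty}\gamma(t)$.
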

\begin{proof}
Let us calculate the distributional divergence of $v_\eta$. For $\phi \in \calC(K)$, we have
\begin{align*}
\langle \nabla \cdot v_\eta, \phi \rangle = - \langle v_\eta, \nabla\phi \rangle &= -\int_\Gamma \int_0^\infty \nabla\phi(\gamma(t) \cdot \dot{\gamma}(t) \diff t\, \eta(\diff \gamma)\\
&= \int_{\Gamma^1} \left(\phi(\gamma(0)) - \phi(\gamma(\infty))\right) \eta(\diff \gamma)\\
&= \int_K \phi(x) \mu(\diff x) - \int_K \phi(x) \nu(\diff x),
\end{align*}
thus $\nabla \cdot v_\eta = \mu - \nu \in \calM(K)$, which implies that $v_\eta \in \mathrm{IF}(\mu,\nu)$.
\end{proof}

\begin{prop}\label{lag_to_eul}
If $\eta$ is an essentially simple and rectifiable irrigation plan, in particular if $\eta$ is optimal, we have
\[\mathbf{M}_\alpha(v_\eta) \leq \mathbf{I}_\alpha(\eta).\]
\end{prop}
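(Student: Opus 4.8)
The plan is to show that $v_\eta$ is a \emph{rectifiable} irrigation flow whose $\haus^1$-density is bounded in norm by the multiplicity $\theta_\eta$, and then to conclude via the energy formula. Since $\eta$ is rectifiable, Corollary \ref{rectif_equiv} together with Proposition \ref{rectif_dom} tells us that $\eta$ is concentrated on its $1$-rectifiable domain $D_\eta$, so that $\haus^1$-a.e. point $x \in D_\eta$ carries an approximate tangent line $\tang(x, D_\eta)$; I would fix a $\haus^1$-measurable orientation $\tau(x)$ of this line. Since $\int_\gamma \psi \cdot \diff x$ depends only on the oriented trajectory, $v_\eta$ is unchanged under speed normalization, so I may assume without loss of generality that $\eta$ is normalized, i.e. $\eta$-a.e. curve has unit speed.

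First I would rewrite the flow using the coarea formula, exactly as in the proof of Theorem \ref{thm:ener_form}. For a normalized curve $\gamma$ concentrated on $D_\eta$, the coarea formula gives
\[\int_\gamma \psi(x) \cdot \diff x = \int_{D_\eta} \indic_{x \in \gamma}\, \psi(x) \cdot \tau_\gamma(x)\, \haus^1(\diff x),\]
where $\tau_\gamma(x)$ is the unit velocity $\dot\gamma$ read at the (a.e. unique, by essential simplicity) time at which $\gamma$ visits $x$. The key geometric point is that, since $\gamma \subseteq D_\eta$ up to an $\haus^1$-null set and both are $1$-rectifiable, the tangent of $\gamma$ lies in $\tang(x, D_\eta)$ for $\haus^1$-a.e. $x \in \gamma$, so that $\tau_\gamma(x) = \epsilon_\gamma(x)\, \tau(x)$ for some sign $\epsilon_\gamma(x) \in \{-1, +1\}$.

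Then I would integrate against $\eta$ and swap the order of integration by Fubini--Tonelli, which is legitimate because $i_\eta = m_\eta \haus^1_{\mres D_\eta}$ is $\sigma$-finite (Proposition \ref{intens_rectif} and Remark \ref{intens_rectif_rem}) and dominates everything in play. This exhibits $v_\eta$ as the rectifiable flow
\[v_\eta = \theta_v\, \tau\, \haus^1_{\mres D_\eta}, \qquad \theta_v(x) \coloneqq \int_\Gamma \indic_{x \in \gamma}\, \epsilon_\gamma(x)\, \eta(\diff\gamma),\]
whose orientation $\tau(x)$ lies in $\tang(x,D_\eta)$, so that $\abs*{v_\eta(x)} = \abs*{\theta_v(x)}$ for $\haus^1$-a.e. $x$. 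The crucial estimate is the orientation cancellation bound, obtained simply from $\abs*{\epsilon_\gamma(x)} \leq 1$:
\[\abs*{v_\eta(x)} = \abs*{\theta_v(x)} \leq \int_\Gamma \indic_{x \in \gamma}\, \eta(\diff\gamma) = \theta_\eta(x) \quad \haus^1\text{-a.e.}\]
Combining this with $\alpha \in [0,1[$ and the energy formula for essentially simple rectifiable plans (Theorem \ref{thm:ener_form}), I conclude
\[\mathbf{M}_\alpha(v_\eta) = \int_K \abs*{v_\eta(x)}^\alpha \haus^1(\diff x) \leq \int_K \theta_\eta^\alpha(x)\, \haus^1(\diff x) = \mathbf{E}_\alpha(\eta) = \mathbf{I}_\alpha(\eta).\]

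The main obstacle is making rigorous the identification $\tau_\gamma = \epsilon_\gamma \tau$ together with the joint measurability of $(x,\gamma) \mapsto \epsilon_\gamma(x)$ required to apply Fubini--Tonelli; this rests on the standard fact that the tangent to a rectifiable curve contained in a rectifiable set agrees $\haus^1$-a.e. with the tangent to the set, supplemented by a measurable-selection argument for the sign. The \enquote{in particular if $\eta$ is optimal} clause is then immediate, since an optimal $\eta$ of finite $\alpha$-cost is rectifiable by Theorem \ref{rectif_thm} and simple (hence essentially simple) by the preceding proposition.
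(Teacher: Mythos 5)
Your proof is correct, and it takes a more self-contained (and heavier) route than the paper's. The paper's argument is three lines: for \emph{any} irrigation plan one has the total-variation bound $\abs*{v_\eta} \leq i_\eta$ (immediate from the definitions of $v_\eta$ and $i_\eta$), and for a rectifiable, essentially simple plan Remark \ref{intens_rectif_rem} gives $i_\eta = m_\eta \haus^1 = \theta_\eta \haus^1$; hence $v_\eta$ has an $\haus^1$-density dominated by $\theta_\eta$, and the energy formula of Theorem \ref{thm:ener_form} yields $\mathbf{M}_\alpha(v_\eta) = \int_K \abs*{v_\eta}^\alpha \diff\haus^1 \leq \int_K \theta_\eta^\alpha \diff\haus^1 = \mathbf{E}_\alpha(\eta) = \mathbf{I}_\alpha(\eta)$. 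The delicate point in that chain is the first equality: the paper's primary definition of $\mathbf{M}_\alpha$ requires $v_\eta$ to be rectifiable in the strict sense (orientation lying $\haus^1$-a.e. in the approximate tangent line), which the total-variation bound alone does not provide; the paper covers this by invoking the stated coincidence of its definition with Xia's relaxed one \cite{xia2004interior}, under which $\mathbf{M}_\alpha(v) = \int \abs*{v}^\alpha \diff\haus^1$ as soon as $v$ has an $\haus^1$-density. Your coarea--Fubini representation $v_\eta = \theta_v \tau \haus^1_{\mres D_\eta}$, with the signs $\epsilon_\gamma$ and the tangent-locality fact, proves exactly this missing strict rectifiability of $v_\eta$ directly from the Lagrangian structure, so your argument needs only the paper's own definition of $\mathbf{M}_\alpha$ and no external relaxation result; that is what the extra work buys. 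Conversely, note that your cancellation bound $\abs*{\theta_v} \leq \theta_\eta$ is the paper's estimate $\abs*{v_\eta} \leq i_\eta = \theta_\eta \haus^1$ in disguise, so if brevity mattered you could keep the one-line total-variation bound for the density estimate and reserve the tangency argument solely for rectifiability. The two technical points you flag (locality of approximate tangents of a rectifiable curve inside a rectifiable set, and Borel selection of the sign, e.g.\ via first hitting times) are standard and genuinely needed, so flagging them is appropriate rather than a gap.
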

\begin{proof}
By Remark \ref{intens_rectif_rem} we know that $i_\eta = m_\eta \haus^1 = \theta_\eta \haus^1$. Since $\abs*{v_\eta} \leq i_\eta = \theta_\eta \haus^1$, $v_\eta$ has an $\haus^1$-density which is less than $\theta_\eta$. Using the energy formula, we have
\[\mathbf{I}_\alpha(\eta) = \mathbf{E}_\alpha(\eta) = \int_K \theta_\eta^\alpha(x) \haus^1(dx) \geq \int_K \abs*{v_\eta(x)}^\alpha \haus^1(\diff x) = \mathbf{M}_\alpha(v_\eta).\]
\end{proof}

We have therefore proven that we have
\[\inf_{\mathrm{IF}(\mu,\nu)} \quad \mathbf{M}_\alpha\quad  \leq \quad \inf_{\mathrm{IP}(\mu,\nu)} \quad \mathbf{I}_\alpha,\]
and that if $\eta$ is optimal, $v_\eta$ is a good optimal candidate for the Eulerian problem \eqref{eul_pb}.

\subsection{From Eulerian to Lagrangian}

Given an irrigation flow $v \in \mathrm{IF}(\mu,\nu)$ of finite cost $\mathbf{M}_\alpha$, we would like to build an irrigation plan $\eta \in \mathrm{IP}(\mu,\nu)$ such that $v = v_\eta$ (and whose cost is less than $v$). This is not true in general but a Smirnov decomposition gives the result if $v$ is optimal for \eqref{eul_pb}.

\subsubsection*{Cycle} If $v \in \mathrm{IF}(K)$, we say that $w \in \mathrm{IF}(K)$ is a \emph{cycle} of $v$ if $\abs*{v} = \abs*{w} + \abs*{v-w}$ and $\nabla \cdot  w = 0$. It is easy to check that if $v$ is rectifiable then $w$ and $v-w$ are also rectifiable. The following Smirnov decomposition is proved by Santambrogio via a Dacorogna-Moser approach in \cite{santambrogio2014dacorogna}.

\begin{thm}[Irrigation flow decomposition]
Given an irrigation flow $v \in \mathrm{IF}(\mu,\nu)$, there is an irrigation plan $\eta \in \mathrm{IP}(\mu,\nu)$ and a cycle $w \in \mathrm{IF}(K)$ satisfying
\begin{enumerate}[label=(\roman*)]
\item $v = v_\eta + w$,
\item $i_\eta \leq \abs*{v}$.
\end{enumerate}
\end{thm}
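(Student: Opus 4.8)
The plan is to reduce the statement to the Dacorogna--Moser construction established in \cite{santambrogio2014dacorogna}, which I take as a black box, and then verify that its output fits the Lagrangian framework of this paper. First I would recall what that reference actually produces: given a normal $1$-current $v$ with $\nabla\cdot v = \mu - \nu$ and finite mass, the Dacorogna--Moser approach yields a measure $\eta$ on curves and a divergence-free remainder $w$ such that the superposition of the curves reconstructs $v$ up to $w$. The task is therefore bookkeeping: matching the quantities produced there with the objects $v_\eta$, $i_\eta$, and $\mathrm{IP}(\mu,\nu)$ defined in Section \ref{lag_sec}.

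The key steps, in order, are as follows. First I would invoke the Smirnov/Dacorogna--Moser decomposition to obtain a probability measure $\eta$ on $\Gamma^1(K)$ together with a cycle $w$. Second, I would check that $\eta$ is genuinely an \emph{irrigation plan}, that is $\mathbf{L}(\eta) < \infty$; this follows because the curves carry a total length controlled by the mass of $v$, which is finite since $\mathbf{M}_\alpha(v) < \infty$ forces $v$ to be rectifiable with $\haus^1$-density in $L^1$. Third, I would identify the marginals: the construction sends the mass of each curve to its endpoints consistently with $\nabla\cdot v = \mu - \nu$, so $(\pi_0)_\#\eta = \mu$ and $(\pi_\infty)_\#\eta = \nu$, giving $\eta \in \mathrm{IP}(\mu,\nu)$. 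Fourth, I would verify the two displayed conclusions. The identity $v = v_\eta + w$ is exactly the statement that integrating the tangent directions of the curves against $\eta$ reproduces $v$ minus the divergence-free part, which is the definition of $v_\eta$ via $\langle v_\eta,\psi\rangle = \int_\Gamma \int_\gamma \psi(x)\cdot \diff x\,\eta(\diff\gamma)$. The inequality $i_\eta \leq \abs*{v}$ is the mass-additivity built into the decomposition: the superposition does not create circulation beyond what $v$ carries, so the scalar intensity $i_\eta$, defined by $\langle i_\eta,\phi\rangle = \int_\Gamma\int_\gamma \phi\,\abs*{\diff x}\,\eta(\diff\gamma)$, is dominated by the total variation measure $\abs*{v}$.

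Finally I would confirm the cycle property of $w$, namely $\abs*{v} = \abs*{v_\eta} + \abs*{w}$ together with $\nabla\cdot w = 0$; the divergence condition is immediate since $\nabla\cdot v_\eta = \mu-\nu = \nabla\cdot v$, and the mass-splitting equality is precisely the ``no cancellation'' guarantee of the Smirnov-type decomposition, which is where the strength of the Dacorogna--Moser construction is used.

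I expect the main obstacle to be the translation between the current-theoretic language of \cite{santambrogio2014dacorogna} and the measure-on-curves language used here. In particular, one must ensure that the curves produced are parameterized so as to live in $\Gamma(K)$ (Lipschitz, valued in the compact $K$, with well-defined endpoints $\gamma(\infty)$), and that the measure $\eta$ is concentrated on $\Gamma^1(K)$ with finite $\mathbf{L}$; these are exactly the integrability and rectifiability hypotheses guaranteeing that $\pi_\infty$ is well-defined $\eta$-a.e. The delicate point is the mass inequality $i_\eta \leq \abs*{v}$, which is stronger than the flow identity alone and genuinely relies on the optimality of the decomposition rather than on a generic superposition principle.
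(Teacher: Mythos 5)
Your overall strategy is the same as the paper's: the paper gives no proof of this theorem at all, but imports it directly from Santambrogio's Dacorogna--Moser construction \cite{santambrogio2014dacorogna}, so treating that reference as a black box and translating its output into the objects $v_\eta$, $i_\eta$, $\mathrm{IP}(\mu,\nu)$ of Section \ref{lag_sec} is exactly what is intended.

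That said, your step 2 contains a genuine error of logic. To prove $\mathbf{L}(\eta) < \infty$ you invoke ``$\mathbf{M}_\alpha(v) < \infty$ forces $v$ to be rectifiable with $\haus^1$-density in $L^1$''. But finite $\alpha$-mass is \emph{not} a hypothesis of this theorem: it is stated for an arbitrary irrigation flow $v \in \mathrm{IF}(\mu,\nu)$, that is, any $v \in \calM^d(K)$ with $\nabla \cdot v \in \calM(K)$, which need not be rectifiable. (Finiteness of $\mathbf{M}_\alpha$ only becomes available in the subsequent corollary, where the theorem is applied to an optimal $v$; the theorem itself must hold without it, and it does.) The correct justification is simpler and hypothesis-free: an irrigation flow has finite total variation simply because it is a vector measure, and once item (ii) is in hand,
\[\mathbf{L}(\eta) = \int_\Gamma L(\gamma)\,\eta(\diff\gamma) = i_\eta(K) \leq \abs*{v}(K) < \infty,\]
which is exactly condition \eqref{fin_len}. (A priori $i_\eta$ makes sense as a possibly infinite positive measure for any probability on curves; the domination by $\abs*{v}$ is what makes it, and hence $\mathbf{L}(\eta)$, finite.) A smaller correction: in your closing paragraph you say the inequality $i_\eta \leq \abs*{v}$ ``genuinely relies on the optimality of the decomposition''. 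No optimality is involved anywhere in this theorem; the inequality is the no-cancellation property built into the Smirnov/Dacorogna--Moser construction itself, namely that the curves it produces follow the direction of $v$ at $\abs*{v}$-a.e.\ point, so their superposed arclength never exceeds the mass of $v$. Optimality of $v$ enters only afterwards, in the corollary, to kill the cycle $w$.
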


\begin{cor}
If $v$ is an optimal irrigation flow in $\mathrm{IF}(\mu,\nu)$, there is an irrigation plan $\eta \in \mathrm{IP}(\mu,\nu)$ such that 
\begin{enumerate}[label=(\roman*)]
\item\label{smirnov_dec1} $v = v_\eta$,
\item\label{smirnov_dec2} $\abs*{v_\eta} = i_\eta$.
\end{enumerate}
\end{cor}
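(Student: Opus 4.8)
The plan is to apply the Irrigation flow decomposition theorem to $v$ and then to show that the cycle it produces must vanish, which will immediately give both conclusions. First I would invoke the decomposition to obtain an irrigation plan $\eta \in \mathrm{IP}(\mu,\nu)$ and a cycle $w \in \mathrm{IF}(K)$ with $v = v_\eta + w$ and $i_\eta \leq \abs*{v}$. Since $w$ is a cycle we have $\nabla \cdot w = 0$, so $\nabla \cdot v_\eta = \nabla \cdot (v - w) = \mu - \nu$ and $v_\eta$ is itself an admissible competitor in $\mathrm{IF}(\mu,\nu)$. Because $v$ is optimal with $1 - \frac{1}{d} < \alpha < 1$, its $\alpha$-mass is finite, hence $v$ is rectifiable; and as noted in the definition of a cycle, rectifiability of $v$ forces $w$ and $v_\eta = v - w$ to be rectifiable as well, so all the total variation measures involved admit $\haus^1$-densities concentrated on a common rectifiable set.

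The key step is the cost comparison. The cycle condition reads $\abs*{v} = \abs*{w} + \abs*{v - w} = \abs*{w} + \abs*{v_\eta}$ as measures, so at the level of $\haus^1$-densities one has $\abs*{v_\eta(x)} \leq \abs*{v(x)}$ for $\haus^1$-a.e.\ $x$. Since $t \mapsto t^\alpha$ is nondecreasing, integrating yields
\[
\mathbf{M}_\alpha(v_\eta) = \int_K \abs*{v_\eta(x)}^\alpha \haus^1(\diff x) \leq \int_K \abs*{v(x)}^\alpha \haus^1(\diff x) = \mathbf{M}_\alpha(v).
\]
As $v$ is optimal and $v_\eta$ is admissible, the reverse inequality $\mathbf{M}_\alpha(v) \leq \mathbf{M}_\alpha(v_\eta)$ also holds, whence equality throughout.

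From $\int_K \bigl(\abs*{v(x)}^\alpha - \abs*{v_\eta(x)}^\alpha\bigr) \haus^1(\diff x) = 0$ with a nonnegative integrand, the integrand vanishes $\haus^1$-a.e.; and since $\alpha > 0$ makes $t \mapsto t^\alpha$ strictly increasing, this forces $\abs*{v_\eta(x)} = \abs*{v(x)}$ $\haus^1$-a.e. Going back to the cycle identity, $\abs*{w} = 0$, so $w = 0$ and $v = v_\eta$, which is \ref{smirnov_dec1}. For \ref{smirnov_dec2}, I combine the general bound $\abs*{v_\eta} \leq i_\eta$ (already used in the proof of Proposition \ref{lag_to_eul}) with the decomposition estimate $i_\eta \leq \abs*{v} = \abs*{v_\eta}$ to conclude $i_\eta = \abs*{v_\eta}$.

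The only genuinely delicate point is the passage from the measure identity $\abs*{v} = \abs*{w} + \abs*{v_\eta}$ to the pointwise density comparison: this requires knowing that all three flows are rectifiable and concentrated on one $\sigma$-finite (indeed rectifiable) set, so that their $\haus^1$-densities are well defined and the identity holds $\haus^1$-a.e. Everything else—monotonicity and strict monotonicity of $t^\alpha$, admissibility of $v_\eta$, and the final chain of inequalities—is routine once this is in place.
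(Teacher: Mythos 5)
Your proposal is correct and follows essentially the same route as the paper: apply the decomposition theorem, use optimality of $v$ together with the cycle identity $\abs*{v} = \abs*{v_\eta} + \abs*{w}$ and strict monotonicity of $t \mapsto t^\alpha$ to force $w = 0$, then sandwich $\abs*{v_\eta} \leq i_\eta \leq \abs*{v} = \abs*{v_\eta}$. The only difference is cosmetic (you compare $\mathbf{M}_\alpha(v_\eta)$ and $\mathbf{M}_\alpha(v)$ first and deduce $\abs*{w}=0$ afterwards, while the paper substitutes the cycle identity into the optimality inequality directly), and your explicit remarks on admissibility of $v_\eta$ and on rectifiability justifying the passage to $\haus^1$-densities are points the paper leaves implicit.
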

\begin{proof}
Let us take $v_\eta, w$ as in the previous theorem. Since $\mathbf{M}_\alpha(v) < \infty$, $v$ and $v_\eta$ are rectifiable, and by optimality of $v$ one has
\[\int_K \abs*{v_\eta(x)}^\alpha \haus^1(\diff x) = M_\alpha(v_\eta) \geq M_\alpha(v) = \int_K (\abs*{v_\eta(x)} + \abs*{w(x)})^\alpha \haus^1(\diff x),\]
thus we must have $\abs*{w(x)} = 0$ $\haus^1$-a.e., which means $w = 0$, thus $v = v_\eta$ and \ref{smirnov_dec1} holds. This implies $\abs*{v_\eta} \leq i_\eta \leq \abs*{v} = \abs*{v_\eta}$ and thus we have the equality $\abs*{v_\eta} = i_\eta$ wanted in \ref{smirnov_dec2}.
\end{proof}

\begin{prop}\label{eul_to_lag}
If $v$ is an optimal irrigation flow in $\mathrm{IF}(\mu,\nu)$, one can find an irrigation plan $\eta \in \mathrm{IP}(\mu,\nu)$ such that
\[\mathbf{I}_\alpha(\eta) \leq \mathbf{M}_\alpha(v).\]
\end{prop}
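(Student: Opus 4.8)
The plan is to feed the preceding corollary into a simple replacement. First I would invoke that corollary to obtain an irrigation plan $\eta_0 \in \mathrm{IP}(\mu,\nu)$ with $v = v_{\eta_0}$ and $\abs*{v_{\eta_0}} = i_{\eta_0}$. Since $v$ has finite $\alpha$-mass it is rectifiable, so $\abs*{v} = \abs*{v_{\eta_0}} = i_{\eta_0}$ is concentrated on a $1$-rectifiable set; hence $\eta_0$ is rectifiable and, by Remark \ref{intens_rectif_rem}, $i_{\eta_0} = m_{\eta_0}\haus^1$. Comparing $\haus^1$-densities in the identity $\abs*{v} = i_{\eta_0} = m_{\eta_0}\haus^1$ gives $\abs*{v(x)} = m_{\eta_0}(x)$ for $\haus^1$-a.e.\ $x$, whence
\[\mathbf{M}_\alpha(v) = \int_K \abs*{v(x)}^\alpha \haus^1(\diff x) = \int_K m_{\eta_0}(x)^\alpha \haus^1(\diff x).\]

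The naive idea would be to take $\eta = \eta_0$ directly, but this fails, and understanding why is the crux of the argument. By the energy formula (Theorem \ref{thm:ener_form}) we only have $\mathbf{I}_\alpha(\eta_0) = \bar{\mathbf{E}}_\alpha(\eta_0) = \int_K \theta_{\eta_0}^{\alpha-1} m_{\eta_0}\, \haus^1(\diff x)$, and since $\theta_{\eta_0} \leq m_{\eta_0}$ with $\alpha - 1 < 0$ this gives $\mathbf{I}_\alpha(\eta_0) \geq \int_K m_{\eta_0}^\alpha\, \haus^1 = \mathbf{M}_\alpha(v)$, which is the wrong direction. The plan produced by the decomposition may carry genuine multiplicity (curves overlapping themselves or one another), so that $m_{\eta_0} > \theta_{\eta_0}$, and this excess multiplicity has to be discarded.

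I would therefore instead take $\tilde\eta$ to be a simple replacement of $\eta_0$, as furnished by Lemma \ref{simple_rep}. Then $\tilde\eta \in \mathrm{IP}(\mu,\nu)$ is simple, hence essentially simple, and since $\eta_0$ is rectifiable, part \ref{simple_rep2} of Lemma \ref{simple_rep} guarantees that $\tilde\eta$ is rectifiable and that $\theta_{\tilde\eta} \leq \theta_{\eta_0}$ holds $\haus^1$-a.e. Applying the energy formula in its essentially simple form (Theorem \ref{thm:ener_form}) to $\tilde\eta$, and then chaining the $\haus^1$-a.e.\ inequalities $\theta_{\tilde\eta} \leq \theta_{\eta_0} \leq m_{\eta_0} = \abs*{v}$ together with the monotonicity of $t \mapsto t^\alpha$ (recall $\alpha > 0$ here), I get
\[\mathbf{I}_\alpha(\tilde\eta) = \int_K \theta_{\tilde\eta}(x)^\alpha\, \haus^1(\diff x) \leq \int_K m_{\eta_0}(x)^\alpha\, \haus^1(\diff x) = \mathbf{M}_\alpha(v),\]
so $\tilde\eta$ is the desired irrigation plan. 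The only genuine subtlety is the one highlighted above: the simple replacement is precisely the device that converts the $m_{\eta_0}$-weighted full energy $\bar{\mathbf{E}}_\alpha$ (which overshoots $\mathbf{M}_\alpha(v)$) into the $\theta$-weighted energy $\mathbf{E}_\alpha$, whose density is dominated by $\abs*{v}$ and hence controlled by $\mathbf{M}_\alpha(v)$.
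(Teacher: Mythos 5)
Your proposal is correct and follows exactly the paper's own argument: invoke the corollary to the Smirnov decomposition to get $\eta_0$ with $v=v_{\eta_0}$ and $i_{\eta_0}=\abs*{v}$, deduce rectifiability and $\abs*{v}=m_{\eta_0}\haus^1$, then pass to a simple replacement $\tilde\eta$ (Lemma \ref{simple_rep}) and conclude via the energy formula with $\theta_{\tilde\eta}\leq\theta_{\eta_0}\leq m_{\eta_0}$. Your added explanation of why $\eta_0$ itself fails (the $\bar{\mathbf{E}}_\alpha$ inequality points the wrong way) is a nice elaboration of the paper's remark that $\eta_0$ "is not necessarily essentially simple," but it is the same proof.
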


\begin{proof}
Take $\eta$ as in the previous corollary. Since $\mathbf{M}_\alpha(v) < \infty$, $v$ is rectifiable and $i_\eta = \abs*{v}$ is concentrated on a rectifiable set, which means by definition that $\eta$ is rectifiable. As a consequence $\abs*{v} = i_\eta = m_\eta \haus^1$ and we have
\begin{align*}
\mathbf{M}_\alpha(v) &= \int_K \abs*{v_\eta(x)}^\alpha \haus^1(\diff x) = \int_K m_\eta^\alpha(x) \haus^1(\diff x),\shortintertext{while}
\mathbf{I}_\alpha(\eta) &= \bar{\mathbf{E}}_\alpha(\eta) = \int_K \theta_\eta^{\alpha -1}(x) m_\eta(x) \haus^1(\diff x).
\end{align*}
We would like $\mathbf{I}_\alpha(\eta) \leq \mathbf{M}_\alpha(v)$, which is a priori not necessarily the case for the $\eta$ we have constructed, since it is not necessarily essentially simple. Instead, take a simple replacement $\tilde{\eta} \in \mathrm{IP}(\mu,\nu)$ satisfying $m_{\tilde{\eta}} = \theta_{\tilde{\eta}} \leq \theta_\eta \leq m_\eta$. Then we get
\[
\mathbf{I}_\alpha(\tilde{\eta}) = \int_K \theta_{\tilde{\eta}}^\alpha \haus^1(\diff x) \leq \int_K m_\eta^\alpha(x) \haus^1(\diff x) = \mathbf{M}_\alpha(v)\]
which yields the result.
\end{proof}
\begin{rem}
Since the minima in the Eulerian and Lagrangian problems are actually the same as we shall see in Theorem \ref{eq_thm}, the previous inequality is an equality, which implies that $\theta_\eta = \theta_{\tilde{\eta}} = m_\eta$ $\haus^1$-a.e. thus $\eta$ was actually optimal hence simple.
\end{rem}

\subsection{The equivalence theorem}

We are now able to formulate the equivalence between the Lagrangian and Eulerian models.

\begin{thm}[Equivalence theorem]\label{eq_thm}
If $1 - \frac{1}{d} < \alpha < 1$ and $\mu,\nu \in \prob(K)$, the Eulerian problem \eqref{eul_pb} and the Lagrangian problem \eqref{lag_pb} are equivalent in the following sense:
\begin{enumerate}[label=(\roman*)]
\item\label{equiv1} the minima are the same
\[\min_{\eta \in \mathrm{IP}(\mu,\nu)} \mathbf{I}_\alpha(\eta) = \min_{v\in \mathrm{IF}(\mu,\nu)} \mathbf{M}_\alpha(v),\]
\item\label{equiv2} if $v$ is optimal in $\mathrm{IF}(\mu,\nu)$, it can be represented by an optimal irrigation plan, \ie $v = v_\eta$ for some optimal $\eta \in \mathrm{IP}(\mu,\nu)$,
\item\label{equiv3} if $\eta$ is optimal in $\mathrm{IP}(\mu,\nu)$, then $v_\eta$ is optimal in $\mathrm{IF}(\mu,\nu)$ and $i_\eta = \abs*{v_\eta}$.
\end{enumerate}
\end{thm}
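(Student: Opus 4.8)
The plan is to obtain the three items by combining the two one-sided comparisons already proved, Proposition \ref{lag_to_eul} (which bounds $\mathbf{M}_\alpha(v_\eta)$ by $\mathbf{I}_\alpha(\eta)$ for essentially simple rectifiable $\eta$) and Proposition \ref{eul_to_lag} (which, through the Smirnov decomposition, produces from an optimal flow a plan of no greater cost), together with the existence of minimizers on both sides and the fact that optimal irrigation plans are simple. First I would settle \ref{equiv1}. Since $1 - \frac{1}{d} < \alpha < 1$, both problems admit minimizers of finite cost; pick $\eta_\star$ optimal for \eqref{lag_pb} and $v_\star$ optimal for \eqref{eul_pb}. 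Being optimal, $\eta_\star$ is simple, hence essentially simple, and rectifiable by Theorem \ref{rectif_thm}, so Proposition \ref{lag_to_eul} gives $\min \mathbf{M}_\alpha \leq \mathbf{M}_\alpha(v_{\eta_\star}) \leq \mathbf{I}_\alpha(\eta_\star) = \min \mathbf{I}_\alpha$. In the reverse direction, Proposition \ref{eul_to_lag} applied to $v_\star$ yields a plan $\eta$ with $\mathbf{I}_\alpha(\eta) \leq \mathbf{M}_\alpha(v_\star) = \min \mathbf{M}_\alpha$, so that $\min \mathbf{I}_\alpha \leq \min \mathbf{M}_\alpha$. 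The two bounds coincide, proving \ref{equiv1}.

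For \ref{equiv3}, let $\eta$ be optimal for \eqref{lag_pb}; it is essentially simple and rectifiable as above. Proposition \ref{lag_to_eul} gives $\mathbf{M}_\alpha(v_\eta) \leq \mathbf{I}_\alpha(\eta) = \min \mathbf{I}_\alpha = \min \mathbf{M}_\alpha$, so $v_\eta$ is optimal. To get $i_\eta = \abs*{v_\eta}$ I would reopen the proof of Proposition \ref{lag_to_eul}: there $\abs*{v_\eta} \leq i_\eta = \theta_\eta \haus^1$ produces $\int_K \abs*{v_\eta}^\alpha \diff \haus^1 \leq \int_K \theta_\eta^\alpha \diff \haus^1$, and since both sides now equal the common minimum, the nonnegative integrand $\theta_\eta^\alpha - \abs*{v_\eta}^\alpha$ must vanish $\haus^1$-a.e.; hence $\abs*{v_\eta} = \theta_\eta$ $\haus^1$-a.e., that is $\abs*{v_\eta} = \theta_\eta \haus^1 = i_\eta$.

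For \ref{equiv2}, let $v$ be optimal for \eqref{eul_pb}. The corollary to the irrigation flow decomposition furnishes $\eta \in \mathrm{IP}(\mu,\nu)$ with $v = v_\eta$ and $i_\eta = \abs*{v}$; since $\abs*{v}$ is concentrated on a rectifiable set, $\eta$ is rectifiable and $i_\eta = m_\eta \haus^1$, so $m_\eta = \abs*{v}$ as densities. It remains to show this $\eta$ is optimal. Taking a simple replacement $\tilde{\eta}$, one has $\theta_{\tilde\eta} = m_{\tilde\eta} \leq \theta_\eta \leq m_\eta$ and $\mathbf{I}_\alpha(\tilde\eta) = \int_K \theta_{\tilde\eta}^\alpha \diff \haus^1 \leq \int_K m_\eta^\alpha \diff \haus^1 = \mathbf{M}_\alpha(v) = \min \mathbf{I}_\alpha$. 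Since $\mathbf{I}_\alpha(\tilde\eta) \geq \min \mathbf{I}_\alpha$ there is equality, forcing $\theta_{\tilde\eta} = m_\eta$ $\haus^1$-a.e., and the sandwich then gives $\theta_\eta = m_\eta$; thus $\eta$ is essentially simple and $\mathbf{I}_\alpha(\eta) = \bar{\mathbf{E}}_\alpha(\eta) = \int_K \theta_\eta^{\alpha-1} m_\eta \diff \haus^1 = \int_K m_\eta^\alpha \diff \haus^1 = \min \mathbf{I}_\alpha$, so $\eta$ is the desired optimal plan representing $v$.

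The serious analytic work—the Smirnov decomposition behind Proposition \ref{eul_to_lag}, the compactness and lower semicontinuity behind existence, and the energy formula—is all imported, so the only delicate point left is the equality-case bookkeeping. The crux is that the plan delivered by the decomposition need not be essentially simple a priori: one cannot read off its optimality from $v = v_\eta$ alone, but must pass through a simple replacement and use the strict monotonicity of $t \mapsto t^\alpha$ on $[0,\infty[$ (valid since $0 < \alpha < 1$) to force the densities $\theta_\eta$, $m_\eta$ and $\abs*{v}$ to coincide $\haus^1$-a.e. Keeping track of which plan is essentially simple at each stage is the main thing to get right.
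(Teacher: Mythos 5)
Your proposal is correct and takes essentially the same route as the paper: both deduce the theorem by combining Propositions \ref{lag_to_eul} and \ref{eul_to_lag} with the existence of minimizers on each side, and both extract $i_\eta = \abs*{v_\eta}$ and the optimality of the decomposed plan from the equality case in the resulting chain of inequalities. The only difference is one of exposition: you spell out the equality-case bookkeeping, in particular for item \ref{equiv2}, which the paper compresses into its two-line proof and the remark following Proposition \ref{eul_to_lag}.
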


\begin{proof}
It all follows from Proposition \ref{lag_to_eul} and Proposition \ref{eul_to_lag}. The equality $i_\eta = \abs*{v_\eta}$ comes from
\[\mathbf{I}_\alpha(\eta) = \int_K i_\eta^\alpha(x) \haus^1(\diff x) \geq \int_K \abs*{v_\eta(x)}^\alpha \haus^1(\diff x) = \mathbf{M}_\alpha(v_\eta),\]
since we have equality everywhere by optimality of $v_\eta$ and $\eta$.
\end{proof}

\begin{rem}
Notice in particular that the equality $i_\eta = \abs*{v_\eta}$ implies that curves of $\eta$ have the same tangent vectors when they coincide. To be more precise there is an $\haus^1$-a.e. defined function $\tau : D_\eta \mapsto \Sp^{d-1}$ such that for $\eta$-a.e. $\gamma \in \Gamma$, for $\haus^1$-a.e. $x \in \gamma$, $\dot{\gamma}(t) = \abs*{\dot{\gamma}(t)} \tau(x)$ whenever $\gamma(t) = x$.
\end{rem}

\end{document}